\newcommand {\R} {\mathbb{R}}
\newcommand {\Z} {\mathbb{Z}}
\newcommand {\T} {\mathbb{T}}
\newcommand {\N} {\mathbb{N}}
\newcommand {\p} {\partial}
\newcommand {\dt} {\partial_t}
\newcommand{\calD}{\mathcal{D}}
\newcommand{\calG}{\mathcal{G}}
\newcommand{\calI}{\mathcal{I}}
\newcommand{\calR}{\mathcal{R}}
\newcommand{\eps}{\varepsilon}
\DeclareMathOperator{\dive }{div}
\theoremstyle{plain}
\newtheorem{thm}{Theorem}
\newtheorem*{thm*}{Theorem}
\newtheorem{prop}{Proposition}
\newtheorem*{cor*}{Corollary}
\theoremstyle{plain}
\newtheorem{theorem}{Theorem}
\newtheorem{lemma}[theorem]{Lemma}
\theoremstyle{definition}
\newtheorem{remark}[theorem]{Remark}
\date{\today}
\begin{document}
\title[Non-resistive MHD]{Large norm inflation of the current in the viscous, non-resistive magnetohydrodynamics equations}
\begin{abstract}
	We consider the ideally conducting, viscous
	magnetohydrodynamics (MHD) equations in two
	dimensions. Specifically, we study the nonlinear dynamics near a combination of Couette flow and a constant magnetic field in a periodic infinite channel.
	In contrast to the Navier-Stokes equations this system is shown to exhibit
	algebraic instability and large norm inflation of the magnetic current on
	non-perturbative time scales. 
\end{abstract}
\author{Michele Dolce}
\author{Niklas Knobel}
\author{Christian Zillinger}

\keywords{Magnetohydrodynamics, non-resistive, instability, Gevrey regularity}
\subjclass[2020]{76W05, 76E25, 76E30, 76E05}
\maketitle

\setcounter{tocdepth}{1}
\tableofcontents

\section{Introduction}
\label{sec:introduction}
In this article, we study the long-time behavior of the non-resistive viscous
magnetohydrodynamics (MHD) equations in a two-dimensional infinite periodic
channel:
\begin{align}
	\label{eq:nrMHD}
	\begin{split}
		\p_t V + V\cdot\nabla V + \nabla \Pi & =\nu \Delta V+  \dive (\nabla^\perp \Phi \otimes   \nabla^\perp \Phi ), \\
		\p_t \Phi +  V\cdot\nabla \Phi       & =0,                                                                    \\
		\text{div}(V)                   & =0,                                                                    \\
		(t,x,y)                         & \in \R_{+}\times \T\times \R =: \Omega.
	\end{split}
\end{align}
These equations model the evolution of an electrically conductive fluid with
velocity $V:\Omega\rightarrow \R^2$ interacting with a magnetic field
$B:=\nabla^{\perp} \Phi: \Omega \rightarrow \R^2$. The function
$\Phi:\Omega\rightarrow \R$ denotes the magnetic potential and $\Pi:\Omega
\rightarrow \R$ denotes the pressure. Common settings described by these
equations include the modeling of plasmas or molten metals in industrial
applications \cite{davidson2016introduction}. The MHD equations are a subject of
highly active research and include several distinct partial dissipation regimes.
More precisely, these equations include both dissipation in the fluid component
with a viscosity parameter $\nu > 0$, as well as resistivity of magnetic
component $\mu \geq 0$. The well-posedness of the non-resistive/ideal conductor
case, $\mu=0$, studied in this article is a challenging open problem in the
field \cite{lin2012some}. In contrast to the two-dimensional Euler or
Navier-Stokes equations, it is not known whether the equations can exhibit
finite time blow-up. In particular, we mention recent results on local
well-posedness \cite{ji2019stability}, the stabilizing effect of large, constant
magnetic fields
\cite{lin2015global,ren2014global,zhao2023asymptotic,wu2021global,wei2017global}
(for small velocities) and improved stability estimates for small velocities
\cite{cobb2023elsasser,cobb2021symmetry}.

In contrast, this article focuses on the interaction of mixing by a large
(shear) velocity and the (de)stabilizing role of the magnetic field. To this
end, we study the nonlinear stability problem near the stationary solution
\begin{align}
	\label{eq:stationarysol}
	V(x,y,t)=(y,0), \ \Phi(x,y,t)=-\alpha y, \ \nabla^{\perp} \Phi(x,y,t)=(\alpha, 0),
\end{align}
where $\alpha \in \R$ denotes the size of the constant magnetic field.

In view of the shear flow in the stationary solution \eqref{eq:stationarysol},
it is convenient to work with the following perturbative unknowns in coordinates
moving with the flow
\begin{align*}
	v(x,y,t)     & = V(x+yt,y,t)- y e_1          \\
  w(x,y,t)    &= (\nabla^\perp \cdot V)(x+yt,y,t)+1 \\
	\phi (x,y,t) & = \Phi (x+yt,y,t) + \alpha y,\\
  b(x,y,t)&=(\nabla^\perp_t \phi)(x,y,t)\\
  \Delta_t(\psi,\phi)(x,y,t)&=(w,j)(x,y,t), \qquad
\end{align*}
where $\Delta_t=\p_{x}^2+(\p_y-t\p_x)^2, \nabla_t=(\p_x,\p_y-t\p_x)$ denote the
differential operators in coordinates moving with the shear flow. We also introduce the projection operators
\begin{equation*}
  f_0(y):=(\mathbb{P}_0f)(y)=\frac{1}{2\pi}\int_{\mathbb{T}}f(x,y)d x, \qquad f_{\neq}:=\mathbb{P}_{\neq}f=(I-\mathbb{P}_0)f.
\end{equation*}
It is then useful to isolate the \textit{nonlinear shear flow} component by
observing that
\begin{equation*}
  v(x,y,t)=(\nabla^\perp_t\Delta_t^{-1}w)(x,y,t)=v_0^x(y,t)e_1+(\nabla^\perp_t\psi_{\neq})(x,y,t).
\end{equation*}
In terms of the unknowns in the moving frame, the non-resistive MHD equations
\eqref{eq:nrMHD} can be equivalently expressed as
\begin{align}
	\label{eq:NL}
	\begin{split}
		\partial_t w  & =  \nu  \Delta_t w+ \alpha \partial_x \Delta_t \phi   +  (\nabla^\perp \phi \cdot \nabla) \Delta_t  \phi - v\cdot \nabla_t w, \\
		\p_t \phi                                          & =\alpha \p_x \Delta_t^{-1} w-v\cdot \nabla_t \phi,\\
    \p_tv_0^x&= \nu \p_{y}^2v_0^x+( b_{\neq} \cdot\nabla_t b_{\neq}^x -v_{\neq} \cdot\nabla_t v^x_{\neq})_0,\\
    w|_{t=0}&=w_{in}, \quad \phi|_{t=0}=\phi_{in}.
	\end{split}
\end{align}
where we identify inverse powers of differential operators with their symbols in
the Fourier space. As we discuss in greater detail in Section \ref{sec:linear},
the viscous dissipation and shear dynamics break the symmetry of the equations
and link the evolution of the streamfunction and magnetic potential in an
  intricate way. This link is
encoded in the \emph{good unknown}
\begin{align}
	\label{def:G}
 G(x,y,t) :=\nu \psi_{\neq}(x,y,t)+ \alpha\p_x \Delta_t^{-1}\phi_{\neq}(x,y,t),
\end{align}
and the equations \eqref{eq:NL} are rewritten in terms of $G,\phi$ in Section \ref{sec:linear}.

Our main goal in the following is to characterize the nonlinear Lyapunov
stability of the non-resistive MHD equations in terms of $(\phi, G)$ and we are particularly interested in possible destabilizing effects of a large viscosity. For this reason, from now on we fix $$\alpha=\nu=1.$$
The
stability properties turn out to drastically change depending on the time-scale
under consideration. Indeed, already from a first inspection of the system \eqref{eq:NL} we see that in the equation for $w$ there is a  nonlinear term involving factor scaling as $O(t^2)$, which is extremely dangerous since we have no available dissipation of $\phi$. However, we note that for small perturbations, on a sufficiently
short time-scale the nonlinear evolution can be treated perturbatively.
\begin{prop}[Perturbative time-scale]
	\label{prop:perturbative}
    Let $\alpha=1=\nu$. Then for every $N\geq 5$ there exists $\epsilon_0>0$ such that for any
  $0<\epsilon<\epsilon_0$ and any mean free initial data with
	\begin{align*}
		\|\phi_{in}\|_{H^N} + \|G_{in}\|_{H^N} < \epsilon,
	\end{align*}
	the corresponding solution satisfies the bound
	\begin{align*}
		\|\phi(t)\|_{H^N} + \|G(t)\|_{H^N} \lesssim \epsilon
	\end{align*}
	for all times $0<t \ll \epsilon^{-1/2}$.
\end{prop}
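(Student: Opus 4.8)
The plan is to run a bootstrap argument on $\|\phi(t)\|_{H^N}+\|G(t)\|_{H^N}$, using the reformulation of \eqref{eq:NL} in the variables $(\phi,G)$ from Section~\ref{sec:linear}. Fix a constant $C_0$ to be taken large, and let $[0,T]$ be the maximal interval on which $\|\phi(t)\|_{H^N}+\|G(t)\|_{H^N}\le 2C_0\epsilon$; since $N\ge 5>1$ the space $H^N$ is an algebra, so standard local well-posedness ensures $T>0$. First I would record the structure of the $(\phi,G)$-system: the point of the good unknown is that the genuinely dangerous linear term $\alpha\p_x\Delta_t\phi$ of the $w$-equation, whose Fourier symbol grows like $(1+t)^2$, cancels in $\p_t G$ against the contribution coming from $\p_t(\alpha\p_x\Delta_t^{-1}\phi)$, leaving a system of schematic form $\p_t G=\Delta_t G+\calL_1 G+\calL_2\phi+\calN_G$ and $\p_t\phi=\p_x G+\calL_3\phi+\calN_\phi$, where the $\calL_i$ are Fourier multipliers with symbols bounded by a constant ($\calL_1=2\Delta_t^{-1}\p_x(\p_y-t\p_x)$ being the borderline one, whose symbol is $O(1)$ but whose primitive in $t$ is only logarithmic, while the symbols of $\calL_2,\calL_3$ in addition decay in time), and $\calN_G,\calN_\phi$ collect the nonlinearities. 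One also has to carry along the zero modes $\phi_0$ and $v_0^x$; by the mean-free hypothesis they vanish at $t=0$, and they obey the one-dimensional zero-mode equations of \eqref{eq:NL}, forced purely by quadratic expressions in the nonzero modes, so I would estimate them last and find them to stay $o(\epsilon)$ on the time scale in question.

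Second, the energy estimate. Differentiating $\tfrac12(\|\phi\|_{H^N}^2+\|G\|_{H^N}^2)$, one drops the dissipative term $\langle\Delta_t G,G\rangle_{H^N}\le 0$ and invokes the linear analysis of Section~\ref{sec:linear}: the propagator of the linear part ($\p_t G=\Delta_t G+\calL_1 G+\calL_2\phi$, $\p_t\phi=\p_x G+\calL_3\phi$) is bounded on $H^N$ uniformly in time, the borderline contribution $\calL_1$ being absorbed by the dissipation or by a time-dependent Fourier weight. There remains the nonlinear contribution $\langle\calN_\phi,\phi\rangle_{H^N}+\langle\calN_G,G\rangle_{H^N}$. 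The guiding principle is that each moving-frame derivative $\nabla_t$, and the operator $\Delta_t$, costs a power of $(1+t)$, whereas $\Delta_t^{-1}$, $\p_x\Delta_t^{-1}$ and $\nabla^\perp_t\Delta_t^{-1}\p_x$ are bounded multipliers that buy it back. The transport nonlinearities --- $v\cdot\nabla_t\phi=\{\psi,\phi\}_t$ in the $\phi$-equation and $(\nabla^\perp\phi\cdot\nabla)\Delta_t\phi=\nabla_t\cdot(\Delta_t\phi\,\nabla^\perp_t\phi)$ in the $w$-equation --- are in divergence form since $\nabla_t\cdot v=\nabla_t\cdot\nabla^\perp_t\phi=0$, so their top-order part drops out of the $H^N$ inner product and only commutators remain, controlled by Kato--Ponce inequalities together with $H^{N-2}\hookrightarrow W^{1,\infty}$ (this is where the slack in $N\ge5$ is used). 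Substituting $\psi_{\neq}=G-\p_x\Delta_t^{-1}\phi_{\neq}$ throughout, and exploiting the nonlinear analogue of the good-unknown cancellation --- between the worst part of $\Delta_t^{-1}\nabla_t\cdot(\Delta_t\phi\,\nabla^\perp_t\phi)$, which enters $G$ through $\psi$, and the worst part of $\p_x\Delta_t^{-1}(v\cdot\nabla_t\phi)$, which enters $G$ through $\p_x\Delta_t^{-1}\phi$ --- I expect the bound
\[
 \big|\langle\calN_\phi,\phi\rangle_{H^N}+\langle\calN_G,G\rangle_{H^N}\big|\;\lesssim\;(1+t)\,\big(\|\phi\|_{H^N}+\|G\|_{H^N}\big)^3 .
\]

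Third, closing the bootstrap. With $E(t)^2:=\|\phi(t)\|_{H^N}^2+\|G(t)\|_{H^N}^2$ the above gives $\tfrac{d}{dt}E^2\lesssim(1+t)E^3$, so on $[0,T]$, using $E\le 2C_0\epsilon$,
\[
 E(t)^2\;\lesssim\;\epsilon^2+(2C_0\epsilon)^3\!\int_0^t(1+s)\,ds\;\lesssim\;\epsilon^2+(C_0\epsilon)^2\,\big(C_0\,\epsilon\,(1+t)^2\big).
\]
Taking $C_0$ large and $\epsilon_0$ small, as long as $C_0\,\epsilon\,(1+t)^2$ is below a universal threshold --- that is, for every $0<t\ll\epsilon^{-1/2}$ --- the right-hand side is $\le(C_0\epsilon)^2$, which strictly improves the bootstrap hypothesis; together with the analogous (lower order) estimates for $\phi_0,v_0^x$ this lets one continue $T$ past any such $t$ and proves Proposition~\ref{prop:perturbative}. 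The scale $\epsilon^{-1/2}$ is exactly the time at which the accumulated nonlinear effect $(1+t)^2\epsilon$ ceases to be small.

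The main obstacle is the displayed nonlinear bound, and specifically the contribution of the term $(\nabla^\perp\phi\cdot\nabla)\Delta_t\phi$ from the $w$-equation: this is the very mechanism behind the eventual large inflation of the current $j=\Delta_t\phi$, it carries through the factor $\Delta_t\phi$ two extra derivatives and up to three powers of $(1+t)$, and since $\phi$ enjoys no dissipation none of this is damped. Pushing its contribution down to the single power of $(1+t)$ used above, with no net loss of derivatives, forces one to use at once its divergence structure, the smoothing gained from $\Delta_t^{-1}$ upon passing to $\psi$ and hence $G$, and the nonlinear good-unknown cancellation; a cruder estimate yielding $(1+t)^2$ would only give the shorter time scale $t\ll\epsilon^{-1/3}$. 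The remaining ingredients --- the linear semigroup bound, the zero-mode estimates, and the other nonlinear interactions --- are routine once the moving-frame multiplier calculus for $\Delta_t^{\pm1}$ and $\nabla_t$ is set up.
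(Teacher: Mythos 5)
Your bootstrap skeleton and the $\epsilon^{-1/2}$ time-scale bookkeeping agree with the paper, and your physical explanation of why $\epsilon^{-1/2}$ is critical is correct. But there is a genuine gap in the central energy estimate: you drop the viscous dissipation $\langle\Delta_t G,G\rangle_{H^N}\le 0$ entirely and then claim a nonlinear bound of the form $(1+t)E^3$. The paper does not prove that bound; it proves (see the proof of Lemma \ref{lem:perturbative}) the weaker inequality
\begin{align*}
  \dt \calE(t) + \|A\nabla_t G\|_{L^2}^2 \;\le\; C(1+t)\calE^{3/2} + C\langle t\rangle \|A\nabla_t G\|_{L^2}\,\calE,
\end{align*}
where $\calE$ is the squared norm and $A=m_L^{-1}$, and it is precisely the retained dissipation $\|A\nabla_t G\|^2$ on the left that makes the second term on the right absorbable by Young's inequality. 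The term $\langle t\rangle\|A\nabla_t G\|\calE$ is unavoidable: in the \emph{reaction} regime of $NL_{G\to\phi}=-\langle A(\nabla^\perp G\cdot\nabla\phi),A\phi\rangle$, the factor $G$ carries the high frequency and $\nabla^\perp G$ genuinely costs a derivative on $G$. The Poisson-bracket/divergence-form commutator trick you invoke ($\langle A\{G,\phi\},A\phi\rangle=\langle[A,\{G,\cdot\}]\phi,A\phi\rangle$) only gains when $G$ is at low frequency (the transport regime); in the reaction regime the commutator $A(k,\eta)-A(k-l,\eta-\xi)$ is of size $A(k,\eta)$ and gains nothing. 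The paper resolves this by converting $\nabla^\perp$ to $\nabla_t$ at the cost of a factor $\langle t\rangle$ and then using that $\int\|A\nabla_t G\|^2$ is controlled by the dissipation. Without keeping that dissipation term, your claimed bound $(1+t)E^3$ fails and the bootstrap does not close — a crude pointwise-in-time substitute would involve $\|G\|_{H^{N+1}}$, which is not controlled.

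Two further, related points. First, you single out $(\nabla^\perp\phi\cdot\nabla)\Delta_t\phi$ as the main obstacle and propose a good-unknown cancellation between it and $\p_x\Delta_t^{-1}(v\cdot\nabla_t\phi)$; the paper does not use such a cancellation here, and in fact finds $NL_{\phi\to G}$ and $NL_G$ ``comparatively simpler'' because they are tested against $G$ and hence benefit from the dissipation and the smoothing of $\Delta_t^{-1}\nabla_t^\perp$. The genuinely delicate terms are those in the $\phi$-equation ($NL_\phi$ and $NL_{G\to\phi}$), since $\phi$ has no dissipation. Second, for the linear borderline term $\calL_1 = 2\p_x\p_y^t\Delta_t^{-1}$ you say it is ``absorbed by the dissipation or by a time-dependent Fourier weight'' — this needs to be made precise, and it cannot be absorbed by dissipation alone near the resonant time $t\approx\eta/k$ where $(\eta-kt)^2$ is small; the paper constructs the multiplier $m_L$ in \eqref{def:mL} exactly for this, and uses the dissipation only when $|\eta-kt|$ is large. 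You should introduce the weighted energy $\|m_L^{-1}(\phi,G)\|_{H^N}$ from the start, as in the paper.
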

We say that the equations are stable in Sobolev regularity on the perturbative
time-scale $\mathcal{O}(\epsilon^{-1/2})$. More generally, this stability
extends also to higher regularity, e.g. analytic or Gevrey regularity, instead
of the Sobolev regularity $H^N$ (see Section \ref{sec:perturbative}).
\begin{remark}
    To understand why $\mathcal{O}(\epsilon^{-1/2})$ is the natural perturbative time-scale, consider the nonlinear term $\nabla^\perp\phi\cdot \nabla \Delta_t\phi$ in the equation for $w$. The good unknown $G$ involves $\Delta_t^{-1}w$ and  one needs to understand the size of  $\Delta_t^{-1}(\nabla^\perp\phi\cdot \nabla \Delta_t\phi)$ when integrated in time. Exploiting the fluid dissipation and symmetries, it is  enough to estimate $\Delta_t^{-1}(\nabla^\perp\phi\cdot \nabla \nabla_t\phi)$. Now, the differential operators are anisotropic in time-frequency space. Therefore the worst-case scenario is when one loses time growth from $\nabla_t$ and one of the differential operators $\nabla$ or $\nabla^\perp$ is at high frequency, where one can replace a derivative by a factor of $\mathcal{O}(t)$. This makes the dangerous term of size $\mathcal{O}(\eps^2 t^2)$ after ``integrating in time'' the time-decay of $\Delta^{-1}_t$. Therefore, to treat the nonlinearity perturbatively one needs the time scale restriction $t^2\eps \ll 1$.
\end{remark}
On longer time scales, nonlinear effects become dominant and require a
tailor-made analysis. In particular, to handle infinite regularity losses on longer time-scales we need to work in Gevrey spaces where the associated norm is defined as
\begin{equation*}
    \|f\|^2_{\mathcal{G}^{s,\lambda}}:=\sum_{k\in \mathbb{Z}}\int_{\mathbb{R}} e^{2\lambda(|k|+|\eta|)^s}|\mathcal{F}(f)(k,\eta)|^2 d\eta,
\end{equation*}
where $\mathcal{F}$ is the Fourier transform. A precise, quantitative decrease of $\lambda$ with time then allows to push beyond the perturbative time scale. This requires the use of a weight adapted to the main nonlinear \textit{echoes} which we discuss more precisely. Our main results are summarized in the following theorem.
\begin{thm}
\label{thm:main}
    Consider the non-resistive MHD equations \eqref{eq:NL} with
    $\alpha=1=\nu$. Then for every $\tfrac 13< s\le 1  $, there exists a
    threshold $\epsilon_0>0$ and constants $\lambda_1\ge \lambda_2 >0$, such
    that the evolution satisfies the following stability estimate: for any $0<\eps\le \epsilon_0$ and any initial data which satisfies the bound
    \begin{align*}
        \Vert v^x_{0,in}\Vert_{\calG^{s,\lambda_1} } +\Vert \phi_{in}\Vert_{\calG^{s,\lambda_1} } +\Vert G_{in}\Vert_{\calG^{s,\lambda_1} }&\le \eps,
    \end{align*}
    the corresponding solution satisfies the energy estimates
    \begin{align*}
      \Vert \langle \p_y \rangle^{-1} v^x_0(t)\Vert_{\calG^{s,\lambda_2} }&\lesssim \eps, \\
      \Vert \phi(t)   \Vert_{\calG^{s,\lambda_2}} +\Vert G(t) \Vert_{\calG^{s,\lambda_2}}&\lesssim \eps,
    \end{align*}
    for all times $0\le t^{\frac 32}\le \delta \eps^{-1}$.

Furthermore, there exist $K,c,k_0>0$ such for $\chi(k)=\mathbf{1}_{|k|\geq k_0}$ and initial data satisfying also $\Vert \chi  \phi_{in} \Vert_{H^{-2}}\ge \max (K \delta \eps, \Vert \chi\p_x  G \Vert_{H^{-2}} , c\Vert   \phi_{in} \Vert_{H^{-2}})$, the current density and magnetic field satisfy the instability estimates
\begin{align}\label{eq:jinst}
  \begin{split}
    \min\left(\Vert j(t)\Vert_{L^2},\langle t \rangle  \Vert b(t)\Vert_{L^2}\right)\gtrsim \langle t\rangle^2 \Vert   \phi_{in} \Vert_{H^{-2}}.
\end{split}
\end{align}
In particular,  for $T= \delta \eps^{-\frac23}$ one has the large norm-inflation $\Vert j(T)\Vert_{L^2}\gtrsim \delta^{3}\eps^{-\frac 1 3 }$.
\end{thm}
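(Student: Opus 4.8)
The plan is to run a bootstrap argument in the Gevrey norm $\calG^{s,\lambda}$ with a time-dependent radius $\lambda(t)$ that decreases from $\lambda_1$ to $\lambda_2$, tracking the three quantities $\langle\p_y\rangle^{-1}v^x_0$, $\phi$ and $G$ simultaneously. First I would set up the hypotheses of a bootstrap: assume the energy estimates hold with a doubled constant on a maximal interval $[0,T^*]$ with $T^{*\frac32}\le\delta\eps^{-1}$, and then improve them. The key structural input is the rewriting of \eqref{eq:NL} in terms of the good unknown $G$ and $\phi$ from Section~\ref{sec:linear}; the linear part of that system has only mild (at worst algebraic in $t$) growth once one works with $G$ rather than $\psi$, because $G$ is precisely designed to cancel the dangerous coupling. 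The nonlinear terms — the transport terms $v\cdot\nabla_t(\cdot)$, the Lorentz term $\nabla^\perp\phi\cdot\nabla\Delta_t\phi$, and the reaction terms feeding $v^x_0$ — are estimated using the standard Gevrey product/commutator estimates, with the crucial gain coming from the decreasing-in-time weight: the term $\dot\lambda(t)\,|k,\eta|^s$ in the energy dissipation absorbs frequency losses of size $|k,\eta|^{1-s}$, and since $s>\tfrac13$ this is exactly enough to control the echo chains generated by the transport nonlinearity on the time scale $t\lesssim\eps^{-2/3}$ (equivalently $t^{3/2}\lesssim\eps^{-1}$).

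The main obstacle, and the heart of the analysis, is the treatment of the nonlinear \emph{echoes}: iterated resonances between frequencies $\eta$ and $\eta/k$ at times $t\approx\eta/k$ that, in the non-resistive setting, are amplified because $\phi$ has no dissipation and the Lorentz force contributes a factor $\mathcal{O}(t^2)$ (as explained in the Remark after Proposition~\ref{prop:perturbative}). I would handle this with a carefully constructed Fourier multiplier weight $w_k(t,\eta)$ (in the spirit of the Bedrossian–Masmoudi/Zillinger echo weights) that encodes the worst-case growth along a single echo and whose logarithmic derivative produces a favorable term in the energy identity precisely at the critical times; summing the geometric series of echoes is what forces $s>\tfrac13$ and fixes the time horizon $T\sim\delta\eps^{-1/2\cdot\ldots}$, i.e. $t^{3/2}\lesssim\delta\eps^{-1}$. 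This closes the stability half of the theorem, giving $\|\phi(t)\|_{\calG^{s,\lambda_2}}+\|G(t)\|_{\calG^{s,\lambda_2}}\lesssim\eps$ and the corresponding bound for $v^x_0$.

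For the instability half, the point is that the stability bounds, rather than showing decay, actually certify that the linear mechanism dominates. From $G=\nu\psi_{\neq}+\alpha\p_x\Delta_t^{-1}\phi_{\neq}$ and the smallness of $G$, one reads off that $\psi_{\neq}\approx-\p_x\Delta_t^{-1}\phi_{\neq}$ up to $\mathcal O(\eps)$, i.e. the streamfunction is slaved to the magnetic potential. The $\phi$ equation then becomes, to leading order, $\p_t\phi\approx\alpha\p_x\Delta_t^{-1}w$ with $w=\Delta_t\psi$, and tracking the low-regularity norm $\|\chi\phi_{in}\|_{H^{-2}}$ one shows $\phi_{\neq}(t)$ stays comparable to its initial value in $H^{-2}$ (this is where the hypothesis $\|\chi\phi_{in}\|_{H^{-2}}\ge\max(K\delta\eps,\|\chi\p_xG\|_{H^{-2}},c\|\phi_{in}\|_{H^{-2}})$ is used to dominate the error terms and the mean-zero contributions). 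Since $j=\Delta_t\phi$ and $\Delta_t$ carries a symbol of size $k^2+(\eta-kt)^2\sim k^2t^2$ on the relevant frequencies for $t\gtrsim\eta/k$, one gets $\|j(t)\|_{L^2}\gtrsim\langle t\rangle^2\|\phi_{in}\|_{H^{-2}}$ on the band $|k|\ge k_0$; the $\min$ with $\langle t\rangle\|b\|_{L^2}$ accounts for the alternative that growth is realized in $b=\nabla^\perp_t\phi$ (one derivative, hence one power of $t$) rather than $j$ (two derivatives). Finally, evaluating at $T=\delta\eps^{-2/3}$, so that $T^{3/2}=\delta^{3/2}\eps^{-1}\le\delta\eps^{-1}$ lies in the validity range, gives $\|j(T)\|_{L^2}\gtrsim T^2\cdot(\text{const})\gtrsim\delta^2\eps^{-4/3}\cdot\|\phi_{in}\|_{H^{-2}}$; using $\|\phi_{in}\|_{H^{-2}}\gtrsim\delta\eps$ from the hypothesis yields $\|j(T)\|_{L^2}\gtrsim\delta^3\eps^{-1/3}$, the claimed norm inflation. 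The one delicate point here is ensuring the perturbative/Gevrey control of $\phi_{\neq}$ from the first part is quantitatively strong enough that the $H^{-2}$ lower bound is not swamped, which is exactly why the instability hypothesis is phrased as a lower bound on $\|\chi\phi_{in}\|_{H^{-2}}$ relative to $\delta\eps$.
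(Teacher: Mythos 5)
Your stability argument matches the paper's approach in Section~\ref{sec:bootstrap}: a bootstrap in Gevrey spaces with a decreasing radius $\lambda(t)$ and a composite Fourier multiplier $A$ built from echo weights à la Bedrossian--Masmoudi (the paper's $m$, $J$, $\tilde J$) controlling the resonance chains, the $s>\tfrac13$ threshold and the horizon $t^{3/2}\lesssim\delta\eps^{-1}$ arising exactly as you describe.

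Your instability argument reaches the right endpoint but the proposed mechanism has a gap. You suggest to ``slave'' $\psi_{\neq}\approx-\p_x\Delta_t^{-1}\phi_{\neq}$ from the smallness of $G$, so that the $\phi$-equation reduces to $\p_t\phi\approx-\p_x^2\Delta_t^{-1}\phi$, whose time-integral is uniformly bounded and hence preserves $\|\phi\|_{H^{-2}}$. But $G$, $\phi$ and $\psi_{\neq}$ are all of size $\eps$, so ``$G$ small'' gives no slaving: the identity $G=\nu\psi_{\neq}+\alpha\p_x\Delta_t^{-1}\phi_{\neq}$ is exact and dropping the \emph{linear} coupling term $\p_x G$ from the $\phi$-equation introduces an $O(\eps)$, not $O(\eps^2)$, forcing. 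Even using the bootstrap dissipation bound $\int_0^T\|\nabla_t AG\|^2\lesssim\eps^2$, Cauchy--Schwarz only controls $\int_0^T\|\p_x G\|\lesssim T^{1/2}\eps\sim\delta^{1/2}\eps^{2/3}$, which is \emph{not} small compared to the hypothesized lower bound $\|\chi\phi_{in}\|_{H^{-2}}\geq K\delta\eps$. So the slaving argument by itself cannot certify that the $H^{-2}$ norm of $\phi$ survives.

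What the paper actually does (Lemma~\ref{lem:lowbd}) is write $(\phi,\p_x G)=(\phi_{lin},\p_x G_{lin})+(\text{perturbation})$, where the perturbation is forced only by the \emph{quadratic} nonlinearities and is therefore of size $\eps^2 t\lesssim\delta\eps^{4/3}\ll\delta\eps$. The key lower bound $\|\phi_{lin}(t)\|_X\gtrsim\|\phi_{in}\|_X$ in the $H^{-2}$-type seminorm $\|\cdot\|_X$ is obtained by a \emph{joint} energy identity for the coupled linear pair $(\phi_{lin},\p_x G_{lin})$ with a time-integrable multiplier $\tilde m$: the coupling $\p_x G$ is not discarded, it is absorbed by the $G$-dissipation and the integrability of $\p_x^2\Delta_t^{-1}$, and the frequency cutoff $|k|\geq k_0$ makes the off-diagonal contributions small. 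Your final conversion $\|j\|_{L^2}\gtrsim t^2\|\phi\|_{H^{-2}}$ and the evaluation at $T=\delta\eps^{-2/3}$ are correct; replace the slaving heuristic with this linear-plus-perturbation decomposition and the joint $(\phi_{lin},\p_x G_{lin})$ energy estimate.
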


Let us comment on these results:
\begin{itemize}
\item The instability is driven by nonlinear resonances, reminiscent of plasma
  echoes \cite{rae1984temporal}, acting on $G$. As we discuss in Section
  \ref{sec:phi}, for a model problem with $G=0$ stability can be be established
  on a longer time scale $t<\eps^{-1}/|\log(\eps)|$ and under weaker
  regularity requirements. Thus, the instability is driven by the good unknown
  $G$. This is in contrast to the stability of good unknowns in other systems
  such as the partially dissipative Boussinesq equations
  \cite{bedrossian2023nonlinear,masmoudi2022stability,masmoudi2023asymptotic},
  where the submodel with $G=0$ already includes the main resonance mechanism.
\item In view of the \emph{current instability} \eqref{eq:jinst}, stability past time scales $\mathcal{O}(\eps^{-\frac 23 })$ cannot be expected even for analytic initial data (see Section \ref{sec:perturbative} for a discussion).
  We emphasize that we have a full control of the solution far beyond the perturbative
  time scale of Proposition \ref{prop:perturbative}.
  \item The strong algebraic growth instability \eqref{eq:jinst} as well as strong resonances in the nonlinear terms and an associated lack of time integrability pose major, new mathematical challenges and exhibit qualitatively different behavior than in the (anisotropic) resistive settings studied in \cite{Dolce,knobel2023sobolev}.
  \item In the ideal case \cite{NiklasMHD2024}, when no dissipation is present, the second author obtains stability in Gevrey spaces up until times $t\sim \eps^{-1}$. Here
    constant magnetic field yields an interaction between velocity and magnetic
    field. Therefore, the decay of the velocity field (inviscid damping) and
    growth of the current (by a dynamo effect) balance such that velocity and
    magnetic field only grow by a finite amount. Therefore, the viscosity and
    the resulting symmetry breaking destabilize the equation on time scales $t\sim \eps^{-\frac 23 }$.
   \item In contrast, for the resistive and inviscid case \cite{knobel2023echoes,zhao2023asymptotic} there is a reverse symmetry breaking, which leads to inviscid damping in the velocity field. Therefore, in that case, one obtains global in-time stability in Gevrey spaces.
  \item When $\alpha,\nu$ are not set to be equal to $1$, it is possible to obtain a stability threshold, namely $\eps$ scaling as a suitable power of $\nu$. This requires some technical adjustments of the analysis performed in this paper. In particular, one needs to account for a possible transient growth for $G$ of size $\mathcal{O}(\nu^{-\frac23})$, which can be handled with Fourier multipliers which are by now standard in the literature, e.g. \cite{bedrossian17}. Moreover, for small $\nu$ we also expect to extend the time-scale in view of \cite{NiklasMHD2024}. It would be interesting to connect the results available in the ideal \cite{NiklasMHD2024} and fully dissipative cases \cite{liss2020sobolev, Dolce, knobel2024sobolev, wang2024stabilitythreshold2dmhd} in the limiting regimes of interest. This goes way beyond the scope of this paper and therefore we only present the results in the relevant example of large viscosity $\nu=1$ and $\alpha=1$.
\end{itemize}

The remainder of this article is structured as follows:
\begin{itemize}
\item In Section \ref{sec:linear} we establish linear stability results. In
  particular, this analysis allows us to identify an optimal choice of unknowns.
  Due to the strong frequency decoupling of the linearized equations, these
  equations exhibit strong stability properties in arbitrary regularity (which is an oversimplification compared to the nonlinear dynamics).
\item In Section \ref{sec:perturbative} we establish stability on the
  perturbative time scale $\eps^{-1/2}$. Furthermore, we discuss which nonlinear terms can become large after this time scale and provide a heuristic discussion of the physical resonance mechanisms. These considerations motivate the construction of energy functionals in the following sections.
\item Section \ref{sec:bootstrap} establishes nonlinear stability on
  non-perturbative time scales. Here, we use a bootstrap approach to control
  adapted Lyapunov functionals. In order to highlight the nonlinear resonance
  mechanisms, we individually discuss resonances in the magnetic and fluid parts
  of the equations in individual subsections.
  \item In Section \ref{sec:Instability} we prove the norm-inflation result \eqref{eq:jinst}.
\end{itemize}

\section{Linear Stability and Adapted Unknowns}
\label{sec:linear}
In this section, we consider the linearized stability problem around the
stationary solution \eqref{eq:stationarysol} and consider the case
$\alpha=\nu=1$. Expressing the nonlinear MHD equations \eqref{eq:NL} in terms of
$\phi$ and $G$ (defined in \eqref{def:G}), we obtain the system
\begin{align}
  \label{eq:phiGeq}
 	\begin{split}
    \p_t G &= (\Delta_t+ 2 \p_x \p_y^t\Delta_t^{-1} + \p_x^2 \Delta_t^{-1}) G- \p_x^3 \Delta_t^{-2} \phi_{\neq}\\
           &\quad + \Delta^{-1}_t \nabla^\perp_t \cdot(( \nabla^\perp \phi  \cdot\nabla) \nabla^\perp_t  \phi )_{\neq}-\Delta^{-1}_t  ( v\cdot \nabla_t w)_{\neq} -   \p_x\Delta_t^{-1}(v\cdot\nabla_t  \phi)_{\neq} \\
    \p_t \phi   &=- \p_x^2 \Delta_t^{-1} \phi_{\neq} + \p_x G \\
           &\quad - ( \nabla^\perp G\cdot \nabla \phi) + (\p_x\nabla^\perp \Delta_t^{-1} \phi_{\neq}\cdot \nabla) \phi - v^x_{0}\p_x \phi_{\neq}  \\
    \p_t v^x_{0}&= \p_y^2 v^x_{0} +( b_{\neq} \cdot\nabla_t b_{\neq}^x -v_{\neq} \cdot\nabla_t v^x_{\neq})_0.
	\end{split}
\end{align}
In order to investigate this system's stability, in this section, we study the
associated linearized problem:
\begin{align}
	\label{eq:linearizedeq}
	\begin{split}
		\p_t G    & = ( \Delta_t+2  \p_x \p_y^t\Delta_t^{-1} +  \p_x^2 \Delta_t^{-1}) G - \p_x^3 \Delta_t^{-2} \phi,                                      \\
		\p_t \phi & =- \p_x^2 \Delta_t^{-1} \phi + \p_x G.
	\end{split}
\end{align}
Since the underlying stationary solution \eqref{eq:stationarysol} is affine, the
linearized equations \eqref{eq:linearizedeq} decouple in frequency. Indeed,
applying a Fourier transform in both $x\in \T$ and $y\in \R$, the system
\eqref{eq:linearizedeq} decouples with respect to the frequencies $(k,\eta) \in
\Z \times \R$ and reads
\begin{align}
  \label{eq:decoupledsystem}
  \begin{split}
		\dt \begin{pmatrix}
      \mathcal{F} G \\ \mathcal{F} \phi
    \end{pmatrix}
		=
		\begin{pmatrix}
			-(k^2+(\eta-kt)^2 +2  \frac{k(\eta-kt)}{k^2+(\eta-kt)^2} +  \frac{k^2}{k^2+(\eta-kt)^2} & \frac{ik^3}{(k^2+(\eta-kt)^2)^2} \\
      ik   &  -\frac{k^2}{k^2+(\eta-kt)^2}
		\end{pmatrix}
		\begin{pmatrix}
			\mathcal{F}G \\ \mathcal{F}\phi
		\end{pmatrix}.
  \end{split}
\end{align}
Here, with slight abuse of notation, we again denote the Fourier transformed
functions by $\phi$ and $G$. On the one hand, this decoupling allows for a very
transparent analysis and strong stability properties. On the other hand, the
dynamics ``oversimplify''; the stability properties of the nonlinear equations
are much more restrictive due to resonance effects which are omitted in the
linearization (see Section \ref{sec:resonance} for a heuristic discussion of the
resonances).

In view to later applications to the nonlinear model, we establish stability in
terms of an energy weighted with a Fourier multiplier $m_{L}(t,D)$. In the present
setting, due to the strong decoupling, this multiplier can be constructed in a
straightforward way as follows 
\begin{align}
\label{def:mL}
     \begin{split}&\frac {\p_t m_L } {m_L} =
                                \begin{cases}
                                 \tfrac{5}{1+(t-\frac \eta k )^2 } & \text{ if } \vert k,\eta\vert\le 10\langle t \rangle, \\
                                  0 & \text{ else},
                                \end{cases}\\
      &    m_L(0,k,\eta)= \langle k,\eta\rangle^{-N}, 
      \end{split}
\end{align}
with $N\geq 1$. For
the construction in the nonlinear setting, see Section \ref{sec:weights}.

The linear stability results for the system \eqref{eq:decoupledsystem} are
summarized in the following proposition.
\begin{prop}
	\label{prop:linear}
  Let $A:\R_{+}\times \Z \times \R \rightarrow \R_{+}$ 
  be a smooth, positive Fourier multiplier, which satisfies 
  \begin{align}
\label{eq:Alin}
      A\geq m_L^{-1}, \qquad \frac {\p_t A } {A}\leq -\frac{\p_t m_L}{m_L} .
  \end{align}
  Then, for any solution
  $(G,\phi)$ of \eqref{eq:linearizedeq} the energy functional
  \begin{align}
    E(t) = \frac 12  \|A  G(t)\|_{L^2}^2 +  \frac 12  \|A  \phi(t)\|_{L^2}^2 
  \end{align}
  satisfies the energy estimate 
  \begin{align}
     E(t) + \frac 12  \int_0^t \Vert \nabla_t A  G  \Vert_{L^2}^2+  \Vert \p_x\Lambda_t^{-1} A  \phi \Vert_{L^2}^2 d\tau  \leq E(0). 
  \end{align}
  In particular, the linear error term
    \begin{align*}
L_{G,\phi}:=\, \langle (2 \p_x \p_y^t\Delta_t^{-1} +  \p_x^2 \Delta_t^{-1}) AG, AG\rangle -\langle \big(\p_x^3 \Delta_t^{-2}+\p_x\big)A\phi_{\neq} , AG\rangle.
    \end{align*}
    satisfies the bound
    \begin{align}
    \label{eq:linearestimate}
      |L_{G,\phi}| \le \frac 12 \Vert \nabla_t A  G \Vert_{L^2}^2 +\frac 12  \Vert\p_x \Lambda_t^{-1} A  \phi \Vert_{L^2}^2+ \frac{1}{2} \Vert\sqrt{\tfrac {\p_t m_{L} } {m_{L}} } A  \phi \Vert_{L^2}^2.
  \end{align}
     \end{prop}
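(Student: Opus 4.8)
The plan is a weighted energy estimate performed frequency by frequency, using that on the Fourier side \eqref{eq:linearizedeq} becomes the decoupled $2\times2$ system \eqref{eq:decoupledsystem}. First I would differentiate $E(t)$ in time and substitute the equations. Since $A=A(t,D)$ is a Fourier multiplier it commutes with $\Delta_t$ and with $\p_x^2\Delta_t^{-1}$, so the ``diagonal'' parts of the system produce exactly the dissipation $-\|\nabla_t AG\|_{L^2}^2-\|\p_x\Lambda_t^{-1}A\phi\|_{L^2}^2$ (recall the symbol of $-\p_x^2\Delta_t^{-1}$ equals $-k^2/(k^2+(\eta-kt)^2)\le 0$); differentiating the weight produces $\langle\tfrac{\p_tA}{A}AG,AG\rangle+\langle\tfrac{\p_tA}{A}A\phi,A\phi\rangle$; and a single integration by parts in $x$ combines the term $\langle A\p_xG,A\phi\rangle$ coming from the $\phi$-equation with $-\langle A\p_x^3\Delta_t^{-2}\phi,AG\rangle$ coming from the $G$-equation into $-\langle(\p_x^3\Delta_t^{-2}+\p_x)A\phi,AG\rangle$. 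Collecting the terms yields the identity $\tfrac{d}{dt}E=\langle\tfrac{\p_tA}{A}AG,AG\rangle+\langle\tfrac{\p_tA}{A}A\phi,A\phi\rangle-\|\nabla_tAG\|_{L^2}^2-\|\p_x\Lambda_t^{-1}A\phi\|_{L^2}^2+L_{G,\phi}$, with $L_{G,\phi}$ precisely the quadratic form in the statement.

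The core step is the pointwise-in-frequency bound \eqref{eq:linearestimate} on $L_{G,\phi}$, which I would prove by Plancherel and Young's inequality, distinguishing the resonant region $|k,\eta|\le 10\langle t\rangle$ --- where the engineered gain $\tfrac{\p_tm_L}{m_L}=\tfrac{5k^2}{k^2+(\eta-kt)^2}$ is active --- from its complement, in which $k^2+(\eta-kt)^2$ is already large. For the mixing term one observes that the symbol of $2\p_x\p_y^t\Delta_t^{-1}+\p_x^2\Delta_t^{-1}$ equals $-\p_t\log\!\big(k^2+(\eta-kt)^2\big)+\tfrac{k^2}{k^2+(\eta-kt)^2}$; it has size $O(1)$ and, using $|k|\ge 1$, is dominated by $k^2+(\eta-kt)^2$ with a good constant outside the thin band $\{|k|=1,\ |\eta-kt|\lesssim 1\}$, where the remaining small surplus is charged against the available dissipation. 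The two coupled terms $\langle\p_xA\phi,AG\rangle$ and $\langle\p_x^3\Delta_t^{-2}A\phi,AG\rangle$ I would estimate by splitting $|k|=\tfrac{|k|}{\sqrt{k^2+(\eta-kt)^2}}\cdot\sqrt{k^2+(\eta-kt)^2}$ in Young's inequality, so that the $A\phi$-factor is absorbed by $\|\p_x\Lambda_t^{-1}A\phi\|_{L^2}^2$ --- enlarged, in the resonant region, by $\|\sqrt{\p_tm_L/m_L}\,A\phi\|_{L^2}^2$ --- and the $AG$-factor by $\|\nabla_tAG\|_{L^2}^2$; the symbol of $\p_x^3\Delta_t^{-2}$ is even smaller, hence harmless.

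Finally I would close the estimate. Inserting \eqref{eq:linearestimate} into the identity for $\tfrac{d}{dt}E$, one half of each dissipation term absorbs the first two pieces of the bound; the hypotheses \eqref{eq:Alin}, namely $\tfrac{\p_tA}{A}\le-\tfrac{\p_tm_L}{m_L}\le 0$, make $\langle\tfrac{\p_tA}{A}AG,AG\rangle\le 0$ and give $\langle\tfrac{\p_tA}{A}A\phi,A\phi\rangle\le-\|\sqrt{\p_tm_L/m_L}\,A\phi\|_{L^2}^2$, which absorbs the last piece. This leaves $\tfrac{d}{dt}E+\tfrac12\|\nabla_tAG\|_{L^2}^2+\tfrac12\|\p_x\Lambda_t^{-1}A\phi\|_{L^2}^2\le 0$, and integrating over $[0,t]$ gives the energy estimate, while \eqref{eq:linearestimate} is the second assertion. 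I expect the main obstacle to be exactly the mode-by-mode verification of \eqref{eq:linearestimate} in the resonant zone $\eta\approx kt$ at small $|k|$, where neither the heat-type dissipation $\|\nabla_tAG\|_{L^2}$ nor the enhanced dissipation $\|\p_x\Lambda_t^{-1}A\phi\|_{L^2}$ has much strength; pinning down the constants there is precisely what dictates the form of $m_L$ and the cutoff $|k,\eta|\le 10\langle t\rangle$ in \eqref{def:mL}.
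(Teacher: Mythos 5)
Your plan follows the paper's argument: pass to Fourier variables where the linearized system decouples, expand $\tfrac{d}{dt}E$ using the product rule so that the weight derivative produces $\tfrac{\p_tA}{A}\le-\tfrac{\p_tm_L}{m_L}$, read off the two dissipation terms from the diagonal symbols, and then verify the frequency-by-frequency bound on the residual $L_{G,\phi}$ by Young's inequality, with the weight $m_L$ designed to cover the thin resonant band near $t\approx\eta/k$ at small $|k|$ where the viscous symbol $k^2+(\eta-kt)^2$ is $O(1)$. This is what the paper does (it works directly on $|\mathcal{F}G|^2+|\mathcal{F}\phi|^2$ instead of via inner products, and its Young step absorbs the coupling entirely into the $G$-dissipation using the identity $D/k^2=1+(t-\eta/k)^2$, but these are presentational, not substantive, differences).
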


 \begin{remark}
Choosing $A=m_L^{-1}$, we see that $E(t)$ is comparable to $\|(G,\phi)\|_{H^N}^2$ and hence that the linearized MHD equations are stable in arbitrary Sobolev regularity.

  In Section \ref{sec:weights} we construct a specific Fourier multiplier $A$
  which incorporates $m_{L}^{-1}$ and is
  additionally tailored to control nonlinear effects and to satisfy good
  commutator estimates. The estimates of this proposition also hold for that
  weight as shown in Lemma \ref{lem:LGphi}.

  Throughout this article we have normalized viscosity as $\nu=1$. For much larger
  viscosities, e.g. $\nu\geq 10$, linear stability estimates become trivial,
  since all possible growth mechanisms can easily be absorbed by viscous
  dissipation. In the case of small viscosities, it is well-known that the term
  $(-\nu(k^2+(\eta-kt)^2)+
  2\frac{\tfrac{\eta}{k}-t}{1+(\tfrac{\eta}{k}-t)^2})\mathcal{F}G$ can lead to
  norm inflation by a factor $(\nu k^2)^{-2/3}$. As mentioned in Section
  \ref{sec:introduction}, the results of this article can be extended to this
  case with some technical effort and will yield some nonlinear stability
  threshold in terms of $\nu>0$. Since we do not aim for optimality of that
  threshold, we do not pursue this extension in the present article.
\end{remark}

\begin{proof}[Proof of Proposition \ref{prop:linear}]
  We recall that all operators in the linearized MHD equations
  \eqref{eq:linearizedeq} are given by Fourier multipliers and that the
  equations hence decouple in Fourier space.
  Therefore, it suffices to show that for any given frequency $k,\eta \in \Z
  \times \R$, the following frequency-wise bound holds
      \begin{align}
\label{eq:lindec}
    \begin{split}
      & \qquad \frac 12 \dt \left( |AG|^2 + |A\phi |^2   \right)  \\
      &\leq  - \frac 12 (k^2 + (\eta-kt)^2) |AG|^2 - \frac 1 2 \frac{k^2}{k^2+(\eta-kt)^2} |A\phi|^2  - \frac {\p_t m_{L} }{m_{L}} |(AG, A\phi)|^2.
    \end{split}
      \end{align}
      Since Fourier multipliers commute and $\frac{\dt A}{A}\leq - \frac{\dt
        m_{L}}{m_{L}}$, we only need to compute the explicit time-derivatives:
    \begin{align*}
     \frac 12 \dt (|G|^2+ |\phi|^2) &\leq - (k^2 + (\eta-kt)^2) |G|^2 - \frac{k^2}{k^2+(\eta-kt)^2} |\phi|^2 \\
               & \quad + \frac{|k(\eta-kt)|}{k^2+(\eta-kt)^2} |G|^2 + \frac 1 2 \left(- \left(\frac{k^2}{k^2+(\eta-kt)^2}\right)^2+ 1 \right) |\phi| |G|\\
               &\le - (k^2 + (\eta-kt)^2) |G|^2 -\frac 1 2  \frac{k^2}{k^2+(\eta-kt)^2} |\phi|^2 \\
               & \quad + \frac{|k(\eta-kt)|}{k^2+(\eta-kt)^2} |G|^2 + \frac {(1+(t- \frac \eta k )^2 }8 \left\vert \left( \frac{k^2}{k^2+(\eta-kt)^2}\right)^2- 1 \right\vert  |G|^2. 
    \end{align*}
    Using this, it follows that \eqref{eq:lindec} holds if
\begin{align*}
     \frac{|k(\eta-kt)|}{k^2+(\eta-kt)^2}  + \frac {(1+(t- \frac \eta k )^2 }8 \left\vert \left( \frac{k^2}{k^2+(\eta-kt)^2}\right)^2- 1 \right\vert \le \frac 1 2 (k^2 + (\eta-kt)^2)- \frac {\p_t m_L }{m_L},
\end{align*}
which is the case by the definition of $m_L$ \eqref{def:mL} and our assumed bounds on  $A$.
\end{proof}
These linear stability results are necessary preconditions for nonlinear
stability but turn out to not be sufficient. In the following section, we
introduce nonlinear instability mechanisms and show that they lead to large
instabilities on any non-perturbative time-scale.

\section{Nonlinear Resonances and Short Time Stability}
\label{sec:perturbative}
In contrast to the linear stability results of Section \ref{sec:linear}, the
nonlinear dynamics exhibit resonance phenomena which can result in large norm
inflation and loss of stability. These resonances are reminiscent of ``plasma
echoes'' in the Vlasov-Poisson equations
\cite{malmberg1968plasma,rae1984temporal,bedrossian2016nonlinear,zillinger2020landau}
or ``fluid echoes'' in the Euler equations
\cite{yu2005fluid,dengZ2019,dengmasmoudi2018}. However, for the present setting
of the MHD equations the mechanism exhibits a much stronger time dependence,
strong coupling effects (e.g., the resonances are not suppressed by viscous
dissipation) and new time scales.

As a first result and in order to establish a proper context, we employ basic
well-posedness estimates to establish stability in Sobolev regularity (or any
higher regularity class) up to a perturbative time scale
$\mathcal{O}(\eps^{-1/2})$. More precisely, we show that on this time scale
the energy constructed in Section \ref{sec:linear} remains bounded also under
the nonlinear evolution.
\begin{lemma}
  \label{lem:perturbative}
  Let $N\in \N$ with $N\geq 5$ and let $m_L(t,D)$ be the Fourier multiplier defined in \eqref{def:mL}. Then there exists $\eps_0=\eps_0(N)>0$
  such that for all $0<\eps<\eps_0$ and any mean free initial data
  $(\phi_{in},G_{in}) \in H^N$ with
  \begin{align*}
    \|\phi_{in}\|_{H^N} + \|G_{in} \|_{H^N}< \eps
  \end{align*}
  the corresponding solution $(\phi,G)$ of the nonlinear MHD equation
  \eqref{eq:NL} satisfies
  \begin{align}
    \|\phi(t)\|_{H^N} + \|G(t)\|_{H^N} \lesssim \|m_L(t,D)(\phi,G)(t)\|_{L^2} \lesssim \eps
  \end{align}
  for all times $0<t \ll \eps^{-1/2}$.
\end{lemma}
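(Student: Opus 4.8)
The plan is a continuity/bootstrap argument built on the linear energy of Section~\ref{sec:linear}. Set $A:=m_L^{-1}(t,D)$; by \eqref{def:mL} one has $\p_t m_L/m_L\le 5/(1+(t-\tfrac\eta k)^2)$, which is integrable in $t$ with a universal bound, so the symbol of $A$ remains comparable to $\langle k,\eta\rangle^N$ for all $t\ge0$ and
\[
  \mathcal E(t):=\tfrac12\Vert AG(t)\Vert_{L^2}^2+\tfrac12\Vert A\phi(t)\Vert_{L^2}^2+\tfrac12\Vert\langle\p_y\rangle^N v^x_0(t)\Vert_{L^2}^2
\]
is comparable to $\Vert(\phi,G)(t)\Vert_{H^N}^2+\Vert v^x_0(t)\Vert_{H^N}^2$, with $\mathcal E(0)\le C_0\eps^2$ (treating $\Vert v^x_{0,in}\Vert_{H^N}\lesssim\eps$ as part of the smallness of the mean-free data). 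Fix a small $\delta>0$. I would make the bootstrap assumption $\mathcal E(t)\le 8C_0\eps^2$ on $[0,T]$, $T\le\delta\eps^{-1/2}$, and show it self-improves to $\mathcal E(t)\le2C_0\eps^2$; by local well-posedness in $H^N$, $N\ge5$, and continuity in $t$ this then holds up to $\delta\eps^{-1/2}$, and letting $\delta\to0$ gives the range $t\ll\eps^{-1/2}$.

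Differentiating $\mathcal E$ along \eqref{eq:phiGeq}, split the right-hand side into the terms linear in $(\phi,G)$ — which form exactly the system \eqref{eq:linearizedeq} — and the genuinely nonlinear terms. The linear part is controlled exactly as in the proof of Proposition~\ref{prop:linear}: the frequency-wise inequality \eqref{eq:lindec} (equivalently the bound \eqref{eq:linearestimate} on $L_{G,\phi}$) shows that, after absorbing the linear error terms, its contribution to $\tfrac{d}{dt}\mathcal E$ is bounded by the nonpositive quantity
\[
  -\tfrac12\Vert\nabla_t AG\Vert_{L^2}^2-\tfrac12\Vert\p_x\Lambda_t^{-1}A\phi\Vert_{L^2}^2-\Big\Vert\sqrt{\tfrac{\p_t m_L}{m_L}}\,(AG,A\phi)\Big\Vert_{L^2}^2,
\]
to which the heat dissipation $-\Vert\langle\p_y\rangle^N\p_y v^x_0\Vert_{L^2}^2\le0$ from the $v^x_0$ equation is added. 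Hence $\tfrac{d}{dt}\mathcal E(t)\le\mathcal N(t)$, where $\mathcal N(t)$ collects the pairings of the weighted nonlinear terms of \eqref{eq:phiGeq} against $AG$, $A\phi$, $\langle\p_y\rangle^N v^x_0$; for the nonlinear terms carrying a top-order derivative of $G$ one keeps a fraction of $\Vert\nabla_t AG\Vert_{L^2}^2$ to absorb them (such terms, since dissipation is available, are in fact harmless up to $\eps^{-2/3}\gg\eps^{-1/2}$).

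The heart of the argument is the nonlinear bound $|\mathcal N(t)|\lesssim\langle t\rangle\,\mathcal E(t)^{3/2}$. It rests on three points. (i) $H^N$ is a Banach algebra for $N\ge5>1$ (indeed for $N>1$ in two dimensions), which dispatches all products. (ii) There are enough derivatives to spare: the most derivative-costly term is $\Delta_t^{-1}(v\cdot\nabla_t w)$ with $w=\Delta_t\psi$, $\psi_{\neq}=G-\p_x\Delta_t^{-1}\phi_{\neq}$, $v=v^x_0e_1+\nabla_t^\perp\psi_{\neq}$, and after the two derivatives recovered from $\Delta_t^{-1}$ at most $N$ derivatives land on a single factor; the pure transport terms like $v^x_0\p_x\phi_{\neq}$ are treated by integration by parts / commutator estimates, using that $v^x_0$ is $x$-independent. (iii) The delicate point is the bookkeeping of the powers of $\langle t\rangle$ produced by the anisotropic multipliers $\Delta_t^{-1},\nabla_t,\Lambda_t^{-1}$ measured against the isotropic weight $A\simeq\langle k,\eta\rangle^N$: near the resonant set $\eta\approx kt$ these differ from their stationary counterparts by powers of $\langle t\rangle$, and one must verify that the worst case — $\nabla_t$ losing its time growth while a derivative from $\nabla$ or $\nabla^\perp$ at high frequency is traded for a factor $\mathcal O(t)$, exactly the scenario in the remark after Proposition~\ref{prop:perturbative} — costs only one power of $\langle t\rangle$, the remaining terms costing none or carrying compensating $\langle t\rangle^{-1}$ decay (from a $\p_x\Lambda_t^{-1}$ or $\p_x\Delta_t^{-1}$ factor). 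I expect (iii) to be the main obstacle; it is where the exponent $\tfrac12$ (rather than, say, $\tfrac13$) in the time scale is pinned down.

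Granting the nonlinear bound, on $[0,T]$ one has $\tfrac{d}{dt}\mathcal E\le C\langle t\rangle\mathcal E^{3/2}\le C(8C_0)^{1/2}\eps\,\langle t\rangle\,\mathcal E$, so Grönwall gives $\mathcal E(t)\le\mathcal E(0)\exp(C'\eps\langle t\rangle^2)$; for $t\le\delta\eps^{-1/2}$ one has $\eps\langle t\rangle^2\le C''\delta^2$, so choosing $\delta$ and then $\eps_0$ small makes the exponential at most $2$, whence $\mathcal E(t)\le2C_0\eps^2$ and the bootstrap closes. The claim follows since $\Vert\phi(t)\Vert_{H^N}+\Vert G(t)\Vert_{H^N}\lesssim\mathcal E(t)^{1/2}\lesssim\eps$.
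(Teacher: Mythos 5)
The proposal is correct and follows essentially the same route as the paper: it uses the weighted energy with $A=m_L^{-1}$ and the linear dissipation bound from Proposition~\ref{prop:linear}, reduces the nonlinear forcing to a paraproduct/commutator estimate of size $\langle t\rangle\,\mathcal E^{3/2}$ (plus a dissipation-absorbable piece), and closes by a continuity argument on the time range $t\ll\eps^{-1/2}$, exactly as in the paper's proof. The only cosmetic differences are that you include $v^x_0$ explicitly in the energy and close via Gr\"onwall rather than the paper's direct time integration of $\dot E\lesssim(1+t)E^{3/2}+\langle t\rangle\|A\nabla_tG\|E$; both variants pin the time scale to $\eps t^2\lesssim1$ in the same way.
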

This result in particular implies the stability estimate of Proposition
\ref{prop:perturbative}.

\begin{proof}[Proof of Lemma \ref{lem:perturbative}]
   We interpret the nonlinear MHD equations \eqref{eq:phiGeq} as
  a forced linear problem
  \begin{align*}
    \dt
    \begin{pmatrix}
      G \\ \phi
    \end{pmatrix}
   = L
    \begin{pmatrix}
      G \\ \phi
    \end{pmatrix}
    + F,
  \end{align*}
  and aim to establish an a priori bound on a suitable energy. As in Proposition \ref{prop:linear}, we set   $A=m_{L}^{-1}$ and hence, defining the energy
  \begin{align*}
    E(t):= \frac 12  \|A  G(t)\|_{L^2}^2 +  \frac 12  \|A  \phi(t)\|_{L^2}^2,
  \end{align*}
  we know that it satisfies satisfies the bound
  \begin{align*}
    \dt E(t)\leq \langle A (G,\phi), A F \rangle -  \Vert \nabla_t A  G \Vert_{L^2}^2 -\frac 12  \Vert\p_x \Lambda_t^{-1} A  \phi \Vert_{L^2}^2- \frac{1}{2} \Vert\sqrt{\tfrac {\p_t m_{L} } {m_{L}} } A  \phi \Vert_{L^2}^2.
  \end{align*}
  We now claim that the contribution of the nonlinear forcing,
  $\langle A (G,\phi), A F \rangle$, can be controlled in such a way that
  \begin{align}
    \label{eq:perturbclaim}
    \dt E(t) +\Vert A \nabla_t   G \Vert_{L^2}^2\leq  C (1+t) E(t)^{3/2} + C\langle t \rangle \|A\nabla_t G\| E(t).
  \end{align}
  Using the dissipation of $G$ and the bootstrap assumption that $E(t)\lesssim
  \eps^2$, it follows that
  \begin{align*}
    E(T)- E(0) \leq \int_0^T \dt E dt \leq C (T+T^2)\eps^3.
  \end{align*}
Thus, if $\eps (T+T^2)$ is sufficiently small we can improve the bootstrap assumption $E(t)\lesssim \eps^2$ and show that this holds true on the whole interval $[0,T]$, thus proving the desired result since we are interested in the case $T\ll \eps^{-1/2} $.

  To control the nonlinear forcing, we recall from \eqref{eq:NL} that $F$ consists of several quadratic
  nonlinearities:
  \begin{align*}
    F& = \begin{pmatrix}
      \Delta^{-1}_t \nabla^\perp_t \cdot(( \nabla^\perp \phi  \cdot\nabla) \nabla^\perp_t  \phi )_{\neq}-\Delta^{-1}_t  ( v\cdot \nabla_t w)_{\neq} -   \p_x\Delta_t^{-1}(v\cdot\nabla_t  \phi)_{\neq} \\
      - ( \nabla^\perp G\cdot \nabla \phi) + (\p_x\nabla^\perp \Delta_t^{-1} \phi_{\neq}\cdot \nabla) \phi - v^x_{0}\p_x \phi_{\neq}
    \end{pmatrix}.
  \end{align*}
  Before discussing each term, we make the following observations:
  \begin{itemize}
    \item The nonlinear forcing by $\Delta^{-1}_t \nabla^\perp_t \cdot(( \nabla^\perp \phi  \cdot\nabla) \nabla^\perp_t  \phi )_{\neq}$ increases in time as $\mathcal{O}(t^2)$. Therefore one can a priori not expect a better bound than \eqref{eq:perturbclaim} unless one considers energies which lose regularity in time (or rather lose in the radius of convergence, see Sections \ref{sec:resonance} and \ref{sec:bootstrap}). 
  \item Since we consider equations with \emph{partial dissipation}, any terms
    involving derivatives of $\phi$ need to exploit cancellation and commutators to avoid losses
    of derivatives.
    In particular, we need to exploit commutation properties of the
    multiplier $A$. In the perturbative regime considered in this section, this
    does not yet require detailed optimizations but will pose a major challenge
    to the construction of $A$ on longer time scales considered in Section \ref{sec:bootstrap}.
  \item Since $N\geq 5$, we may make use of Sobolev embeddings and paraproduct
    decompositions (this will also feature prominently in the following Section
    \ref{sec:resonance}). In particular, a main focus will be on products where
    one factor is frequency localized at ``low'' frequencies, while the other is
    at ``high'' frequencies.
  \item The time-dependent Fourier multiplier $\Delta_t^{-1}\leadsto
    \frac{1}{k^2+(\eta-kt)^2}$ always provides a gain with respect to $x$
    derivatives. For $y$ derivatives it also provides a strong gain unless
    $|\eta-kt|\leq \frac{|kt|}{2}$. It is in that frequency region where the
    loss of a factor $t$ as in the claimed estimate \eqref{eq:perturbclaim} occurs.
  \end{itemize}

  Since we establish improved estimates for all terms (additionally using Gevrey
  regularity) in Section \ref{sec:bootstrap}, in this proof we only discuss the
  most important terms in detail. We follow the same
  notational conventions as used in Section \ref{sec:bootstrap}, e.g. $NL_{\phi
    \rightarrow G}$ for the nonlinearity involving $\phi$ in the evolution
  equation for $G$, for ease of comparison.

  \underline{The quadratic nonlinearity in $\phi$:}
  As discussed above, a key role is played by the nonlinear self-interaction of
  $\phi$ and the (lack of) control of
  \begin{align*}
   NL_{\phi}&:= \langle A\big((\p_x\nabla^\perp \Delta_t^{-1} \phi \cdot \nabla)\phi\big) -(\p_x\nabla^\perp \Delta_t^{-1} \phi_{\neq}\cdot \nabla) A\phi ,A\phi\rangle,
  \end{align*}
  where we used fact that $\phi\mapsto \nabla^{\perp}a \cdot \nabla \phi$ is an
  antisymmetric operator for any smooth function $a$.
  By using Plancherel's Theorem we may equivalently express $NL_{\phi}$ in
  Fourier space as
  \begin{align*}
    NL_{\phi}&=i\sum_{k,l} \iint d\xi \ d\eta \ A\phi(t,k,\eta) A\phi(t,l,\xi) A\overline{\phi}(t,k-l,\eta-\xi) \\
    & \qquad \times \frac{A(t,k,\eta)- A(t,k-l,\eta-\xi)}{A(t,l,\xi)A(t,k-l,\eta-\xi)} \frac{l (\xi(k-l)-l(\eta-\xi))}{l^2+(\xi-lt)^2}.
  \end{align*}
  and claim that
  \begin{align*}
    |NL_{\phi}| \lesssim \langle t\rangle \|A\phi\|^3 \leq \langle t \rangle E(t)^{3/2}.
  \end{align*}
  Furthermore, we note that for $l=k-1$, $\eta=\xi$, $\xi-lt=0$, it holds that
  \begin{align*}
     \frac{l (\xi(k-l)-l(\eta-\xi))}{l^2+(\xi-lt)^2} = t
  \end{align*}
  and thus we cannot expect to obtain a better bound.

  In order to estimate $NL_{\phi}$ we employ a paraproduct decomposition. More
  precisely, since $(l,\xi) + (k-l, \eta-\xi)=(k,\xi)$ at least one of these vectors has to
  be of comparable or larger magnitude than $(k,\xi)$.
  Here, in the literature one commonly refers to the \emph{transport} regime
  \begin{align*}
    \langle l, \xi  \rangle \ll \langle k, \eta \rangle \text{ and } \langle k-l, \eta- \xi  \rangle \approx \langle k, \eta \rangle,
  \end{align*}
  the \emph{reaction} regime
   \begin{align*}
    \langle l, \xi  \rangle \approx \langle k, \eta \rangle \text{ and } \langle k-l, \eta- \xi  \rangle \ll \langle k, \eta \rangle.
   \end{align*}
   and the \emph{remainder} regime
   \begin{align*}
     \langle l, \xi  \rangle \gtrsim \langle k, \eta \rangle \text{ and } \langle k-l, \eta- \xi  \rangle \gtrsim \langle k, \eta \rangle,
   \end{align*}
   and correspondingly we split the integral as
   \begin{align*}
     NL_{\phi} = T_{\phi} + R_{\phi} + \mathcal{R}_{\phi}.
   \end{align*}

   \underline{Estimating the remainder $\mathcal{R}_{\phi}$:}
   In the remainder region, by the triangle inequality it holds that $\langle l,
   \xi  \rangle \approx \langle k-l, \eta- \xi  \rangle \gtrsim \langle k, \eta
   \rangle$ and therefore
   \begin{align*}
     \left| \frac{A(t,k,\eta)- A(t,k-l,\eta-\xi)}{A(t,l,\xi)A(t,k-l,\eta-\xi)}\right| \lesssim \langle l, \xi  \rangle^{-N}.
   \end{align*}
   Since $N\geq 5$ is much larger than the Sobolev embedding threshold, we can further
   easily absorb $\frac{l (\xi(k-l)-l(\eta-\xi))}{l^2+(\xi-lt)^2}$ and hence
   infer that
   \begin{align*}
     |\mathcal{R}_{\phi}| \lesssim \|A\phi\|^3 \leq E^{3/2}.
   \end{align*}

   \underline{Estimating the reaction $R_{\phi}$:}
   We estimate
   \begin{align*}
     \frac{|l (\xi(k-l)-l(\eta-\xi))|}{l^2+(\xi-lt)^2} \leq \langle t \rangle \langle k-l, \eta-\xi \rangle.
   \end{align*}
   If $\langle k-l, \eta- \xi  \rangle \gtrsim \langle k, \eta
   \rangle^{\frac{3}{N}}$ we may argue as in the remainder regime. Otherwise, we
   can absorb the factor $\langle k-l, \eta-\xi \rangle$ into the gain by the
   Sobolev embedding. We thus conclude that
   \begin{align*}
     |R_{\phi}| \lesssim \langle t \rangle \|A\phi\|^3 \leq \langle t \rangle E^{3/2}.
   \end{align*}

   \underline{Estimating the transport $T_{\phi}$:}
   By the same argument as above, we may assume that $\langle l, \xi \rangle
   \lesssim \langle k, \eta \rangle^{\frac{3}{N}}$.
   Since $\langle k-l, \eta-\xi \rangle \approx \langle k, \eta \rangle$ might
   be very large, we here rely on the gain by the commutator $A(t,k,\eta)-
   A(t,k-l,\eta-\xi)$ to control
   \begin{align*}
     \frac{|l (\xi(k-l)-l(\eta-\xi))|}{l^2+(\xi-lt)^2} \leq \langle l, \xi \rangle \langle k-l, \eta-\xi \rangle.
   \end{align*}
   More precisely, solving explicitly \eqref{def:mL} we find that
   \begin{align*}
     A(t,k,\eta)&= \langle k,\eta\rangle^N\exp\left(\int_0^t -\frac{5}{1+(\tau-\frac{\eta}{k})^2} 1_{|k,\eta|\leq 10 \langle \tau \rangle} d\tau\right)\\
     &=: \langle k,\eta\rangle^N m_{L}^{-1}(t,k,\eta).
   \end{align*}
   We thus split the commutator of $A$ into two distinct commutators:
   \begin{align*}
     & \qquad A(t,k,\eta)- A(t,k-l, \eta-\xi) \\
     &= \left(\langle k,\eta\rangle^N- \langle k-l,\eta-\xi\rangle^N\right) m_{L}^{-1}(t,k,\eta)\\
     & \qquad + \langle k-l,\eta-\xi\rangle^N\left(m_{L}^{-1}(t,k,\eta) - m_{L}^{-1}(t,k-l,\eta-\xi)  \right).
   \end{align*}
   Clearly, the first commutator gains a factor $\langle k,\eta \rangle^{-1}$
   and thus allows for an estimate of $|T_{\phi}|$ by $\|A\phi\|^3$.
   
   For the second commutator we distinguish between two cases:

   \begin{itemize}
   \item 
   If $|k,\eta|\geq 10 \langle t \rangle$, then
   $m_L(t,k,\eta)=m_L(t,k-l,\eta-\xi)=1$ and hence the commutator is trivial.
 \item  If instead $|k, \eta|\leq 10 \langle t \rangle$, then the loss of
   derivative is controlled by a loss of a factor $t$. We may thus bound
   \begin{align*}
     |T_{\phi}|\lesssim \langle t \rangle \|A\phi\|^{3}.
   \end{align*}
   \end{itemize}

   \underline{The action of $G$ on $\phi$:}
   The argument for the action of $G$ on $\phi$ is largely analogous to the
   previous case. In order to avoid duplication with
   Section \ref{sec:bootstrap}, where we establish more optimized bounds for
   more intricate weights, we discuss the different regimes briefly.
   
   Using Plancherel's identity, we consider
  \begin{align*}
    NL_{G \rightarrow \phi}&:= \langle A \phi, A(\nabla^{\perp}G \cdot \nabla \phi) \rangle = \langle A \phi, A(\nabla^{\perp}G \cdot \nabla \phi) -\nabla^{\perp}G \cdot \nabla A \phi  \rangle\\
    &= \sum_{k,l} \int d\eta d\xi A\phi(t,k,\eta) AG(t,l,\xi) A\overline{\phi}(t,k-l,\eta-\xi) \\
    & \qquad \times \frac{A(t,k,\eta)-A(t,k-l,\eta-\xi)}{A(t,l,\xi)A(t,k-l,\eta-\xi)} (\xi(k-l)-l(\eta-\xi)).
  \end{align*}

  In the remainder case, we obtain a straightforward bound by $E^{3/2}$.
  In the reaction case, we control
  \begin{align*}
    \|A \nabla^{\perp}G\| \leq \langle t \rangle \|A\nabla_t G\|
  \end{align*}
  and recall that $\|A\nabla_t G\|$ is square-integrable in time due to the
  resistive dissipation.
  Finally, in the transport regime we again exploit the commutation properties
  of $A$, as in the case above.
  Thus, over all we obtain a bound
  \begin{align*}
    NL_{G \rightarrow \phi} \leq \langle t \rangle E^{3/2} + \langle t \rangle E \|A\nabla_t G\|.
  \end{align*}

  \underline{The remaining nonlinear contributions:}
  The estimates for the remaining nonlinearities
  \begin{align*}
    NL_{\phi \to G}:=  \langle A\big(\Delta^{-1}_t \nabla^\perp_t \cdot(( \nabla^\perp \phi \cdot   \nabla) \nabla^\perp_t  \phi )_{\neq}\big), AG\rangle, \\
    NL_G:=-\langle A\big( \Delta^{-1}_t  ( v\cdot \nabla_t w)_{\neq} +  \p_x\Delta_t^{-1}(v\cdot\nabla_t  \phi)_{\neq}\big),AG\rangle,
  \end{align*}
  can be shown by the same arguments and are comparatively simpler in view of the
  additional dissipation of $G$.
  We thus obtain rough bounds
  \begin{align*}
    |NL_{\phi \to G}| + |NL_{G}| \leq \langle t \rangle E^{3/2} + \langle t \rangle E \|A\nabla_t G\|.
  \end{align*}
  These estimates are revisited in Proposition \ref{prop:NLbounds} (with
  improved weights) and thus a detailed discussion is omitted here for brevity.
  
\end{proof}
The perturbative time scale is thus determined by a standard energy argument and
the possible linear growth bound on the magnetic field (this growth is attained,
see Theorem \ref{thm:main}). In the following, we show that this estimate is
optimal and that nonlinear effects become dominant on longer time scales. Here,
in a first step we discuss nonlinear resonances in a model problem and the role
of viscous dissipation and magnetic coupling.

\subsection{Nonlinear Resonances and Paraproducts}
\label{sec:resonance}

While the linearized dynamics provide a good approximation for small data and on
corresponding perturbative time scales, this generally ceases to be the case on
longer time scales or for larger data. Indeed, in case of algebraic growth in
the linearized dynamics, this approximation may even fail to be consistent on
longer time scales, such as in Backus' objection to Landau damping
\cite{backus1960linearized}. It thus becomes crucial to capture nonlinear
effects and to identify for which frequencies and at which times large
corrections, so called resonances, can occur.

In this section, we argue on a heuristic level and in terms of paraproducts
(i.e., frequency decompositions into ``high'' and ``low'' frequencies) to
introduce the main nonlinear coupling and growth mechanisms. In particular, we
show that these possibly large corrections to the linearized dynamics are highly
frequency- and time-localized. We thus speak of \emph{resonances}.

We recall from \eqref{eq:phiGeq} that the nonlinear equations (\ref{eq:NL}) in
terms of the unknowns $G$ and $\phi$ read as
\begin{align*}
  \p_t G    & = (\Delta_t+2  \p_x \p_y^t\Delta_t^{-1} +  \p_x^2 \Delta_t^{-1}) G  - \p_x^3 \Delta_t^{-2} \phi                                      \\
            &\quad + \Delta^{-1}_t \p_x \nabla^\perp \phi \nabla \Delta_t \phi \\
            &\quad -\Delta^{-1}_t  ( v\cdot \nabla_t w)-   \p_x\Delta_t^{-1}(v\cdot\nabla_t  \phi),\\
  \p_t \phi & =- \p_x^2 \Delta_t^{-1} \phi +  \p_x  G \\
            &\quad -     \nabla^\perp G\nabla \phi - (\p_x\nabla^\perp \Delta_t^{-1} \phi_{\neq}\cdot \nabla) \phi.
\end{align*}
The main challenges in the following are thus posed by the control of the
quadratic nonlinearities. In our analysis, we consider these
nonlinearities as forcing terms acting on the linear evolution. However, while
parts of these forces can be treated perturbatively, some contributions turn out
to be very large and qualitatively change the dynamics. In the following
subsection we introduce these effects on a heuristic level, which serves as a
starting point for the rigorous analysis of Section \ref{sec:bootstrap}.

\subsection{Main Resonance Mechanisms}
In Section \ref{sec:linear} we have shown that the linearized evolution
equations are stable in Sobolev regularity. Furthermore, as shown in Lemma
\ref{lem:perturbative} this stability extends to the nonlinear equations for
small data on perturbative time scales. However, on longer time scales it can a
priori not be expected that the nonlinear dynamics remain close to the linear
ones. In this section, on a heuristic level, we show that this expectation
strongly fails and that the quadratic nonlinearities result in drastically
different stability properties.

In the interest of a clear and tractable heuristic model, we make several
simplifications to the model (see Section \ref{sec:bootstrap} for the full
model):
\begin{itemize}
\item Since the linear dynamics turn out to be stabilizing, we omit all linear
  terms except for the dissipation in $G$ and part of the coupling by the
  underlying magnetic field (which could propagate some of this dissipation to
  $\phi$).
\item We are mainly interested in the behavior at high frequencies, since there
  different stability properties and regularities are most visible. We thus
  focus on the evolution ``at high frequency'' and split all quadratic terms
  into ``high'' and ``low'' frequency parts, e.g.,
  \begin{align*}
    \nabla^{\perp}G \nabla \phi \leadsto \nabla^{\perp}G_{hi} \nabla \phi_{lo} + \nabla^{\perp}G_{lo} \nabla \phi_{hi} + \nabla^{\perp}G_{hi} \nabla \phi_{hi}.
  \end{align*}
  Note that there is no low-low part, as we are interested in the high frequency
  evolution. These splittings can be made precise in terms of Littlewood-Payley
  projections and paraproducts, but for the purposes of this section we think of
  low frequencies as of size about $1$ and high frequency about $\lambda$ with
  $\lambda \gg 1$ very large.
\item This further implies that derivatives are comparable to multiplication
  with $\lambda$, that is, $\dt D^N \phi_{hi}\approx \lambda^{N}\dt \phi_{hi}$.
  In particular, it follows that derivatives are only relevant for high
  frequency contributions and
  \begin{align*}
    \lambda^N\nabla^{\perp}G_{hi} \nabla
    \phi_{hi} \approx \nabla^{\perp} \lambda^{N/2} G_{hi} \nabla \lambda^{N/2} \phi_{hi}.
  \end{align*}
  For this reason, all high-high terms can be expected to be rather harmless.
\item Similarly, we assume that the $H^N$ norm of all low frequency parts
  remains bounded by a small constant $\epsilon>0$ uniformly in time (but does
  not necessarily decay). In particular, we may heuristically replace
  $\Delta_{t}^{-1}\phi_{lo}\approx \frac{\epsilon}{1+t^2}$.
\item In view of these simplifications, the most dangerous nonlinearities are
  those where additional derivatives are applied to the high frequency parts,
  since those may a priori be of size $\lambda\gg 1$.
\end{itemize}

With these preparations, we isolate the following contributions:
\begin{align*}
  \begin{split}
    \p_t G -\Delta_t G&=  \Delta^{-1}_t \p_y^t (( \nabla^\perp \phi^{hi}  \nabla) \p_y^t \phi^{lo} )_{\neq}, \\
    \p_t \phi_{\neq}  &= \p_x  G+  ( \nabla^\perp G^{hi}\nabla \phi^{lo})_{\neq}.
  \end{split}
\end{align*}
Here, the unknowns $G, \phi_{\neq}$ correspond to high frequency perturbations
with non-zero horizontal frequency. Focusing on the evolution at a given
frequency $(k,\eta)\in (\Z\setminus \{0\}) \times \R$, if $|\eta-kt|\gg 1$ is
large, the operator $\Delta^{-1}_t \p_y^t\approx |\eta-kt|^{-1}$ provides
smallness and time decay.

Thus, we focus on times $t$ such that $t\approx \tfrac \eta k$ and, as a worst
case, consider $\phi^{lo}\approx \epsilon \cos(x)$. In this way, the system in
Fourier space becomes a system of coupled ordinary differential equations with
nearest neighbor coupling. Moreover, unlike the full nonlinear system, it
decouples with respect to the vertical frequency $\eta$. We thus introduce a
\underline{three-mode model}:
\begin{align}
  \label{eq:modelproblem}
  \begin{split}
    \p_t G(k) + k^2(1+(t-\tfrac \xi k )^2 ) G(k) &=\eps t \tfrac \eta k \tfrac 1 {1+\vert t-\frac \eta k \vert }\phi(k+1),\\
    \p_t \phi(k+1) &=\eps t k  G(k), \\
    \p_t \phi(k) &= ik G(k),
  \end{split}
\end{align}
where we omit all other modes and consider times $t\approx \tfrac \eta k$. In
particular, we note that for $t=\frac{\eta}{k}$, the right-hand-side in the
first equation equals $\epsilon t^2 \phi(k+1)$ while the dissipation is only of
size $k^2G(k)$. Thus, for times $t\gtrsim \epsilon^{-1/2}$ past the perturbative
time scale (see Section \ref{sec:perturbative}), the good unknown $G$ cannot be
expected to remain small even if it was initially zero. Instead, neglecting the
time-dependence of $\phi(k+1)$, we obtain that
\begin{align*}
  G(k)\approx \eps t \tfrac \eta{k^3} \tfrac 1{\langle t-\frac \eta k \rangle^3 }\phi(k+1).
\end{align*}
Inserting this into our model yields a coupled system of two modes:
\begin{align*}
  \p_t \phi(k+1)&= (\eps t)^2 \tfrac \eta{k^2} \tfrac 1{\langle t-\frac \eta k \rangle^3 }\phi(k+1),\\
  \p_t \phi(k) &=  i\eps t \tfrac \eta{k^2} \tfrac 1{\langle t-\frac \eta k \rangle^3 }\phi(k+1).
\end{align*}
If $ t \le \delta \eps^{-\frac23}$ we obtain
\begin{align*}
  \phi(k+1) &\approx 1,\\
  \phi(k) &\approx \tfrac {\eta^{\frac 1 2 }}{k^{\frac 32 }} \phi(k+1).
\end{align*}
In particular, iterating this for a sequence of frequencies $k$ with $\eta$ fixed
(similar to echo chains in Landau damping \cite{bedrossian2020nonlinear}), this
suggests that
\begin{align*}
  \phi(1) \approx \phi(K) \prod_{k=1}^{K} \tfrac {\eta^{\frac 1 2 }}{k^{\frac 32 }} \underset{K\sim \sqrt[3]{\eta}}{\approx} \phi(K) \exp(\tfrac{1}{2}|\eta|^{1/3}).
\end{align*}
Such growth in Fourier space exactly corresponds to Gevrey 3 regularity.

We remark that on time scales much larger than this, i.e., $\eps t^{\frac 32 }
\gg 1 $, we instead obtain highly degenerate growth. More precisely, for $\tfrac
\eta {k^2} \ge 1 $ and $\phi_{in} (k+1)=1$, we deduce that
\begin{align*}
  \phi(k+1) &\approx \exp\left((\eps t)^2 \tfrac \eta{k^2} \int \tfrac 1{\langle \tau-\frac \eta k \rangle^3 } d\tau  \right)\\
  \phi(k) &\approx \tfrac 1 {\eps t }(\exp\left((\eps t)^2 \tfrac \eta{k^2} \int \tfrac 1{\langle \tau-\frac \eta k \rangle^3 } d\tau  \right)-1)\\
            &\gtrsim \eps t \tfrac \eta{k^2} \int \tfrac 1{\langle \tau-\frac \eta k \rangle^3} d\tau.
\end{align*}
This exponential growth, present even in the case $\eps^{-\frac23}\lesssim
t\lesssim \eps^{-1}$, cannot anymore be controlled by Gevrey regularity (similar
effects are expected for the inviscid Boussinesq beyond the natural time scale
considered in \cite{bedrossian2023nonlinear}).

\section{Nonlinear Stability: Energy Functionals and Bootstrap Approach}
\label{sec:bootstrap}
Building on the insights of Section \ref{sec:resonance}, our main aim in the
following is to establish nonlinear stability of the non-resistive MHD equations
\eqref{eq:NL} in terms of the unknowns $G$ and $\Phi$.
In particular, the resonances of the model problem \eqref{eq:modelproblem} suggest
that any stability result will require Gevrey regularity with some loss of the
radius of regularity in time.
We recall the definition of $G$ in \eqref{def:G} and the full nonlinear system we aim at studying is
\begin{align}
    \label{eq:Gsec4}\p_t G &= (\Delta_t+ 2 \p_x \p_y^t\Delta_t^{-1} + \p_x^2 \Delta_t^{-1}) G- \p_x^3 \Delta_t^{-2} \phi_{\neq}\\
    \notag &\quad + \Delta^{-1}_t \nabla^\perp_t \cdot(( \nabla^\perp \phi  \cdot\nabla) \nabla^\perp_t  \phi )_{\neq}-\Delta^{-1}_t  ( v\cdot \nabla_t w)_{\neq} -   \p_x\Delta_t^{-1}(v\cdot\nabla_t  \phi)_{\neq}, \\
    \label{eq:Phisec4}\p_t \phi   &=- \p_x^2 \Delta_t^{-1} \phi_{\neq} + \p_x G \\
  \notag  &\quad - ( \nabla^\perp G\cdot \nabla \phi) + (\p_x\nabla^\perp \Delta_t^{-1} \phi_{\neq}\cdot \nabla) \phi - v^x_{0}\p_x \phi_{\neq},  \\
   \label{eq:v0sec4} \p_t v^x_{0}&= \p_y^2 v^x_{0} +( b_{\neq} \cdot\nabla_t b_{\neq}^x -v_{\neq} \cdot\nabla_t v^x_{\neq})_0.
\end{align}
Our goal is to perform weighted energy estimates on $G,\phi,v_0^x$. To this end, we introduce the Fourier multiplier
\begin{align}
    \label{def:A0}
    A(t,k,\eta)&= \langle k,\eta  \rangle^N  (m^{-1}  J)(t,k,\eta ) e^{\lambda(t) \vert \eta ,k \vert^s},
\end{align}
where we define $m,\,  J$  in Section \ref{sec:weights}. We also need the technical variant $\tilde{A}$ which is defined as above with $J\to \tilde{J}$ that is defined in \eqref{def:tJ}.  At this stage of the discussion, it is enough to know that there exists $0<\tilde \lambda\leq \lambda(t)$ such that
\begin{align*}
\|f\|_{\mathcal{G}^{\tilde \lambda}}\lesssim \|A f\|_{L^2}.
\end{align*}

Moreover, the weight $m$ is introduced to handle  terms that are heuristically integrable in time. Instead, the weight $J$ takes into account the delicate anisotropic regularity loss and in particular is based on the weight introduced in \cite{masmoudi2022stability}, which is well adapted to the Gevrey-3 regularity loss we encounter in our problem.

We define our main energy functionals as follows:
\begin{align}
\label{def:ENL}
\mathcal{E}(t)&:=\frac12 \left(\|AG\|_{L^2}^2+\|A\phi\|_{L^2}^2\right),\\
\label{def:E0}\mathcal{E}_0(t)&:=\frac12\| A \langle \p_y\rangle^{-1}v_0^x\|_{L^2}^2.
\end{align}
\begin{remark}[On the zero-mode energy] Observe that we need to control the $x$-average of the velocity with one Sobolev derivative less. At high frequencies, this is equivalent at controlling $\psi_0=-\p_y^{-1}v_0^x$, which would be the natural quantity to handle. Indeed, all our unknowns are defined via the streamfunctions $\phi,\psi$. However, at low frequency there are some technical issues that could be easily handled by estimating $v_0^x$. The latter is expected to decay faster than $\psi_0^x$ by standard heat-semigroup bounds.
\end{remark}
Our goal is to control the energies defined in \eqref{def:ENL}-\eqref{def:E0} and prove that they remain of size $O(\eps^2)$ up to a time-scale $O(\eps^{-\frac23})$.  Indeed, as we show below, this would readily imply Theorem \ref{thm:main}. To prove the bounds on the energies, we proceed with a bootstrap argument. We first define the associated \textit{dissipation functionals} arising when taking the time-derivative of our energies as
    \begin{align}
    \label{def:DGphi}
    \mathcal{D}_{G,\phi}:&=
    \|A \nabla_tG\|_{L^2}^2+\|\p_x \Lambda_t^{-1} \phi_{\neq}\|^2_{L^2}+\sum_{\iota \in \{\lambda,m,q\}} \mathcal{D}_{\iota}[G]+\mathcal{D}_{\iota}[\phi_{\neq}],\\
    \label{def:Dv0}\mathcal{D}_{v_0}:&= \| A\langle \p_y \rangle^{-1} \p_yv_0^x \|_{L^2}+ \mathcal{D}_{\lambda}[\langle \p_y \rangle^{-1}v_{0}^x].
    \end{align}
Then, inspired by the bounds available for the linearized system, we formulate our \textit{bootstrap hypotheses}: for two given constants $c_*,C_*>1$ and a time $0<T\leq\delta \eps^{-2/3}$, we assume that
\begin{align}\tag{Bh}\begin{split}
     \label{bootstrap}
     & \quad \mathcal{E}(0)+\mathcal{E}_0(0)\leq c_*\eps^2,\\
    &\sup_{t\in [0,T]}\mathcal{E}(t)+\frac{1}{100}\int_{0}^{T}\mathcal{D}_{G,\phi}(\tau) \mathrm{d} \tau\leq 4C_* \eps^2,\\
    &\sup_{t\in [0,T]}\mathcal{E}_0(t)+\frac{1}{100}\int_{0}^{T}\mathcal{D}_{v_0}(\tau) \mathrm{d} \tau\leq 4C_* \eps^2.
\end{split}\end{align}
By standard local well-posedness theory (see also Section \ref{sec:perturbative}), we know that \eqref{bootstrap} are verified with $4C_*$ replaced by $C_*$ if one takes $T$ sufficiently small or $T=1$ and $\eps$ sufficiently small, the latter being the case we will consider from now on. This implies that $T\geq 1$ and therefore our goal is now to prove that \eqref{bootstrap} holds true with $4C_*$ replaced by $2C_*$ on the whole time-interval $(0,T)$, meaning that $T=\delta \eps^{-\frac23}$ as desired.

To accomplish this task, we first compute the energies identities which are the starting point for the nonlinear analysis.
\begin{lemma}
\label{lem:energy}
The following energy identities holds true
    \begin{align}
    \label{eq:dtE}
        \frac{\mathrm{d}}{\mathrm{d} t}\mathcal{E}+&\|A \nabla_tG\|_{L^2}^2+\|\p_x \Lambda_t^{-1} A\phi_{\neq}\|^2_{L^2}+\sum_{\iota \in \{\lambda,m,q\}} \mathcal{D}_{\iota}[G]+\mathcal{D}_{\iota}[\phi_{\neq}]\\
    \notag&=L_{G,\phi}+NL_{\phi \to G}+NL_{G \to \phi}+NL_\phi+NL_G
    \end{align}
    and
 \begin{align}
        \frac{\mathrm{d}}{\mathrm{d} t}\mathcal{E}_0&+\| A \langle \p_y\rangle^{-1}\p_y v_0^x\|_{L^2}^2+ \mathcal{D}_{\lambda}[\langle \p_y\rangle^{-1}v_0^x]= NL_{v_0}
    \end{align}
    where we define:
    \begin{itemize}
        \item The artificial damping terms are
        \begin{align}
\label{def:Dm}\mathcal{D}_m[f]&:=\big\|\sqrt{\frac{\p_tm}{m}}Af\big\|^2_{L^2},\\
\label{def:Dq}\mathcal{D}_q[f]&:=\big\|\sqrt{\frac{\p_t q}{q}}\tilde Af\big\|^2_{L^2},\\
\mathcal{D}_\lambda [f]&:=\big\| \vert \sqrt{- \p_t \lambda} \vert\nabla\vert^{\frac s 2 }  Af\big\|^2_{L^2}.
        \end{align}
        \item The linear term is
        \begin{align}
\label{def:LGphi}
L_{G,\phi}:=\, &\langle (2 \p_x \p_y^t\Delta_t^{-1} +  \p_x^2 \Delta_t^{-1}) AG, AG\rangle \\
           \notag &-\langle \big(\p_x^3 \Delta_t^{-2}+\p_x\big)A\phi_{\neq} , AG\rangle.
        \end{align}
        \item The nonlinear action of $\phi$ in $G$ is
        \begin{equation}
            \label{def:NLphitoG}
            NL_{\phi \to G}:=  \langle A\big(\Delta^{-1}_t \nabla^\perp_t \cdot(( \nabla^\perp \phi \cdot   \nabla) \nabla^\perp_t  \phi )_{\neq}\big), AG\rangle.
        \end{equation}
        \item The nonlinear action of $G$ in $\phi$ is
        \begin{equation}
            \label{def:NLGtophi}
NL_{G \to \phi}:= -\langle A( \nabla^\perp G\cdot\nabla \phi) , A\phi\rangle.
        \end{equation}
        \item  The remaining nonlinear terms arising from the equation of $\phi$ are
        \begin{equation}
    NL_{\phi}:= \langle A\big((\p_x\nabla^\perp \Delta_t^{-1} \phi_{\neq}\cdot \nabla)\phi\big),A\phi\rangle -\langle A(v^x_{0}\p_x \phi_{\neq}),A\phi_{\neq}\rangle.
        \end{equation}
        \item The remaining nonlinear terms arising from the equation of $G$ are
        \begin{equation}
            \label{def:NLG}
            NL_G:=-\langle A\big( \Delta^{-1}_t  ( v\cdot \nabla_t w)_{\neq} +  \p_x\Delta_t^{-1}(v\cdot\nabla_t  \phi)_{\neq}\big),AG\rangle.
        \end{equation}
        \item The nonlinear action on $v_0^x$ is
\begin{equation}
\label{def:NLv0}
            NL_{v_0}:=  \langle A\langle \p_y\rangle^{-1}( b_{\neq} \cdot\nabla_t b_{\neq}^x -v_{\neq}\cdot \nabla_t v^x_{\neq})_0, A\langle \p_y\rangle^{-1}v_0^x\rangle .
        \end{equation}
    \end{itemize}
\end{lemma}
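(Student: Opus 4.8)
The plan is to prove both identities by direct differentiation of the energies, substitution of the evolution equations \eqref{eq:Gsec4}--\eqref{eq:Phisec4} and \eqref{eq:v0sec4}, and collection of terms. Since every operator appearing is a Fourier multiplier commuting with $A$, $\tilde A$ and $\langle\partial_y\rangle^{-1}$, this is essentially bookkeeping; differentiation under the integral is justified by local well-posedness (cf. Section \ref{sec:perturbative}). Concretely I would write
\[
\frac{\mathrm d}{\mathrm dt}\mathcal{E}
= \Big\langle \tfrac{\partial_t A}{A}\, AG,\, AG\Big\rangle + \langle A\,\partial_t G,\, AG\rangle
+ \Big\langle \tfrac{\partial_t A}{A}\, A\phi,\, A\phi\Big\rangle + \langle A\,\partial_t \phi,\, A\phi\rangle
\]
and the analogous expression for $\mathcal{E}_0$, and then treat the weight contribution and the equation contribution separately.

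For the weight contribution I would use $\tfrac{\partial_t A}{A} = -\tfrac{\partial_t m}{m} + \tfrac{\partial_t J}{J} + \dot\lambda(t)\,|k,\eta|^s$, read off from \eqref{def:A0}. By the monotonicity built into the construction of Section \ref{sec:weights} ($m$ nondecreasing, $\lambda$ nonincreasing), the first piece produces $-\mathcal{D}_m[G]-\mathcal{D}_m[\phi_{\neq}]$, the $\dot\lambda$ piece produces $-\mathcal{D}_\lambda[G]-\mathcal{D}_\lambda[\phi_{\neq}]$, and — by the way $J$ is defined from the auxiliary weights $\tilde J$, $q$ — the $\tfrac{\partial_t J}{J}$ piece is exactly $-\mathcal{D}_q[G]-\mathcal{D}_q[\phi_{\neq}]$ in the $\tilde A$-norm. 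Moving these to the left-hand side yields precisely the $+\sum_{\iota\in\{\lambda,m,q\}}(\mathcal{D}_\iota[G]+\mathcal{D}_\iota[\phi_{\neq}])$ in \eqref{eq:dtE}; for the zero mode one has $m\equiv J\equiv 1$, so only $-\mathcal{D}_\lambda[\langle\partial_y\rangle^{-1}v_0^x]$ survives, which accounts for the absence of $\mathcal{D}_m$, $\mathcal{D}_q$ terms in $\mathcal{D}_{v_0}$.

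For the equation contribution, after substituting \eqref{eq:Gsec4}--\eqref{eq:Phisec4} I would first peel off the purely dissipative linear symbols: $\langle A\Delta_t G, AG\rangle = -\|A\nabla_t G\|_{L^2}^2$ since $-\Delta_t \leadsto k^2+(\eta-kt)^2 = |\nabla_t|^2$, and $\langle A(-\partial_x^2\Delta_t^{-1}\phi_{\neq}), A\phi\rangle = -\|\partial_x\Lambda_t^{-1}A\phi_{\neq}\|_{L^2}^2$ since $-\partial_x^2\Delta_t^{-1} \leadsto k^2/(k^2+(\eta-kt)^2) = |\partial_x\Lambda_t^{-1}|^2$; these are exactly the remaining left-hand side terms of \eqref{eq:dtE}. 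The surviving linear terms assemble into $L_{G,\phi}$: the operator $2\partial_x\partial_y^t\Delta_t^{-1}+\partial_x^2\Delta_t^{-1}$ acting on $G$ gives the first bracket of \eqref{def:LGphi}, while the two cross terms $-\langle A\,\partial_x^3\Delta_t^{-2}\phi_{\neq}, AG\rangle$ (from $\partial_tG$) and $\langle A\,\partial_x G, A\phi\rangle$ (from $\partial_t\phi$) combine, using that $\partial_x$ is skew-adjoint and annihilates the zero mode so $\langle A\,\partial_x G, A\phi\rangle = -\langle \partial_x A\phi_{\neq}, AG\rangle$, into $-\langle(\partial_x^3\Delta_t^{-2}+\partial_x)A\phi_{\neq}, AG\rangle$, matching \eqref{def:LGphi}. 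All other quadratic terms of \eqref{eq:Gsec4}--\eqref{eq:Phisec4} are, by inspection of the definitions, exactly $NL_{\phi\to G}+NL_G$ (from $\partial_tG$) and $NL_{G\to\phi}+NL_\phi$ (from $\partial_t\phi$), where in the $v_0^x\partial_x\phi_{\neq}$ term one notes its $x$-average vanishes so pairing against $A\phi$ equals pairing against $A\phi_{\neq}$. The $\mathcal{E}_0$ identity follows the same way, with $\langle A\langle\partial_y\rangle^{-1}\partial_y^2 v_0^x, A\langle\partial_y\rangle^{-1}v_0^x\rangle = -\|A\langle\partial_y\rangle^{-1}\partial_y v_0^x\|_{L^2}^2$ and the forcing equal to $NL_{v_0}$ by \eqref{def:NLv0}.

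I expect no genuine obstacle: once the symbol identities are recorded, the computation is transparent. The only nontrivial input — and the reason this is an \emph{identity} rather than an inequality — is that $\tfrac{\partial_t A}{A}$ splits into the three pieces above with the $J$-piece captured \emph{exactly} by $\mathcal{D}_q$ in the $\tilde A$-norm; this is not an estimate but a structural property designed into the weights $m$, $J$, $\tilde J$, $q$ in Section \ref{sec:weights}, so the ``work'' here really amounts to having set up that construction consistently with \eqref{eq:dtE}. Everything else is careful term-matching plus the two skew-adjointness/zero-mode observations above.
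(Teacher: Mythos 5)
Your proof follows the same direct-computation approach as the paper: differentiate $\mathcal{E}$, substitute \eqref{eq:Gsec4}--\eqref{eq:Phisec4}, split $\partial_t A/A$ into weight pieces, identify dissipations via the obvious symbol identities, and use skew-adjointness of $\partial_x$ plus the zero-mode annihilation to merge the two cross terms into $L_{G,\phi}$; the paper's proof is precisely this sketch, and your write-up fills it in correctly.

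One claim deserves a caveat, because you emphasize it as ``the only nontrivial input.'' You assert that the $\partial_t J/J$ piece is \emph{exactly} $-\mathcal{D}_q$ in the $\tilde A$-norm. This is not quite right. From \eqref{def:J}--\eqref{def:tJ}, $J=\tilde J+e^{8\rho|k|^{1/3}}$ and $\partial_t J=\partial_t\tilde J=-\tfrac{\partial_t q}{q}\tilde J$, so
\[
\frac{\partial_t J}{J}\,|Af|^2 \;=\; -\,\frac{\partial_t q}{q}\,\frac{\tilde J}{J}\,|Af|^2 \;=\; -\,\frac{\partial_t q}{q}\,\frac{J}{\tilde J}\,|\tilde A f|^2,
\]
since $\tilde A = (\tilde J/J)A$. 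Because $J/\tilde J\geq 1$ and $\partial_t q\geq 0$, the true dissipation produced by the $J$-piece is $\int \tfrac{\partial_t q}{q}\,\tfrac{J}{\tilde J}|\tilde A f|^2\geq \mathcal{D}_q[f]$, with strict inequality whenever $J>\tilde J$. Thus \eqref{eq:dtE} as written with $\mathcal{D}_q$ is an inequality ``$\leq$'', not an identity, for this slot; the paper's own phrasing is loose in the same way. The discrepancy is harmless because it only under-counts the dissipation and the direction is the one used in the bootstrap, but the mechanism is genuinely an estimate ($J/\tilde J\geq 1$), not an exact cancellation baked into the weight construction. It is worth recording this so the reader is not led to expect, and then fail to verify, an exact identity.
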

\begin{proof}
    The energy identity \eqref{eq:dtE} for $\mathcal{E}$ is a direct computation. Indeed, the first two good terms on the left hand side of \eqref{eq:dtE} are easily obtained from the first terms on the right hand side of \eqref{eq:Gsec4} and \eqref{eq:Phisec4}.  The artificial damping terms arise from the time derivative of the weight $A$. The remaining linear terms are obtained from the first line on the right hand side of \eqref{eq:Gsec4} and \eqref{eq:Phisec4}, where we just performed an integration by parts for the term $\p_x G$ in the equation \eqref{eq:Phisec4}. All the nonlinear terms are straightforward.
\end{proof}
The goal of the rest of the paper is to control the terms on the right hand side of our energy identity. From now on, without loss of generality we will always consider $t\geq 1$ (since otherwise it is enough to replace $t$ with $\langle t \rangle$).

First of all, by the results of Section \ref{sec:linear} we have the following bound on the linear terms.
\begin{lemma}
\label{lem:LGphi}
    Let $L_{G,\phi}$ be defined as in \eqref{def:LGphi}. Then,
  \begin{equation}
        \label{bd:LGphi}
        |L_{G,\phi}|\leq \frac{1}{2}\left(\mathcal{D}_{m}[G] + \mathcal{D}_{m}[\phi] + \|A \nabla_tG\|_{L^2}^2+\|\p_x \Lambda_t^{-1} A\phi_{\neq}\|^2_{L^2}\right).
    \end{equation}
\end{lemma}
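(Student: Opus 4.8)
The observation underlying the proof is that \eqref{bd:LGphi} is essentially the linear estimate \eqref{eq:linearestimate} of Proposition \ref{prop:linear}, now for the specific weight $A$ from \eqref{def:A0}: the quadratic form $L_{G,\phi}$ does not involve $\p_t A$ at all, and every operator occurring in it ($\p_x,\p_y^t,\Delta_t^{-1},\Delta_t^{-2}$) is a Fourier multiplier, hence commutes with $A$. So the plan is to repeat the frequency-localized computation from the proof of Proposition \ref{prop:linear}, replacing $G,\phi$ by $\mathcal{F}(AG)(k,\eta),\mathcal{F}(A\phi)(k,\eta)$ and $m_L$ by $m$: by Plancherel's theorem $L_{G,\phi}$ becomes an integral over $(k,\eta)$ of a real quadratic form with the symbols of \eqref{eq:decoupledsystem}, and — recalling that $k\neq0$ on all modes present, so that $k^2+(\eta-kt)^2\ge1$ throughout — it suffices to prove \eqref{bd:LGphi} mode by mode. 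The diagonal $G$-contribution from $2\p_x\p_y^t\Delta_t^{-1}+\p_x^2\Delta_t^{-1}$ is $O(1)\,|\mathcal{F}(AG)|^2$, and the off-diagonal contribution from $\p_x^3\Delta_t^{-2}+\p_x$ is bounded exactly as in Proposition \ref{prop:linear}.

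The one identity that makes the bookkeeping transparent is $1+(t-\tfrac{\eta}{k})^2=\tfrac{k^2+(\eta-kt)^2}{k^2}$, so that $\tfrac1{1+(t-\eta/k)^2}$ coincides with the symbol $\tfrac{k^2}{k^2+(\eta-kt)^2}$ of the $\phi$-dissipation $\|\p_x\Lambda_t^{-1}A\phi_{\neq}\|_{L^2}^2$. Together with the lower bound $\tfrac{\p_t m}{m}\ge c_0\,(1+(t-\tfrac{\eta}{k})^2)^{-1}$ on $\{|k,\eta|\le10\langle t\rangle\}$, with $c_0$ large — which the multiplier $m$ of Section \ref{sec:weights} retains, mirroring \eqref{def:mL} — I would Young-split the off-diagonal term against $\tfrac12\|A\nabla_tG\|_{L^2}^2$ and $\tfrac12\big(\|\p_x\Lambda_t^{-1}A\phi_{\neq}\|_{L^2}^2+\mathcal{D}_m[\phi]\big)$ just as in Proposition \ref{prop:linear}. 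The remaining diagonal $G$-error is absorbed by $\tfrac12\|A\nabla_tG\|_{L^2}^2$ on the region where $k^2+(\eta-kt)^2$ is bounded below by a fixed constant — which includes all of $\{|k,\eta|>10\langle t\rangle\}$, since there $k^2+(\eta-kt)^2\ge |k,\eta|^2/(4\langle t\rangle^2)>25$ by the comparability of $|(k,\eta-kt)|$ and $|(k,\eta)|$ up to $\langle t\rangle$-factors — and by $\tfrac12\mathcal{D}_m[G]$ on the complementary region $\{|k,\eta|\le10\langle t\rangle,\ k^2+(\eta-kt)^2<4\}$, where necessarily $|k|=1$, $|\eta-kt|<2$ and hence $\tfrac{\p_t m}{m}\ge c_0\tfrac{k^2}{k^2+(\eta-kt)^2}\ge\tfrac{c_0}{4}$. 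Integrating over $(k,\eta)$ then yields \eqref{bd:LGphi}.

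I do not expect a genuine obstacle here. The new features of $A$ — the Gevrey factor $e^{\lambda(t)|k,\eta|^s}$ and the echo multiplier $J$ — are Fourier multipliers that commute through and do not appear in this estimate at all; the only external fact needed is the quantitative resonant-window lower bound on $\tfrac{\p_t m}{m}$, which is built into the construction in Section \ref{sec:weights}. The only point requiring (mild) care, relative to the linear estimate \eqref{eq:linearestimate}, is that one must keep $\mathcal{D}_m[G]$ and not only $\mathcal{D}_m[\phi]$ on the right-hand side: this is precisely what absorbs the $O(1)$ diagonal error in $G$ at the low horizontal frequency $|k|=1$ near the resonant time $t\approx\eta/k$, where $k^2+(\eta-kt)^2$ is itself of order one and the plain dissipation $\|A\nabla_tG\|_{L^2}^2$ is too weak on its own.
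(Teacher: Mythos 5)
Your proof is correct and takes essentially the same route as the paper's, which simply invokes Proposition \ref{prop:linear}; you are unpacking the Fourier-side computation that the paper's one-line proof defers to. Two small points are worth recording, one cosmetic and one substantive.

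The cosmetic point concerns the region on which the lower bound $\tfrac{\p_t m}{m}\gtrsim(1+(t-\tfrac{\eta}{k})^2)^{-1}$ is available. You state it on $\{|k,\eta|\le10\langle t\rangle\}$, mirroring \eqref{def:mL}, but the nonlinear multiplier $m$ of \eqref{eq:m1} is active on $S_t=\{|k,\eta|\le10t^2\}$ (with the larger constant $10$, and with the extra $\sup_j$ and $\langle k-j\rangle^{-3}$ structure; taking $j=k$ recovers the bound you want). The two regions coincide only for $t$ not too small, but since the paper works with $t\ge1$ and the only place you actually need the lower bound is the small resonant region $\{k^2+(\eta-kt)^2<4,\ |k|=1,\ |\eta-kt|<2\}$ — which one readily checks lies inside $S_t$ for $t\ge1$ — this does not affect the argument. (Relatedly, your Young-split as written uses $\tfrac12\|A\nabla_tG\|^2$ twice, once against the off-diagonal and once against the diagonal $G$-error; a clean write-up would use, say, $\tfrac14$ in each place.)

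The substantive point is that you correctly identify $\mathcal{D}_m[G]$ — and not only $\mathcal{D}_m[\phi]$ — as genuinely necessary: near the $|k|=1$, $\eta\approx kt$ resonance the frequency $k^2+(\eta-kt)^2$ is of order one and $\|A\nabla_tG\|_{L^2}^2$ alone is too weak to absorb the diagonal error from $2\p_x\p_y^t\Delta_t^{-1}+\p_x^2\Delta_t^{-1}$, so one must draw on $\tfrac{\p_t m}{m}|AG|^2$. This is consistent with the Lemma's statement \eqref{bd:LGphi}, which includes both $\mathcal{D}_m[G]$ and $\mathcal{D}_m[\phi]$, and is a finer reading than the cited linear estimate \eqref{eq:linearestimate}, which lists only the $\phi$-weighted term. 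So your expansion of the argument is not just an unpacking but actually pins down where the $G$-weighted term enters, which the paper's citation of Proposition \ref{prop:linear} leaves implicit.
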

In fact, the bound on the linear terms essentially follow by the analysis of Section \ref{sec:linear}, but we present some details for clarity.
\begin{proof}[Proof of Lemma \ref{lem:LGphi}]
We apply the results of Proposition \ref{prop:linear}. More precisely, we note that the weight $m^{-1}$ satisfies the bound \eqref{eq:Alin} and hence we obtain the energy estimate \eqref{eq:linearestimate}. This, in particular, yields the desired bound \eqref{bd:LGphi}. 

\end{proof}

Then, in order to improve \eqref{bootstrap}, we need to establish improved bounds on the various nonlinear action terms, which we collect in the next key proposition.
\begin{prop}
\label{prop:NLbounds}
    Consider the energy identity \eqref{eq:dtE}. There exists constants $C_i>0$, $i=1,\dots,5$ for which, under the bootstrap hypothesis \eqref{bootstrap}, it holds that
\begin{align}
    \label{bd:NLphiG}\int_0^T |NL_{\phi \to G}| dt&\leq C_1\delta \eps^2,\\
     \label{bd:NLGphi}\int_0^T |NL_{G \to \phi}|dt&\leq C_2\delta \eps^2,\\
\label{bd:NLG}\int_0^T |NL_{G}|dt&\leq C_3\delta \eps^2,\\
\label{bd:NLphi}\int_0^T |NL_{\phi}|dt&\leq C_4\delta \eps^2,\\
\label{bd:NLv0}\int_0^T |NL_{v_0}|dt&\leq C_5\delta \eps^2.
\end{align}
\end{prop}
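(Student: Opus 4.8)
The plan is to prove the five bounds one at a time, following the paraproduct scheme of the proof of Lemma~\ref{lem:perturbative} but now exploiting the full weight $A$ from \eqref{def:A0} (and its variant $\tilde A$) and, decisively, the artificial dissipation functionals $\mathcal{D}_m,\mathcal{D}_q,\mathcal{D}_\lambda$ that appear on the left of the energy identity \eqref{eq:dtE}. For each nonlinearity I first use Plancherel together with the antisymmetry of the transport operators $f\mapsto\nabla^\perp a\cdot\nabla f$ (and, for the $\Delta_t^{-1}$‑prefactored terms, the self‑adjointness of $\Delta_t^{-1}$) to rewrite the quantity as a trilinear integral in $(k,\eta),(l,\xi)$ whose symbol is the weight commutator $\frac{A(t,k,\eta)-A(t,k-l,\eta-\xi)}{A(t,l,\xi)A(t,k-l,\eta-\xi)}$ times the ``coefficient symbol'' of the bilinear form; then I split the interaction into the \emph{transport}, \emph{reaction} and \emph{remainder} regimes of Section~\ref{sec:resonance}. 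The target of each estimate is a bound of the schematic form $|NL|\lesssim \mathcal{E}^{3/2}+\epsilon\,\mathcal{D}_{G,\phi}+\epsilon\,\mathcal{D}_{v_0}$ (possibly only after a Cauchy--Schwarz in time), since under \eqref{bootstrap} one has $\int_0^T\mathcal{E}^{3/2}\,d\tau\lesssim T\epsilon^3=\delta\epsilon^{7/3}$ and $\epsilon\int_0^T\mathcal{D}\,d\tau\lesssim\epsilon^3$, both $\le\delta\epsilon^2$ once $\epsilon_0$ is chosen small relative to $\delta$.

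In the \emph{remainder} regime both inputs have frequency $\gtrsim\langle k,\eta\rangle$, so the weight difference is dominated by $\langle l,\xi\rangle^{-N}$ — the Gevrey factor being treated via $|e^{\lambda a}-e^{\lambda b}|\le\lambda|a-b|e^{\lambda\max(a,b)}$ and $N$ large — which absorbs every coefficient symbol, even the powers of $\langle t\rangle$ produced by $\Delta_t^{-1}$ and the derivatives $\nabla,\nabla_t$, leaving $|NL|\lesssim\mathcal{E}^{3/2}$. In the \emph{transport} regime the low‑frequency factor contributes at most one power of $\langle t\rangle$ (from a $\nabla_t$ or $\p_x\Delta_t^{-1}$) while the commutator must absorb a genuine derivative on the high‑frequency factor; here one uses the commutator estimates for $m^{-1}Je^{\lambda|\cdot|^s}$ established in Section~\ref{sec:weights}, which produce either a $\langle k,\eta\rangle^{-1}$ gain from the polynomial part, or one of the factors $\sqrt{\p_tm/m}$, $\sqrt{\p_tq/q}$, $|\nabla|^{s/2}\sqrt{-\p_t\lambda}$ that pairs by Cauchy--Schwarz with the corresponding piece of $\mathcal{D}_{G,\phi}$; the residual $\langle t\rangle$ is then neutralized by the extra smallness of the low‑frequency factor, bounded by $\epsilon$ via $\mathcal{E}$ and the embedding $\|Af\|_{L^2}\gtrsim\|f\|_{\mathcal{G}^{\tilde\lambda}}$. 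This gives $|NL|\lesssim\epsilon\,\mathcal{D}_{G,\phi}+\mathcal{E}^{3/2}$ for the transport part.

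The \emph{reaction} regime is the heart of the argument: the output $(k,\eta)$ is comparable to one input while the other is low frequency, so no cancellation from the commutator is available and one must pay with the anisotropic structure near the echo times $t\approx\eta/k$. For $NL_{\phi\to G}$ and $NL_{G\to\phi}$ the coefficient at such a time has size $\sim\langle t\rangle^2$, resp. $\sim\langle t\rangle$, exactly as in the three‑mode model \eqref{eq:modelproblem}, but it is concentrated near $t=\eta/k$; splitting time into resonant and non‑resonant windows, on the latter the $\Delta_t^{-1}$‑factors provide enough $\langle t-\eta/k\rangle$‑decay to close directly, whereas on the resonant window the loss is distributed among $\sqrt{\p_tm/m}\sim\langle t-\eta/k\rangle^{-1}$, the echo weight $\p_tq/q$ (built as in \cite{masmoudi2022stability} precisely to sum the Gevrey‑$3$ echo chain $\prod_k\eta^{1/2}k^{-3/2}$), and $-\p_t\lambda\,|\nabla|^{s}$, yielding $|NL|\lesssim\|Af_{lo}\|\,\sqrt{\mathcal{D}_m[\cdot]+\mathcal{D}_q[\cdot]+\mathcal{D}_\lambda[\cdot]}\,\sqrt{\mathcal{D}_{G,\phi}}$ with $\|Af_{lo}\|\lesssim\epsilon$, hence $\lesssim\epsilon\,\mathcal{D}_{G,\phi}$ after Cauchy--Schwarz in time. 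It is precisely this step that forces the hypotheses $s>\tfrac13$ (so that Gevrey‑$3$ regularity is rich enough to sum the echo chain) and $t^{3/2}\le\delta\epsilon^{-1}$ (so that the $\langle t\rangle^2$‑forcing has not yet outrun the dissipation of $G$). The remaining pieces are strictly easier: the first term of $NL_\phi$ is the $\phi$ self‑interaction and is handled exactly as above (again with no dissipation on $\phi$, so the echo weights do all the work); $NL_G$ benefits from the full parabolic dissipation $\|A\nabla_tG\|^2$ of $G$; and the $v_0^x$‑terms in $NL_\phi$ and $NL_{v_0}$ are controlled with the one‑dimensional heat dissipation $\mathcal{D}_{v_0}$, the bound $\|A\langle\p_y\rangle^{-1}v_0^x\|\lesssim\epsilon$, and — for $v_0^x\p_x\phi_\neq=\p_x(v_0^x\phi_\neq)$ — the commutator $[A,v_0^x]$ together with the vanishing of $\langle v_0^xA\phi,\p_xA\phi\rangle$ since $v_0^x$ is $x$‑independent.

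The main obstacle is the reaction estimate for $NL_{\phi\to G}$: because $\phi$ carries no dissipation and the forcing grows like $\langle t\rangle^2$, the available margin is genuinely finite, so one must check carefully that the weight $J$ compensates the Gevrey‑$3$ echo growth exactly while $m$ and $q$ absorb the residual $\langle t\rangle$‑loss without themselves eroding the Gevrey radius $\lambda(t)$ too quickly — this is exactly what the construction of the weights in Section~\ref{sec:weights} is designed to deliver. A secondary difficulty is the interplay of the two commutators (polynomial--Gevrey and $m^{-1}J$) in the transport regime, which must be split and estimated using the commutator bounds for $m$, $J$, $\tilde J$ recorded there. Once \eqref{bd:NLphiG}--\eqref{bd:NLv0} are in place, inserting them into \eqref{eq:dtE} together with Lemma~\ref{lem:LGphi} yields $\mathcal{E}(T)+\tfrac1{100}\int_0^T\mathcal{D}_{G,\phi}\le(c_*+C\delta)\epsilon^2$ and the analogous bound for $\mathcal{E}_0$, which improves \eqref{bootstrap} from $4C_*$ to $2C_*$ provided $\delta$ is small relative to $\sum_iC_i$ and $C_*$ and $\epsilon_0$ small relative to $\delta$.
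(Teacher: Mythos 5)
Your overall scheme — Plancherel plus antisymmetry/self‑adjointness, paraproduct split into transport/reaction/remainder, weight commutators for $m^{-1}Je^{\lambda|\cdot|^s}$, and pairing against the artificial dissipations $\mathcal{D}_m,\mathcal{D}_q,\mathcal{D}_\lambda$ and the viscous dissipation of $G$ — is the same as the paper's, and your reading of the reaction regime as the decisive step is correct. However, the schematic target bound you state for the reaction terms is genuinely too strong and would fail if you tried to establish it. You claim $|NL|\lesssim \|Af_{lo}\|\sqrt{\mathcal{D}_m+\mathcal{D}_q+\mathcal{D}_\lambda}\sqrt{\mathcal{D}_{G,\phi}}\lesssim\epsilon\,\mathcal{D}_{G,\phi}$, i.e.\ that the echo weights $m,q,\lambda$ absorb the full $\langle t\rangle^2$ coefficient at the resonant time. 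They do not: what the paper actually proves is
\begin{align*}
R_{\phi\to G}\lesssim t\,\|A\nabla_tG\|\,\|A\phi\|\Bigl(\|A\phi\|+t^{1/2}\|\tilde A\sqrt{\tfrac{\p_t q}{q}}\phi\|+t^{1/2}\|A\sqrt{\tfrac{\p_t m}{m}}\phi\|\Bigr),
\end{align*}
and the analogous bounds for $R_{G\to\phi}$ and $T_{G\to\phi}$ carry explicit $\langle t\rangle^{3/2}$ and $\langle t\rangle^{5/4-3s/4}$ prefactors. These powers of $t$ survive because the weights only trade one algebraic factor in $t-\eta/k$ for a dissipation factor each; the residual $t^{1/2}$ (on top of the $t$ coming from $\nabla_t$ applied to the low‑frequency factor) is irreducible. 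After inserting $\|A\phi\|\lesssim\epsilon$ this gives terms of size $t^{3/2}\epsilon\,\mathcal{D}_{G,\phi}$, and only the time‑scale restriction $\epsilon T^{3/2}\lesssim\delta$ (equivalently $T\le\delta\epsilon^{-2/3}$) makes $\int_0^T t^{3/2}\epsilon\,\mathcal{D}_{G,\phi}\,d\tau\lesssim T^{3/2}\epsilon^3\lesssim\delta^{3/2}\epsilon^2$ small. You do cite $t^{3/2}\le\delta\epsilon^{-1}$, but as a side remark rather than as the explicit factor multiplying the estimate; your displayed target $|NL|\lesssim\mathcal{E}^{3/2}+\epsilon\,\mathcal{D}_{G,\phi}+\epsilon\,\mathcal{D}_{v_0}$ would let the argument close at the much longer time scale $\epsilon^{-1}$, which is false for this system and contradicts the instability part of Theorem~\ref{thm:main}.

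A secondary, smaller imprecision: in the reaction regime the paper performs an additional subdivision according to whether each of the two active frequencies lies in its resonant interval $\tilde I_{k,\eta}$, giving four sub‑cases (\emph{R/NR}, \emph{NR/NR}, \emph{R/R}, \emph{NR/R}); each needs a different combination of Lemma~\ref{lem:fullm1}~iii) and Lemma~\ref{lem:fullJ}~ii)--v), and the $NR/R$ case specifically requires the auxiliary weight $\tilde J$ (hence the functional $\mathcal{D}_q$ built with $\tilde A$ rather than $A$). This finer splitting is what actually localizes the $t$‑loss; a single ``resonant window'' argument of the kind you sketch does not produce the $\sqrt{\p_t q/q}$ factor on the correct input. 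So: right tools, right heart of the matter, but the reaction estimate needs to carry the $t^{3/2}$ explicitly and to be split as in Section~\ref{sec:bootstrap} to be closable.
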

Finally, with the proposition above and Lemma \ref{lem:energy} at hand, we are ready to show the proof of the stability part of the main Theorem \ref{thm:main}.
\begin{proof}[Proof of stability in Theorem \ref{thm:main}]
    Let $0<T\leq \delta \epsilon^{-2/3}$ be the maximal time such that the bootstrap hypothesis \eqref{bootstrap} holds.
    Then integrating the energy inequality of Lemma \ref{lem:energy} and using the bounds of Proposition \ref{prop:NLbounds}, it follows that the energy satisfies an improved bootstrap hypothesis
    \begin{align}
    \tag{Bh} \label{bootstrap2}
    \begin{split}
    &\sup_{t\in [0,T]}\mathcal{E}(t)+\frac{1}{100}\int_{0}^{T}\mathcal{D}_{G,\phi}(\tau) \mathrm{d} \tau\leq C_* \epsilon^2,
    \end{split}
    \end{align}
    whenever $\delta\sum_{i=1}^5C_i\leq C_*.$
    If $T< \delta \eps^{-\frac{2}{3}}$ this improved bound combined with local in time wellposedness implies that the (non-improved) bootstrap hypothesis \eqref{bootstrap} holds at least for a short additional time, which contradicts the maximality of $T$.
    Therefore, the estimate \eqref{bootstrap} is satisfied with $T=\delta \eps^{-\frac{2}{3}}$.

    Having established this uniform bound on the energy functional, the upper bounds in Theorem \ref{thm:main}   are a straightforward consequence of the properties of $A$.
\end{proof}
The instability in Theorem \ref{thm:main} is also a consequence of the fact that the bootstrap estimates \eqref{bootstrap} holds true on the time interval $[0,\delta \eps^{-\frac23}]$. Since this requires some technical effort, we postpone the proof of the instability to Section \ref{sec:Instability} and below we present the proof of Proposition \ref{prop:NLbounds}.

\subsection{Weights and Frequency Sets}
\label{sec:weights}
In this section, we introduce the main technical tools needed in the rest of the paper.

\subsubsection{Useful sets}
We first define some time-frequency sets, which are useful to split the main nonlinear interactions and define the weights.

The \textit{reaction, transport and remainder sets} are
\begin{align}
    \label{def:SR}S_R &=\{ ((k,\eta),(l ,\xi )) \in (\mathbb{Z}\times \mathbb{R})^2\, :\, \ \vert k-l,\eta -\xi \vert\ge 8 \vert l, \xi\vert\}, \\
    \label{def:ST}S_T &=\{ ((k,\eta),(l ,\xi )) \in (\mathbb{Z}\times \mathbb{R})^2\, :\, \ 8\vert k-l,\eta -\xi \vert\le  \vert l, \xi\vert \},  \\
    \label{def:ScalR}S_\calR  &=\{ ((k,\eta),(l ,\xi )) \in (\mathbb{Z}\times \mathbb{R})^2 \, :\, \ \tfrac 1 8  \vert l, \xi\vert\le \vert k-l,\eta -\xi \vert\le  8 \vert l, \xi\vert\} .
\end{align}
These sets are introduced to split the nonlinear terms by a standard paraproduct decomposition (see for instance \cite{masmoudi2022stability}).

The following set
 \begin{align}
    S_{t} &= \{(k,\eta)\in \mathbb{Z}\times \mathbb{R}\, :  \, \vert k,\eta \vert\leq 10 t^{2} \},
\end{align}
is used to apply an adapted weight if the time exceeds a high frequency threshold.

As is standard in the field by now, we introduce the resonant times and intervals, where in particular we follow the definitions in  \cite{masmoudi2022stability}. For $1\le \vert k\vert \le \vert \eta\vert ^{\frac 1 3 }   $  we define
\begin{align*}
    t_{k,\eta}^\pm&= \frac \eta k \pm \frac \eta{2k^3 } \\
    \tilde I_{k,\eta } &= \begin{cases}
    [t^-_{k,\eta},t^+_{k,\eta}] &\qquad 1\le \vert k \vert \le \vert \eta \vert^{\frac 1 3}, \ \eta k >0,\\
        \emptyset & \qquad\text{otherwise} .
    \end{cases}   \\
    I_{k,\eta } &= \begin{cases}
    [\tfrac 1 2 (\tfrac \eta k +\tfrac {\vert \eta\vert } {\vert k\vert +1}),\tfrac 1 2 (\tfrac \eta k +\tfrac {\vert \eta\vert } {\vert k\vert -1})] &\qquad 2\le \vert k \vert \le \vert \eta \vert^{\frac 1 3}, \ \eta k >0\\
     [\tfrac \eta  3 ,2\eta]&  \qquad k = 1, \\
        \emptyset & \qquad \text{otherwise}.
    \end{cases}
\end{align*}
We also denote
\[
\tilde I_{k,\eta}=\tilde I_{k,\eta}^L\cup \tilde I_{k,\eta}^R, \qquad \tilde I_{k,\eta}^L=[t_{k,\eta}^{-},\tfrac{\eta}{k}],
\qquad \tilde I_{k,\eta}^R=[\tfrac{\eta}{k},t_{k,\eta}^{+}].
\]
The same convention is used for $I_{k,\eta}$ with $k\geq 2$ when we replace $t_{k,\eta}^{\pm}$ with $\eta/(2k)+\eta/(2(k\mp 1)$. For $I_{1,\eta}$ instead $I_{1,\eta}^L=[\eta/3,\eta]$ and $I_{1,\eta}^R=[\eta,2\eta]$.
\subsubsection{Definition of weights}
We are now in the position of defining the weights. Recall the definitions of the weights $A$ and $\tilde A$ in \eqref{def:A0}. Here we use the notation for the sets introduced in the previous section.

First of all, to define the function $\lambda:\mathbb{R}\to \mathbb{R}$ we fix $\rho_0 \in (0,1)$, $\gamma_*\in (0,\frac32(s-\frac13))$ and set
 \begin{align}
 \label{def:lambda}
    \begin{cases}
            \p_t\lambda &=-  \rho_0\langle t\rangle^{-(1 +\gamma_*) },\\
            \lambda(0)&= \lambda_0,
    \end{cases}
\end{align}
for $\lambda_0>0$ large enough. We define the weight $m$ as
\begin{align}
\label{eq:m1} \begin{cases}
    \p_t m(t,k,\eta)= \sup\limits_{j\in \mathbb{Z}\setminus\{0\}} \left(\tfrac {10}{1+(\tfrac \eta j -t )^2 }\tfrac 1{\langle k-j \rangle^3 }\right) m(t,k,\eta), &\qquad  \text{if } (k,\eta)\in S_{t},\\
    \p_t  m(t,k,\eta)=0, &\qquad \text{otherwise},\\
    m(0,k,\eta)=1.
\end{cases}
\end{align}
This weight is used to absorb error terms that in the linear problem were integrable in time.

The standard weight is defined with $j=k$ and not only on $S_t$. However, for our problem the weight above is enough and it does simplify some proofs while keeping useful properties which are proved below.

To define the weights $J,\tilde J$ we introduce a minor modification of one of the main weights used in \cite{masmoudi2022stability} (the weight $\Theta$).
 Define the \textit{non-resonant} piece  as
\begin{align*}
    q_{NR}(t,\eta)&=\left(\tfrac { k^3}{ 2 \eta  }\left[1+ a_{k,\eta} \vert t-\tfrac \eta k \vert \right]\right)^{\rho +\tfrac 1 2   }q_{NR} (t_{k,\eta}^+,\eta) & \forall t&\in  \tilde I^R_{k,\eta },\\
    q_{NR}(t,\eta)&= \left(\left[1+ a_{k,\eta} \vert t-\tfrac \eta k \vert \right]\right)^{-\rho  }q_{NR}(\tfrac \eta k ,\eta) & \forall t&\in \tilde  I^L_{k,\eta }.
\end{align*}
The \textit{resonant} part is given by
\begin{align*}
q_R(t,\eta)&=\left(\tfrac { k^3}{ 2\eta  }\left[1+ a_{k,\eta} \vert t-\tfrac \eta k \vert \right]\right)^{-\frac 1 2 }q_{NR} (t,\eta )  &\forall t&\in   \tilde I_{k,\eta },
\end{align*}
with
\begin{align*}
    a_{k,\eta }&= 4(1-\tfrac 1 2 \tfrac {k^3} \eta ).
\end{align*}
On $ I^R_{k,\eta } \setminus \tilde I^R_{k,\eta } $ both weights are constant. We remark that these weights correspond to a modification of  $\Theta$ in \cite{masmoudi2022stability}, in particular  $\tfrac {q_{R}}{q_{NR}}= \tfrac {\sqrt{\Theta_{NR}}}{\sqrt{\Theta_{R}}}$.

Then, we define the weight
\begin{align}
\label{def:q}
    q(t, k,\eta ) =
    \begin{cases}
    q( t_\eta, \eta )& t<t_{\lfloor{ \eta }\rfloor^{\frac 1 3 } ,\eta},\\
    q_{NR}(t , \eta )&t\in [t_{\lfloor{ \eta }\rfloor^{\frac 1 3 },\eta} , 4\eta]\setminus   I_{k,\eta },  \\
    q_R(t, \eta )&t\in  I_{k,\eta }, \\
    1& t>2\eta. \\
    \end{cases}
\end{align}
Finally, we define $J,\tilde J$ as
\begin{align}
    \label{def:J}J(t,k,\eta)&= \frac {e^{8 \rho \vert \eta\vert^{\frac 1 3 }}} {q(t,k,\eta) } +  e^{8 \rho \vert k\vert^{\frac 1 3 }},\\
    \label{def:tJ}\tilde J(t,k,\eta)&= \frac {e^{8 \rho \vert \eta\vert^{\frac 1 3 }}} {q(t,k,\eta) }.
\end{align}
\subsubsection{Properties of the weights}
We now list some useful properties of the weights we defined above. For the weight $m$ we have the following.
\begin{lemma}
\label{lem:fullm1}
    Let $(k,\eta),(l ,\xi)\in \mathbb{Z}\times \mathbb{R}$. Then
    \begin{itemize}
    \item[i)] $1\le m\le \exp( \tfrac {\pi^3} {6} )$
    \item[ii)] If $k \neq 0$ and  $(l ,\xi)\in S_t$ then
       \begin{align*}
        \frac 1 {1+\vert t-\frac \eta k \vert}\lesssim \sqrt{\frac {\p_t m}{m}(t,l,\xi) }\langle  k-l,\eta-\xi \rangle ^3.
    \end{align*}
    \item[iii)] If $k \neq0$ and $\vert k,\eta\vert \le 2  \vert l,\xi\vert $, then
    \begin{align*}
        \frac {\vert k, \eta \vert } {k^2} \frac { 1 } {1+\vert t-\frac \eta k \vert^2 }\lesssim  \frac {\langle  \tfrac \eta {k^3}\rangle ^{\frac 1 2 } } {1+\vert t-\frac \eta k \vert }  \left(1+\sqrt { t}\sqrt{\frac {\p_t m }{m}  (t,l,\xi ) }\right)\langle k-l,\eta-\xi\rangle ^5  +\frac 1 {\langle t\rangle^2 }.\label{eq:mc1}
    \end{align*}
    \item[iv)] For all $t,\eta,\xi$ and $k \neq 0 $ it holds that
    \begin{align*}
    \vert m(t,k,\eta ) - m(t,k,\xi)\vert
    &\lesssim  \frac {\vert \eta-\xi\vert}{ |k|} .
\end{align*}
    \end{itemize}
\end{lemma}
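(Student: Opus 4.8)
The plan is to prove the four items directly from the ODE definition \eqref{eq:m1}, using the explicit integral representation $m(t,k,\eta)=\exp\left(\int_0^t \mathbf{1}_{S_\tau}(k,\eta)\,\sup_{j\neq 0}\left(\tfrac{10}{1+(\eta/j-\tau)^2}\tfrac{1}{\langle k-j\rangle^3}\right)\,\mathrm{d}\tau\right)$, where $\mathbf{1}_{S_\tau}(k,\eta)=\mathbf{1}_{|k,\eta|\le 10\tau^2}$.

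\emph{Item (i).} Since the integrand is nonnegative we immediately get $m\ge 1$. For the upper bound, bound the supremum crudely by $\sum_{j\in\Z\setminus\{0\}}\tfrac{10}{1+(\eta/j-\tau)^2}\tfrac{1}{\langle k-j\rangle^3}$, exchange sum and integral, and use $\int_{\R}\tfrac{10}{1+(\eta/j-\tau)^2}\,\mathrm{d}\tau=10\pi$; then $\sum_{j\neq 0}\langle k-j\rangle^{-3}\le\sum_{m\in\Z}\langle m\rangle^{-3}$. One needs this to sum to at most $\tfrac{\pi^2}{60}$ so that the product is $\tfrac{\pi^3}{6}$; this is a routine numerical check on $1+2\sum_{m\ge 1}(1+m^2)^{-3/2}$ (or one instead uses the cleaner bound $\sup_j\le 10\sum_j (1+(\eta/j-\tau)^2)^{-1}\langle k-j\rangle^{-3}$ and the standard estimate $\sum_j\langle k-j\rangle^{-3}\langle \text{dist}\rangle^{-1}$-type reasoning as in \cite{masmoudi2022stability}). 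I would just cite the analogous computation there and adjust constants.

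\emph{Item (ii).} Here $(l,\xi)\in S_t$ means $|l,\xi|\le 10t^2$, so in particular the ``on'' branch of \eqref{eq:m1} is active at time $t$ for the frequency $(l,\xi)$, hence $\tfrac{\p_t m}{m}(t,l,\xi)=\sup_{j\neq 0}\left(\tfrac{10}{1+(\xi/j-t)^2}\tfrac{1}{\langle l-j\rangle^3}\right)$. Choosing the single index $j=k$ gives the lower bound $\tfrac{\p_t m}{m}(t,l,\xi)\ge \tfrac{10}{1+(\xi/k-t)^2}\tfrac{1}{\langle l-k\rangle^3}$. It remains to compare $(1+|t-\eta/k|)^{-2}$ with $(1+|t-\xi/k|)^{-2}\langle k-l,\eta-\xi\rangle^{-6}$: since $|\eta/k-\xi/k|=|\eta-\xi|/|k|\le|\eta-\xi|\le\langle k-l,\eta-\xi\rangle$, a standard triangle-inequality manipulation $1+|t-\eta/k|\le \langle k-l,\eta-\xi\rangle(1+|t-\xi/k|)$ yields the claim after taking square roots, absorbing the numerical factor $\sqrt{10}$.

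\emph{Items (iii) and (iv).} Item (iv) is the easiest: with $k$ fixed, $m(t,k,\eta)-m(t,k,\xi)$ is estimated by the mean value theorem in the $\eta$ variable applied to $\int_0^t(\cdots)\mathrm{d}\tau$, where $\p_\eta$ of the integrand produces a factor $\tfrac{1}{|j|}\lesssim 1$ (for the maximizing $j$, together with $|j|\ge 1$) times an integrable kernel; more carefully one differentiates $\exp(\cdot)$, uses $0\le m\le e^{\pi^3/6}$ from (i), and bounds $\p_\eta\int_0^t\sup_j(\cdots)\mathrm{d}\tau$ by $\tfrac{1}{|k|}$ after noting the worst contribution comes from $j=k$ and $\int_\R\p_\eta\left(\tfrac{10}{1+(\eta/k-\tau)^2}\right)\mathrm{d}\tau$ has the $1/|k|$ scaling. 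Item (iii) is the main obstacle: one must balance the algebraic factor $\tfrac{|k,\eta|}{k^2}\tfrac{1}{1+|t-\eta/k|^2}$ against the $m$-dissipation. Split into the region $|k,\eta|\le 10t^2$ (so $(l,\xi)\in S_t$ too, using $|k,\eta|\le 2|l,\xi|$, after possibly enlarging the constant — one should check $10t^2$ vs.\ the constant in $S_t$, which is why the definition uses $t^2$ and a factor $10$) where one argues as in (ii) to extract $\sqrt{\p_t m/m(t,l,\xi)}$, paying an extra $\sqrt t$ because one power of $|t-\eta/k|^{-1}$ is traded via $\tfrac{1}{1+|t-\eta/k|^2}\le\tfrac{\sqrt{\langle\eta/k^3\rangle}}{1+|t-\eta/k|}\cdot(\text{something})\cdot\sqrt{\p_t m/m}$ near the resonant time $t\approx\eta/k$ where $t\lesssim\eta/k\lesssim$ ... and one keeps $\tfrac{|k,\eta|}{k^2}\lesssim\tfrac{|\eta|}{k^2}$ as the source of the $\langle\eta/k^3\rangle^{1/2}$-type factor; and the complementary region $|k,\eta|>10t^2$, where $\tfrac{|k,\eta|}{k^2}\tfrac{1}{1+|t-\eta/k|^2}$ is directly $\lesssim\langle t\rangle^{-2}$ because $|\eta|\gtrsim t^2$ forces $|t-\eta/k|\gtrsim t^2/|k|$ unless $|k|$ is large, in which case one uses $|k,\eta|\gtrsim t^2$ directly — this case analysis, and getting the powers of $\langle k-l,\eta-\xi\rangle$ and of $t$ to land exactly as stated, is the delicate bookkeeping; I expect it to mirror Lemma-level estimates in \cite{masmoudi2022stability} closely and would structure the write-up around that comparison.
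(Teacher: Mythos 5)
Your overall strategy for items (i), (ii) and (iv) matches the paper's: use the integral representation of $m$, bound the supremum by the sum, integrate the Lorentzian in time to pick up a $\pi$, pick $j=k$ in the supremum, and use a mean--value/triangle-inequality argument to compare the resonant times at $(k,\eta)$ and $(l,\xi)$. Your treatment of (ii) is in fact slightly cleaner than the paper's two-case justification of the analogue inequality. The main issue is in (iii).

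In your case split for (iii) you write that in the region $|k,\eta|\le 10t^2$ one has ``$(l,\xi)\in S_t$ too, using $|k,\eta|\le 2|l,\xi|$, after possibly enlarging the constant.'' This implication runs the wrong way: the hypothesis $|k,\eta|\le 2|l,\xi|$ is a \emph{lower} bound on $|l,\xi|$, whereas membership in $S_t$ requires the \emph{upper} bound $|l,\xi|\le 10t^2$. No choice of constant fixes this. The point matters because in the paper's applications (iii) is invoked with $(l,\xi)$ playing the role of the high frequency, so $(l,\xi)\in S_t$ is a genuinely nontrivial constraint. The paper's proof also applies item (ii) in this case without spelling out what happens when $(l,\xi)\notin S_t$; what saves the estimate there is that if $|k,\eta|\le\langle t\rangle^2$ while $|l,\xi|>10t^2$, then $|k-l,\eta-\xi|\gtrsim t^2$ (or, for $t\lesssim 1$, the $\langle t\rangle^{-2}$ term is of order one), so the factor $\langle k-l,\eta-\xi\rangle^5$ in the first term absorbs the left-hand side even though $\p_t m(t,l,\xi)=0$. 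You need to add this case rather than assert the false implication.

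For (iv) your sketch omits the boundary contribution that arises when the $\eta$-derivative hits the indicator $\mathbf 1_{|k,\eta|\le 10\tau^2}$ inside the time integral (equivalently, the $\eta$-dependence of the effective lower limit $\tau=\sqrt{|k,\eta|/10}$). The paper tracks this explicitly as a second term $M_2$ alongside the term you describe, and showing that $M_2\lesssim 1/|k|$ requires its own small case analysis (comparing $|k-j|$ to $|k|$ and $\eta/j$ to $t$). Also note that $\int_{\mathbb R}\p_\eta\bigl(\tfrac{10}{1+(\eta/k-\tau)^2}\bigr)\,d\tau=0$ by symmetry, so the $1/|k|$ scaling should be extracted from $\int_{\mathbb R}\bigl|\p_\eta(\cdots)\bigr|\,d\tau$, not from the signed integral as written.
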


The proof of this Lemma is postponed to the Appendix \ref{sec:appendix}.

For the weight $J$, since $q_R$ and $q_{NR}$ only differ by a power compared to the weight used in \cite{masmoudi2022stability}, it is straightforward to adapt the proofs of Lemmas 3.1, 3.4, 3.6 in \cite{masmoudi2022stability}  and obtain the following bounds on $q$.
\begin{lemma}
\label{lem:fullq}
Let $q$ be defined as in \eqref{def:q}. The following bounds holds true:
\begin{itemize}
    \item[i)] For $|\eta| >1$ and $k\in \mathbb{Z}$, there exists a $\mu>0$ such that
    \begin{align*}
    \frac {q(2\eta,k,\eta )}{q(0,k,\eta )}\approx \frac 1{\eta ^{\frac \mu{12} }}e^{\frac \mu 2 \vert \eta \vert^{\frac 1 3}}.
    \end{align*}
    \item[ii)]
        For $t\in \tilde  I_{k,\eta }$ and  we  obtain
        \begin{align*}
  \frac {\p_t q}q(t,\eta ) &\approx \frac 1 {1+\vert t-\frac \eta k \vert }  .
\end{align*}

    \item[iii)]  There exists a $\mu>0$ such that for all $t,\eta,\xi$
    \begin{align*}
    \frac {q_{NR}(t,\eta )}{q_{NR}(t,\xi )}\lesssim e^{\frac \mu 2 \vert \eta -\xi  \vert^{\frac 1 3}}.
    \end{align*}
\end{itemize}
\end{lemma}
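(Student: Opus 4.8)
The three bounds are the $q$-analogues of the estimates on the weight $\Theta$ from \cite{masmoudi2022stability} (its Lemmas~3.1, 3.4, 3.6), and the plan is to transfer those arguments while tracking the modified $\rho$-dependent constants. The only structural change is that on each resonant window the exponents attached to $\tfrac{k^3}{2\eta}\bigl[1+a_{k,\eta}|t-\tfrac\eta k|\bigr]$ are $\rho+\tfrac12$ on $\tilde I^R_{k,\eta}$, $-\rho$ on $\tilde I^L_{k,\eta}$, and $-\tfrac12$ for the passage $q_{NR}\to q_R$; this leaves the shape of the weight near a resonance unchanged. Two elementary facts are used throughout. First, the standing constraint $1\le k\le|\eta|^{1/3}$ forces $a_{k,\eta}=4(1-\tfrac{k^3}{2\eta})\in[2,4)$, so $a_{k,\eta}\approx 1$ uniformly, and the identity $1+a_{k,\eta}\tfrac{\eta}{2k^3}=\tfrac{2\eta}{k^3}$ shows that on $\tilde I_{k,\eta}$ the bracket $\tfrac{k^3}{2\eta}\bigl[1+a_{k,\eta}|t-\tfrac\eta k|\bigr]$ runs between $\tfrac{k^3}{2\eta}$ (at $t=\tfrac\eta k$) and $1$ (at $t=t_{k,\eta}^\pm$). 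Second, one checks directly from the definitions that $\tilde I_{k,\eta}\subset I_{k,\eta}$ (so $q=q_R$ on $\tilde I_{k,\eta}$), that $q_R$ and $q_{NR}$ agree at $t_{k,\eta}^\pm$ so $q$ is continuous in $t$, that the windows $\tilde I_{k,\eta}$, $1\le k\le\lfloor\eta^{1/3}\rfloor$, are pairwise disjoint, and that $q(\cdot,k,\eta)$ is constant off $\bigcup_k\tilde I_{k,\eta}$ inside $[0,2\eta]$, as well as on $[2\eta,\infty)$.

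For (ii) one unwinds the definition: on the left half of $\tilde I_{k,\eta}$ one has $q(t,\eta)=c_{k,\eta}\bigl[1+a_{k,\eta}|t-\tfrac\eta k|\bigr]^{-(\rho+\frac12)}$ and on the right half $q(t,\eta)=c'_{k,\eta}\bigl[1+a_{k,\eta}|t-\tfrac\eta k|\bigr]^{\rho}$, with $c_{k,\eta},c'_{k,\eta}$ independent of $t$, so that
\begin{align*}
  \frac{\p_t q}{q}(t,\eta)=\beta\,\frac{a_{k,\eta}}{1+a_{k,\eta}|t-\tfrac\eta k|},\qquad \beta\in\{\rho,\,\rho+\tfrac12\}.
\end{align*}
Since $\rho>0$ this is positive, and since $\beta\le\rho+\tfrac12$ and $a_{k,\eta}\in[2,4)$ it is comparable to $(1+|t-\tfrac\eta k|)^{-1}$ from both sides, which is (ii). For (i) one telescopes: reading off from the power-law form above and the bracket computation, running forward in time $q$ is multiplied across $\tilde I^L_{k,\eta}$ by $\bigl(\tfrac{2\eta}{k^3}\bigr)^{\rho+\frac12}$ and across $\tilde I^R_{k,\eta}$ by $\bigl(\tfrac{2\eta}{k^3}\bigr)^{\rho}$, and is constant in between, so
\begin{align*}
  \frac{q(2\eta,k,\eta)}{q(0,k,\eta)}=\prod_{k'=1}^{\lfloor\eta^{1/3}\rfloor}\Bigl(\frac{2\eta}{(k')^3}\Bigr)^{2\rho+\frac12}.
\end{align*}
Taking logarithms and using $\sum_{k'=1}^{K}\log\tfrac{2\eta}{(k')^3}=K\log(2\eta)-3\log(K!)$ with $K=\lfloor\eta^{1/3}\rfloor$, Stirling's formula yields $\sum_{k'\le K}\log\tfrac{2\eta}{(k')^3}=c_1\eta^{1/3}-\tfrac12\log\eta+O(1)$ for an explicit $c_1>0$, and hence $\tfrac{q(2\eta)}{q(0)}\approx\eta^{-\mu/12}e^{\mu|\eta|^{1/3}/2}$ for a suitable $\mu=\mu(\rho)>0$.

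Finally, (iii) is the $q$-analogue of \cite[Lemma~3.6]{masmoudi2022stability} and is the delicate point. If $|\eta-\xi|\ge\tfrac12\max(|\eta|,|\xi|)$, then $|\eta|^{1/3}+|\xi|^{1/3}\lesssim|\eta-\xi|^{1/3}$; combining this with the crude two-sided bound $q(0,\eta)\lesssim q_{NR}(t,\eta)\le1$ (the weight $q_{NR}(\cdot,\eta)$ being nondecreasing from $q(0,\eta)\approx\eta^{\mu/12}e^{-\mu\eta^{1/3}/2}$ up to $1$, by (i)) bounds the ratio by $q(0,\xi)^{-1}\lesssim e^{\mu\xi^{1/3}/2}\lesssim e^{\mu|\eta-\xi|^{1/3}/2}$ after enlarging $\mu$. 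If instead $|\eta-\xi|<\tfrac12\max(|\eta|,|\xi|)$, then $\eta\approx\xi$, and for the given $t$ one must compare the two products of window-factors accumulated up to $t$ for the parameter $\eta$ and for the parameter $\xi$: the ranges of resonant indices $k'$ with $\tilde I_{k',\eta}$ (resp. $\tilde I_{k',\xi}$) entirely before $t$ almost coincide, and one exploits the resulting cancellations — together with the partial factor of the window currently containing $t$ — and the inequalities $|\eta^{1/3}-\xi^{1/3}|\le|\eta-\xi|^{1/3}$ and $|\log(\eta/\xi)|\lesssim|\eta-\xi|^{1/3}$ (valid since $\eta\approx\xi$) to obtain the claimed bound. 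Since $q_{NR}$ differs from the building block of $\Theta$ only in the fixed exponents above, each step of \cite{masmoudi2022stability} carries over with only the $\rho$-dependent constants changed; the main obstacle — and the reason one follows \cite{masmoudi2022stability} closely rather than merely quoting it — is precisely this near regime of (iii), where the comparison is governed by the combinatorics of which resonant window contains $t$ for $\eta$ versus for $\xi$.
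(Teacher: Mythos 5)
Your proposal matches the paper's treatment: the paper itself offers no written proof of Lemma~\ref{lem:fullq}, but simply states that ``since $q_R$ and $q_{NR}$ only differ by a power compared to the weight used in \cite{masmoudi2022stability}, it is straightforward to adapt the proofs of Lemmas 3.1, 3.4, 3.6 in \cite{masmoudi2022stability}.'' You take precisely that route, and in fact you fill in the adaptation for (i) and (ii) explicitly, which is the most one can do without re-typesetting the core argument of \cite{masmoudi2022stability}. Your power-law form of $q$ on $\tilde I^L_{k,\eta}$ and $\tilde I^R_{k,\eta}$ is correct, as is the resulting logarithmic derivative for (ii); and you rightly put your finger on the one genuinely new point, namely that because the intervals are defined only for $1\le|k|\le|\eta|^{1/3}$ with the specific $a_{k,\eta}=4\bigl(1-\tfrac{k^3}{2\eta}\bigr)\in[2,4)$, the coefficient is uniformly bounded above and below, so $\p_t q/q$ is comparable to $(1+|t-\eta/k|)^{-1}$ without the usual degeneracy near $|k|\approx|\eta|^{1/3}$ — this is exactly the ``technical advantage'' the paper emphasizes in the remark following the lemma. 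For (i), the telescoping product across windows plus Stirling is the intended argument. For (iii), your split into $|\eta-\xi|\gtrsim\max(|\eta|,|\xi|)$ (handled crudely via the two-sided bound from (i)) and the near regime $\eta\approx\xi$ (deferred to the window-matching argument of \cite{masmoudi2022stability}) is the standard structure.

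Two small cautions, neither affecting the validity of the approach. First, your parenthetical claim that the windows $\tilde I_{k,\eta}$ are pairwise disjoint is false at the boundary: $\tilde I_{1,\eta}=[\eta/2,3\eta/2]$ and $\tilde I_{2,\eta}=[\eta/2-\eta/16,\eta/2+\eta/16]$ overlap. What is true, and what your argument actually uses, is that $\tilde I_{k,\eta}\subset I_{k,\eta}$, so that for $t\in\tilde I_{k,\eta}$ the definition \eqref{def:q} with parameter $k$ puts you in the resonant branch $q=q_R$; you should phrase it this way. Second, in (i) the coefficient of the exponential you obtain is $(2\rho+\tfrac12)(3+\log 2)$ while the coefficient of $\log\eta$ is $(2\rho+\tfrac12)/2$, whose ratio is $6+2\log 2$ rather than the ratio $6$ implicit in the lemma's formula $\eta^{-\mu/12}e^{\mu|\eta|^{1/3}/2}$; this small mismatch traces to the factor $2\eta$ rather than $\eta$ in the definition of $q_{NR}$ and is an imprecision of the stated lemma rather than of your computation, so it is worth flagging but not worth fixing here.
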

\begin{remark}
    We remark that for property ii) in \cref{lem:fullq}, it is usually assumed that $t\geq \lfloor{|\eta|^s}\rfloor$, e.g. \cite{Bedrossian15}, where $s$ is the Gevrey regularity associated to the weight. This is because $\p_tq$ can go to zero as $k$ is close to  $|\eta|^s$ in view of the coefficients $a_{k,\eta}$ (and $b_{k,\eta}$ in \cite{Bedrossian15}) involved in the definition of the resonant and non-resonant part. In our case, we defined the weight in a slightly different way so to have the technical advantage that now $|a_{k,\eta}|\gtrsim 1$ for any $k,\eta$ of interest and therefore also $\p_t q$ enjoys the desired property.
\end{remark}

Thanks to the properties of $q$, we finally get the following bounds for $J$ and $\tilde J$.
\begin{lemma}
\label{lem:fullJ}
    Let $J$ and $ \tilde J$, be defined as in \eqref{def:J} and \eqref{def:tJ} respectively. Then, the following bounds holds true:
    \begin{itemize}
        \item[i)] For all $t,k,\eta,l ,\xi$ we have
            \begin{align*}
        \frac { J(t,k,\eta )}{J(t,l ,\xi )}\lesssim \left(1 + \textbf{1}_{I_{l ,\xi} } \frac {(\tfrac {|\xi|} {|l|^3})^{\frac 12 }  }{(1+\vert t-\frac \xi l\vert)^{\frac 1 2 } }\right ) e^{10\rho \vert k-l ,\eta-\xi\vert^{\frac 1 3 } }.
    \end{align*}
\item[ii)] If $t\in \tilde I_{k,\eta } \cap \tilde I_{l,\xi }^c $ then
    \begin{align*}
        \frac { J(t,k,\eta )}{J(t,l,\xi )}\lesssim \left(\tfrac  {|k|^3}{ |\eta|}\right)^{\frac 12 }    (1+\vert t-\tfrac \eta k \vert )^{\frac 12  }e^{10\rho  \vert k-l ,\eta-\xi\vert^{\frac 1 3 } }.
    \end{align*}
        \item[iii)]
    If $t\in \tilde I_{k,\eta }^c \cap \tilde I_{l,\xi }$ then
    \begin{align*}
        \frac { J(t,k,\eta )}{J(t,l ,\xi )}\lesssim    \frac{(\tfrac {|\xi|} {|l|^3})^{\frac 12 }}{(1+\vert t-\frac \xi l\vert)^{\frac 1 2 } }e^{10\rho \vert k-l ,\eta-\xi\vert^{\frac 1 3 } }
    \end{align*}
   \item[iv)] If $t\in \tilde I_{k,\eta } \cap \tilde I_{l,\xi } $ then
    \begin{align*}
        \frac { J(t,k,\eta )}{J(t,l ,\xi )}\lesssim \left(\frac  {|\xi k^3|} {|\eta l^3|}\right)^{\frac 12 }    \frac {(1+\vert t-\tfrac \eta k \vert )^{\frac 12  }} {(1+\vert t-\frac \xi l\vert)^{\frac 1 2 } }e^{10\rho  \vert k-l ,\eta-\xi\vert^{\frac 1 3 } }.
    \end{align*}
   \item[v)] If ( $t\in  \tilde I_{k,\eta }^c \cap \tilde I_{l,\xi }^c$ ) or ({ $k=l$} and $\eta \approx \xi $ ) then
    \begin{align*}
        \frac { J(t,k,\eta )}{J(t,l ,\xi )}\lesssim e^{10\rho  \vert k-l ,\eta-\xi\vert^{\frac 1 3 } }.
    \end{align*}
    \item[vi)] If $t \le \tfrac 1 2 \min \{|\eta|^{\frac 2 3 }, |\xi|^{\frac 2 3 }\}$, then there exists $C>0$ such that
    \begin{align*}
        \left \vert \frac {J(k,\eta )}{J(l,\xi )} -1 \right\vert \lesssim \frac {\langle k-l,\eta-\xi\rangle }{\vert k,l,\eta , \xi \vert }e^{C \rho \vert k-l,\eta-\xi\vert^{\frac 1 3 }}
    \end{align*}
    \item[vii)] If $4|\eta| \le \vert l\vert $, then
        \begin{align*}
    \left\vert \frac {J(k,\eta ) }{J(l,\xi ) }-1\right \vert &\lesssim  \frac {\langle k-l\rangle }{\vert k \vert^{\frac 23 } }e^{8\rho  \vert k-l\vert^{\frac 1 3 }}.
\end{align*}
\item[viii)]     Let $t\geq \min(|\xi|^\frac23,|\eta|^\frac23)$ and $|\eta|\geq C_1 |k|$ with $C_1\geq 4$, then we obtain the bound
    \begin{align*}
    \frac{|J(t,k,\eta)-J(t,k,\xi)|}{J(t,k,\xi)}\lesssim \vert\eta-\xi\vert \big( \frac{1}{|k|^{\frac 1 3}} + \sqrt{\tfrac {\p_t q }q (t,k,\eta )}\big)e^{C|\eta-\xi|^\frac13}.
\end{align*}
    \end{itemize}
    Furthermore, the estimates i)-v) hold with $J$ replaced by $\tilde J $

\end{lemma}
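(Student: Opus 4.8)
The plan is to reduce every one of the eight bounds to the facts about $q$ recorded in \cref{lem:fullq} together with the structural properties of the Masmoudi--Zhao weight $\Theta$ established in \cite[Lemmas 3.1, 3.4, 3.6]{masmoudi2022stability}. Since $q_{NR},q_R$ differ from the corresponding pieces of $\Theta$ only in the exponents — recall $\tfrac{q_R}{q_{NR}}=\sqrt{\Theta_{NR}/\Theta_R}$, so that on each interval $\tilde I_{k,\eta}$ one has the exact identity $\tfrac{q(t,k,\eta)}{q_{NR}(t,\eta)}=\big(\tfrac{k^3}{2\eta}[1+a_{k,\eta}|t-\tfrac\eta k|]\big)^{-1/2}\approx \big(\tfrac{|\eta|}{|k|^3}\big)^{1/2}(1+|t-\tfrac\eta k|)^{-1/2}$ (using $a_{k,\eta}\approx 1$), while $q(t,k,\eta)=q_{NR}(t,\eta)$ off $\tilde I_{k,\eta}$ up to a constant — the MZ proofs transfer verbatim modulo this book-keeping. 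Throughout I will use that $q\lesssim 1$, that $1/q$ is $t$-independent before the first resonant time $\sim|\eta|^{2/3}$ and equals $1$ for $t\ge 2|\eta|$, and that, by \eqref{def:J}--\eqref{def:tJ}, $J=\tilde J+e^{8\rho|k|^{1/3}}$ with $\tilde J=e^{8\rho|\eta|^{1/3}}/q$.

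For i)--v) I would first prove the bounds with $J$ replaced by $\tilde J$. Writing
\[
\frac{\tilde J(t,k,\eta)}{\tilde J(t,l,\xi)}=e^{8\rho(|\eta|^{1/3}-|\xi|^{1/3})}\,\frac{q(t,l,\xi)}{q_{NR}(t,\xi)}\cdot\frac{q_{NR}(t,\xi)}{q_{NR}(t,\eta)}\cdot\frac{q_{NR}(t,\eta)}{q(t,k,\eta)},
\]
I bound the first factor by $e^{8\rho|\eta-\xi|^{1/3}}$ by subadditivity of $x\mapsto x^{1/3}$, the middle factor by $e^{(\mu/2)|\eta-\xi|^{1/3}}$ via \cref{lem:fullq}~iii), and then distinguish the four cases according to whether $t$ lies in $\tilde I_{k,\eta}$ and/or $\tilde I_{l,\xi}$: the last factor equals $1$ off $\tilde I_{k,\eta}$ and equals $\big(\tfrac{|k|^3}{|\eta|}\big)^{1/2}(1+|t-\tfrac\eta k|)^{1/2}\le 1$ on it; the factor $q(t,l,\xi)/q_{NR}(t,\xi)$ equals $1$ off $I_{l,\xi}$ and is $\lesssim\big(\tfrac{|\xi|}{|l|^3}\big)^{1/2}(1+|t-\tfrac\xi l|)^{-1/2}$ on it. Matching these against the statements gives v) (both factors $\approx 1$; the case $k=l,\ \eta\approx\xi$ is immediate from \cref{lem:fullq}~iii)), ii) ($k$-factor explicit, $l$-factor $=1$), iii) ($k$-factor $=1$, $l$-factor explicit), iv) (both explicit), and i) as the common envelope, keeping only the $l$-resonant factor since the $k$-resonant factor is $\le 1$ and only helps. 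Choosing $\rho_0,\lambda_0$ so that the intrinsic constant $\mu=\mu(\rho)$ of \cref{lem:fullq} satisfies $\mu\le 4\rho$, the exponents combine to $\le 10\rho|k-l,\eta-\xi|^{1/3}$. Finally $J=\tilde J+e^{8\rho|k|^{1/3}}$ is handled via $\tfrac{a+b}{c+d}\le\max(\tfrac ac,\tfrac bd,\tfrac ad,\tfrac bc)$: the term $e^{8\rho|k|^{1/3}}/e^{8\rho|l|^{1/3}}\le e^{8\rho|k-l|^{1/3}}$ is trivial, and the mixed terms are absorbed using $q\lesssim 1$ and $|k|^{1/3}\le|\eta|^{1/3}$ on the support of the $q$-dependence. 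Since this last step never touches the $1/q$-part, the same computation with $e^{8\rho|k|^{1/3}}$ dropped yields the ``furthermore'' statement for $\tilde J$.

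For vi)--viii) I would adapt the commutator estimate \cite[Lemma 3.6]{masmoudi2022stability}. For vi), if $t\le\tfrac12\min(|\eta|^{2/3},|\xi|^{2/3})$ then $t$ precedes the first resonant time for both frequencies, so by \eqref{def:q} both $q(t,k,\eta)$ and $q(t,l,\xi)$ are $t$-independent and given by a product over the $O(|\eta|^{1/3})$, resp. $O(|\xi|^{1/3})$, resonant intervals; estimating $\partial_\eta\log q$ by summing the per-interval contributions $O\big(\tfrac1{|k|}\tfrac1{1+a_{k,\eta}|t-\eta/k|}\big)$ and likewise for the $e^{8\rho|k|^{1/3}}$ summand yields $\big|\tfrac{J(k,\eta)}{J(l,\xi)}-1\big|\lesssim\tfrac{\langle k-l,\eta-\xi\rangle}{|k,l,\eta,\xi|}e^{C\rho|k-l,\eta-\xi|^{1/3}}$. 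For vii), $4|\eta|\le|l|$ forces any resonant interval $I_{l,\xi}$ to sit at times $\gtrsim|l|^{2/3}\gg|\eta|$, so $q(t,l,\xi)$ has reached its terminal value while $q(t,k,\eta)$ is $1$ or frozen, and the $|k|^{-2/3}$ comes from the same counting restricted to frequency $k$. For viii), with $k$ fixed and $|\eta|\ge C_1|k|$, one differentiates $q(t,k,\eta)$ in $\eta$: on $\tilde I_{k,\eta}$ (of width $\sim|\eta|/|k|^3$) the chain rule together with \cref{lem:fullq}~ii) produces the factor $\sqrt{\partial_t q/q(t,k,\eta)}$, while off $\tilde I_{k,\eta}$ the slow variation gives the $|k|^{-1/3}$ term; adding the two yields the stated bound.

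The computations are standard once the weights are in place, so the genuine work — and the only real obstacle — is the book-keeping: tracking how the exponent change relative to \cite{masmoudi2022stability} alters the resonant prefactors so that they come out as precisely the square roots $\big(\tfrac{|k|^3}{|\eta|}\big)^{1/2}(1+|t-\tfrac\eta k|)^{1/2}$ (resp. with $(l,\xi)$) appearing in i)--iv) rather than some other power, and ensuring that every occurrence of $\mu=\mu(\rho)$ and of the exponent $8\rho$ in $J,\tilde J$ recombines into the advertised exponents $10\rho|\cdot|^{1/3}$ (resp. $C\rho$), which imposes only the mild constraint $\mu(\rho)\le 4\rho$ fixed by the choice of $\rho_0,\lambda_0$.
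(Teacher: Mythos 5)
Your outline for i)--v) is essentially right and matches what the paper means by ``a consequence of the construction'': the decomposition
\[
\frac{\tilde J(t,k,\eta)}{\tilde J(t,l,\xi)}=e^{8\rho(|\eta|^{1/3}-|\xi|^{1/3})}\cdot\frac{q(t,l,\xi)}{q_{NR}(t,\xi)}\cdot\frac{q_{NR}(t,\xi)}{q_{NR}(t,\eta)}\cdot\frac{q_{NR}(t,\eta)}{q(t,k,\eta)}
\]
with subadditivity of $x\mapsto x^{1/3}$ for the first factor, \cref{lem:fullq}~iii) for the middle, and a case split on the membership of $t$ in $\tilde I_{k,\eta}$ and $\tilde I_{l,\xi}$ for the outer two, is the right move; the bookkeeping to absorb $\mu$ and $8\rho$ into $10\rho$ and to handle the $e^{8\rho|k|^{1/3}}$ summand in $J$ is also as you describe. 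Two small imprecisions: off $\tilde I_{k,\eta}$ but inside $I_{k,\eta}$ the ratio $q/q_{NR}$ is merely $\approx 1$, not $=1$ (both pieces are frozen there, not identified), and on $\tilde I_{k,\eta}$ the factor $\big(\tfrac{|k|^3}{|\eta|}\big)^{1/2}(1+|t-\tfrac\eta k|)^{1/2}$ is $\lesssim 1$ rather than $\le 1$ pointwise. Neither affects the conclusion.

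Your vi) sketch is aligned with the intended adaptation of~\cite[Lemma~3.9]{masmoudi2022stability}: for $t\le\tfrac12\min(|\eta|^{2/3},|\xi|^{2/3})$ the weight $q$ is frozen at its value at the first resonant time and one differentiates the resulting product in $\eta$. The vii) sketch, however, is off: it takes the written hypothesis $4|\eta|\le|l|$ at face value and then reasons about resonant intervals $I_{l,\xi}$ sitting at times $\gtrsim|l|^{2/3}$, but the resonant times for $q(\cdot,l,\xi)$ are $\gtrsim|\xi|^{2/3}$ (not $|l|^{2/3}$), and they do not exist at all once $|l|\gtrsim|\xi|^{1/3}$. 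Moreover, the way vii) is actually used in the paper (in the bounds on $T_{G\to\phi,1}$ and $T_{v_0\to\phi,1}$) is with $|\eta|\le 4|k|$, under which $|k|\gtrsim|\eta|^{1/3}$ forces all resonant intervals for $(k,\eta)$ to be empty and the entire $(\eta,\xi)$-dependence to collapse, which is how the right-hand side can depend only on $k$, $l$. Your argument doesn't reach this.

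For viii) you have the right strategy in outline (split off the exponential, differentiate $q$ in $\eta$, resonant versus non-resonant regime), but the sketch compresses away the issues that carry the actual work: $\partial_\eta q_{NR}$ has jump discontinuities across the resonant intervals, so the mean-value-theorem step has to be justified as a Lipschitz estimate; the reduction from $q_R$ to $q_{NR}$ goes through the explicit identity \eqref{eq:petaqR} (requiring $|k|\lesssim|\eta|^{1/3}$ to control the extra terms); and the factor $\sqrt{\partial_t q/q}$ does not come out of ``chain rule plus \cref{lem:fullq}~ii)'' directly, but rather from bounding $\tfrac{1}{1+|t-\xi'/j|}$ by $\langle\eta-\xi\rangle\big(\sqrt{\partial_t q/q}(\eta)+\tfrac{|j|^3}{|\eta|}\big)$ after writing $q_{NR}$ as a product over resonant blocks and controlling $\partial_\eta\log q_{NR}$ term by term. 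These are exactly the details the paper writes out, and the hypothesis $|\eta|\ge C_1|k|$ with $C_1\ge4$ is what ensures the needed resonant structure is present; your sketch doesn't use it.
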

The proofs of most properties in this lemma are by now standard in the inviscid damping type problems as in \cite{Bedrossian15,masmoudi2022stability,bedrossian2023nonlinear}. They are all essentially based on \cite{Bedrossian15}. The only new estimate we need is the commutator-type estimate viii) above, which we prove in the appendix.

\subsection{Nonlinear estimates}
The goal of this section is to prove Proposition \eqref{prop:NLbounds}. We proceed by controlling each of the nonlinear terms defined in \eqref{def:NLphitoG}-\eqref{def:NLG} separately. We will use throughout this section the notation and the results of Section \ref{sec:weights}.
To ease the notation, we avoid writing the explicit $t$ dependence on  our unknowns and the weights.
\subsubsection{\textbf{Bound on the nonlinear action of $\phi$ on $G$}}
We aim at proving  \eqref{bd:NLphiG}.
First of all, we split the term $NL_{\phi \to G}$ defined in \eqref{def:NLphitoG} as follows
\begin{align*}
NL_{\phi\to G}
    &= \langle AG_{\neq} , \Delta^{-1}_t \p_y^tA( (\nabla^\perp \phi  \nabla) \p_y^t \phi )_{\neq} \rangle+ \langle AG_{\neq} , \Delta^{-1}_t \p_xA( (\nabla^\perp \phi  \nabla )\p_x \phi )_{\neq} \rangle\\
    &=:NL_{\phi\to G}^y+NL_{\phi\to G}^x.
\end{align*}
The most difficult term to bound is $NL_{\phi\to G}^y$ (due to the the time-dependent spatial derivatives) and therefore we estimate it in detail. The bounds for $NL_{\phi\to G}^x$ can be easily deduced from the ones we show for  $NL_{\phi\to G}^y$  and we thus omit the details for this term.

By Parseval's theorem, we infer
\begin{align*}
    \vert NL_{\phi\to G}^y\vert &\le\sum_{ \substack{k,l \\
    k\neq 0} }\iint  \left(
    \textbf{1}_{S_T}+\textbf{1}_{S_R}+\textbf{1}_{S_\calR}\right)\frac {\vert (\eta -kt )(\xi-lt)(\eta l-k\xi) \vert }{k^2 +(\eta -kt )^2 }  A(k,\eta)   \\
    &\hspace{2cm}\times \vert AG\vert (k,\eta) \vert \phi\vert (l,\xi) \vert \phi\vert (k-l ,\eta -\xi)  d\xi d \eta,\\
   & =: \widetilde{T}_{\phi\to G}+R_{\phi \to G}+\mathcal{R}_{\phi\to G},
\end{align*}
where we recall the definition of the transport, reaction and reminder sets in \eqref{def:SR},  \eqref{def:ST} and \eqref{def:ScalR} respectively.

We first rewrite the integral concentrated on the transport set as follows: since $\xi-lt=\eta-kt- (\eta -\xi-(k-l)t)$, by the triangle inequality we have
\[
\vert (\eta -kt )(\xi-lt) \vert  \leq  (\eta -kt )^2+\vert (\eta-\xi -(k-l )t )(\eta-kt)\vert.
\]
Then, with change of variables $\tilde{\xi}=\eta -\xi$ and $\tilde{l}=k-l$, notice that
\begin{align*}
    \eta -\xi-(k-l)t&= \tilde \xi -\tilde k t, \\
    \eta l-k\xi&=\eta (k-\tilde l ) - k(\eta-\tilde \xi) =-( \eta\tilde l-k\tilde \xi ),\\
    \textbf{1}_{S_T }(k,\eta, l,\xi) &= \textbf{1}_{S_R }(k,\eta,\tilde  l,\tilde \xi) .
\end{align*}
Thus, thanks to the observations above, we get that
\begin{align*}
    \widetilde{T}_{\phi \to G}&\leq \sum_{ \substack{k,l  \\ k \neq 0 }}\iint \textbf{1}_{S_T}  \frac {\vert (\eta -kt )^2(\eta l-k\xi) \vert }{k^2 +(\eta -kt )^2 }  A(k,\eta)  \vert AG\vert (k,\eta) \vert \phi\vert (l,\xi) \vert \phi\vert (k-l ,\eta -\xi) d\xi d\eta  \\
    &\hspace{-.5cm} +\sum_{ \substack{k,l  \\ k \neq 0 }}\iint\textbf{1}_{S_R}\frac { \vert (\tilde\xi-\tilde l t )(\eta -kt )( \eta\tilde l-k\tilde \xi )\vert  }{k^2 +(\eta -kt )^2 } A(k,\eta)  \vert AG\vert (k,\eta) \vert \phi\vert (k-\tilde l,\eta-\tilde \xi) \vert \phi\vert(\tilde l ,\tilde \xi)  d\tilde \xi d\eta \\
    &=: T_{\phi \to G}+R_{\phi \to G},
\end{align*}
where the term $R_{\phi \to G}$ is exactly as the one defined previously upon the identification of $\tilde\xi $ with $\xi$.
Consequently
\[
\vert NL_{\phi\to G}^y\vert\leq 2R_{\phi \to G}+  T_{\phi\to G}+\mathcal{R}_{\phi\to G}.
\]
In the sequel, we aim at showing the following bounds for each term on the right-hand side of the inequality above:
\begin{align}
    \label{bd:finRphitoG}
R_{\phi \to G}&\lesssim t \Vert A\nabla_t  G \Vert_{L^2}\Vert A \phi \Vert_{L^2}\left(\Vert A \phi \Vert_{L^2}+t^\frac12 \Vert \tilde A \sqrt{\tfrac {\p_t q }q}\phi  \Vert_{L^2}+t^\frac12\Vert  A \sqrt{\tfrac {\p_t m }{m}} \phi  \Vert_{L^2}\right),\\
\label{bd:finTphitoG}     T_{\phi\to G} &\lesssim  t \Vert A\nabla_t  G \Vert_{L^2} \Vert A\phi\Vert_{L^2}\left(t^\frac12 \Vert \tilde A\sqrt{\tfrac {\p_t q } q} \phi\Vert_{L^2}+\Vert A\phi\Vert_{L^2}\right),\\
\label{bd:fincalRphitoG} \mathcal{R}_{\phi \to G}&\lesssim \langle t \rangle^{-1} \Vert A \nabla_t   G \Vert_{L^2} \Vert A\phi\Vert_{L^2}^2.
\end{align}
With the bounds above at hand, we can directly prove a bound consistent with \eqref{bd:NLphiG}. Indeed
\begin{equation*}
    R_{\phi \to G}+T_{\phi \to G}+\calR_{\phi \to G}\lesssim t\eps\sqrt{\mathcal{D}_{G,\phi}}+ t^\frac32\eps\mathcal{D}_{G,\phi}.
\end{equation*}
Hence, integrating in time and using \eqref{bootstrap}, since $t\leq T\leq \delta \eps^{-3/2}$ we obtain
\begin{align*}
    \int_0^T \vert NL_{\phi\to G }^y\vert d\tau  &\lesssim T^\frac32 \eps^3\leq C\delta\eps^2,
\end{align*}
for a suitable constant $C>0$.

It thus remains to prove the bounds \eqref{bd:finRphitoG}-\eqref{bd:fincalRphitoG}, which is accomplished in the rest of this section.

\subsubsection*{$\bullet$ \textbf{Bound on $R_{\phi \to G}$}} Denoting the integrand of $R_{\phi \to G}$ as
\begin{align}
\label{def:calIRphiG}
    \calI_{R, \phi\to G} &:= \frac {\vert (\eta -kt )(\xi-lt)(\eta l-k\xi) \vert  }{k^2 +(\eta -kt )^2 } A(k,\eta) \vert AG\vert(k,\eta) \vert \phi\vert (l,\xi) \vert \phi\vert (k-l ,\eta -\xi)\
\end{align}
we split the reaction term according to
\begin{align*}
    &\sum_{\substack{k,l  \\ k \neq 0} }\iint \textbf{1}_{S_R} \left(\textbf{1}_{\tilde I_{k,\eta }\cap \tilde I_{k-l,\eta-\xi }^c}+\textbf{1}_{\tilde I_{k,\eta }^c\cap \tilde I_{k-l,\eta-\xi }^c} + \textbf{1}_{\tilde I_{k,\eta }\cap \tilde I_{k-l,\eta-\xi }}+ \textbf{1}_{\tilde I_{k,\eta }^c\cap \tilde I_{k-l,\eta-\xi }} \right)  \calI_{R, \phi\to G}\, d\xi d\eta \\
    &=:R_{\phi\to G }^{R,NR}+R_{\phi\to G }^{NR,NR}+R_{\phi\to G }^{R,R}+R_{\phi\to G }^{NR,R}.
\end{align*}
On $S_R$, where $(k-l,\eta-\xi)$ are the high frequencies, thanks to Lemma \ref{lem:useest} iii) we get
\begin{align}
    e^{\lambda(t) \vert k,\eta\vert^s}&\le e^{\lambda(t) \vert k-l,\eta-\xi\vert^s} e^{c_s\lambda(t) \vert l,\xi\vert^s}.\label{eq:Rexp}
\end{align}

\medskip
\noindent
\textit{$\diamond$ Bound on  $R_{\phi\to G }^{R,NR}$:} For this term, the support of the integral is $S_R\cap \tilde I_{k,\eta }\cap \tilde I_{k-l,\eta-\xi }^c$. By Lemma \ref{lem:fullm1} iii) with $l,\xi$ replaced by $k-l,\eta-\xi$,  we deduce that
\begin{align*}
    \frac {\vert (\xi-lt)(\eta l-k\xi) \vert }{k^2 +(\eta -kt )^2 }&\le t  \frac {\vert \eta, k \vert } {k^2} \tfrac 1 {1 +(\frac\eta k -t )^2}\vert l , \xi \vert^2 \\
    &\lesssim t  \frac { \langle \tfrac \eta{k^3}\rangle^{\frac 1 2 }} {1+\vert t-\frac \eta k \vert }  \left(1+\sqrt { t}\sqrt{\tfrac {\p_t m }{m}  (k-l,\eta-\xi )}\right)\vert l,\xi\vert^5+\frac 1{\langle t \rangle^2} \\
    &\lesssim \langle t\rangle   \frac { \langle \tfrac \eta{k^3}\rangle^{\frac 1 2 }} {1+\vert t-\frac \eta k \vert }  \left(1+\sqrt { t}\sqrt{\tfrac {\p_t m }{m}  (k-l,\eta-\xi )}\right)\vert l,\xi\vert^5,
\end{align*}
where on $\tilde I_{k,\eta }$ we absorbed the $\tfrac 1{\langle t \rangle^2}$ by $\frac { \langle \tfrac \eta{k^3}\rangle^{\frac 1 2 }}{1+\vert t-\frac \eta k \vert }\ge 1 $.
Then, by Lemma \ref{lem:fullJ} ii), we know that on $\tilde  I_{k,\eta}\cap \tilde I_{k-l ,\eta-\xi}^c $ we have
\begin{align*}
        \frac { J(k,\eta )}{J(k-l ,\eta - \xi )}\lesssim \frac  {|k|^{\frac 32} }{|\eta|^{\frac 1 2 }}   (1+\vert t-\tfrac \eta k \vert )^{\frac 1 2 }e^{20\rho  \vert l ,\xi\vert^{\frac 1 3 } }\vert l,\xi\vert^2 .
    \end{align*}
Therefore, by combining these estimates, using that on $ \tilde I_{k,\eta}$ we have $|\eta/k^3|\ge 1 $, and exploiting \eqref{eq:Rexp},  we infer
\begin{align*}
   & \frac {\vert (\eta -kt )(\xi-lt)(\eta l-k\xi) \vert }{k^2 +(\eta -kt )^2 } A(k,\eta)\\
    &\qquad \le \vert k\vert^{\frac 1 2 }  |\eta-kt|^\frac12\left(t+ {t^{\frac 32 } } \sqrt{\tfrac {\p_t m}{m} (k-l,\eta-\xi)}\right)  A(k-l,\eta-\xi) \vert l , \xi \vert^7 e^{\lambda(t)  \vert l ,\xi\vert^s }.
\end{align*}
Thus,  we deduce
\begin{align}
\label{bd:finRRNR}
     R_{\phi\to G }^{R,NR}  &\lesssim t \Vert A \nabla_t G \Vert_{L^2}\Vert A \phi \Vert_{L^2}\left( \Vert A\phi\Vert_{L^2}+ t^{\frac 1 2 }  \Vert \sqrt{\tfrac{\p_t m} {m }  }  A \phi \Vert_{L^2 }\right),
\end{align}
which is consistent with \eqref{bd:finRphitoG}.

\medskip
\noindent
\textit{$\diamond$ Bound on  $R_{\phi\to G }^{NR,NR} $: } We now have that the support of the integral is   $S_R\cap \tilde I_{k,\eta }^c\cap\tilde I_{k-l,\eta -\xi}^c$. Recalling the definition of \eqref{def:calIRphiG}, we first observe that
\begin{align*}
    \calI_{R,\phi\to G}=\frac {\vert (\eta -kt )(\xi-lt)(\eta l-k\xi) \vert  }{(k^2 +(\eta -kt )^2)^\frac32 } A(k,\eta) \vert \Lambda_tAG\vert(k,\eta) \vert \phi\vert (l,\xi) \vert \phi\vert (k-l ,\eta -\xi).
\end{align*}
Then
\begin{align*}
    \frac {|(\eta -kt )(\xi-lt)(\eta l-k\xi) |}{(k^2 +(\eta -kt )^2)^{\frac 32 } }\lesssim \frac {t\vert\eta, k\vert  }{k^2 +(\eta -kt )^2 } \vert l ,\xi\vert^2
\end{align*}
Combining Lemma \ref{lem:fullm1} iii) with the fact that
 on $\tilde I_{k-l,\eta-\xi}^c$ we have $\langle  t-\tfrac \eta k \rangle  \gtrsim \langle \tfrac \eta {k^3} \rangle  $, we deduce that
\begin{align*}
    \frac {\vert \eta,k \vert } {k^2} \frac { 1 } {1+ (\frac \eta k-t )^2 }\lesssim    \left(1+\sqrt { t}\sqrt{\tfrac {\p_t m }{m}  (k-l,\eta-\xi ) }\right)\vert l,\xi\vert^3 .
\end{align*}
Moreover, using Lemma \ref{lem:fullJ} v), we infer
\begin{align*}
    J(k,\eta ) &\lesssim J(l,\xi  )J(k-l,\eta-\xi  ) e^{20 \rho \vert l,\xi\vert^{s}}.
\end{align*}
Therefore, combining these estimates and using \eqref{eq:Rexp} we get
\begin{align*}
    \calI_{R, \phi\to G} &\lesssim t  \left(1+\sqrt { t}\sqrt{\tfrac {\p_t m }m  (k-l,\eta-\xi ) }\right)\langle l,\xi\rangle^{-2}  \vert AG\vert(k,\eta) \vert  A \phi\vert (l,\xi) \vert A \phi\vert (k-l ,\eta -\xi),
\end{align*}
from which we deduce
\begin{align}
\label{bd:finNRNRR}
    \vert R_{\phi\to G }^{NR,NR} \vert &\lesssim t \Vert A \nabla_t G \Vert_{L^2}\Vert A \phi \Vert_{L^2}\left( \Vert A\phi\Vert_{L^2}+ t^{\frac 1 2 }  \Vert \sqrt{\tfrac{\p_t m} {m }  }  A \phi \Vert_{L^2 }\right),
\end{align}
which is consistent with \eqref{bd:finRphitoG}.

\medskip
\noindent
\textit{$\diamond$ Bound on  $R_{\phi\to G }^{R,R}$:}
For this term, recalling the definition of $\calI_{R,\phi \to G}$ we first observe that
\[
|\eta-kt||AG|(k,\eta)\leq |A \nabla_tG|(k,\eta).
\]
Then, in the support of the integral, that is  $S_R\cap\tilde I_{k,\eta}\cap\tilde I_{k-l,\eta-\xi}$, by Lemma \ref{lem:fullJ} iv) we infer
\begin{align*}
        \frac { J(k,\eta )}{J(k-l ,\eta - \xi )}\lesssim  \left(\frac {1+\vert t-\frac \eta k \vert }{1+\vert t-\frac {\eta-\xi} {k-l } \vert}\right )^{\frac 1 2} e^{10\rho  \vert l ,\xi\vert^{\frac 1 3 } }\vert l,\xi\vert^3.
\end{align*}
To control the remaining multipliers, since $t\in \tilde I_{k,\eta}\cap \tilde I_{k-l,\eta-\xi}$, it is convenient to write  $$t=\frac \eta k +\tau_1=\frac {\eta-\xi }{k-l} +\tau_2$$ and observe that
\begin{align*}
    \eta l-k\xi&= \eta (k-l) - (\eta-\xi) k = \left( {\frac \eta k -t +t -\frac {\eta-\xi}{k-l}}\right){k(k-l)}\\
    &=  {(\tau_2-\tau_1)}{k(k-l)}.
\end{align*}
Thus
\begin{align*}
    &\frac {\vert (\xi-lt)(\eta l-k\xi) \vert }{k^2 +(\eta -kt )^2 }\frac { J(t,k,\eta )}{J(t,k-l ,\eta - \xi )}\lesssim t \frac {| \tau_1|+|\tau_2| } {(1+\vert t-\frac \eta k \vert )^{\frac 3 2 }(1+\vert t-\frac {\eta-\xi} {k-l } \vert)^{\frac 1 2 }}e^{10\rho  \vert l ,\xi\vert^{\frac 1 3 } }\vert l,\xi\vert^5\\
    &\quad \lesssim t \left(\frac {1} {(1+\vert t-\frac \eta k \vert )^{\frac 1 2 }(1+\vert t-\frac {\eta-\xi} {k-l } \vert)^{\frac 1 2 }}+ \frac {\vert \tau_2\vert^{\frac 1 2 }  } {(1+\vert t-\frac \eta k \vert )^{\frac 3 2 }}\right)e^{10\rho  \vert l ,\xi\vert^{\frac 1 3 } }\vert l,\xi\vert^5.
\end{align*}
Since $|\tau_2|\leq (|(\eta-\xi)/(k-l )^3|)^\frac12$ for $t\in \tilde I_{k-l ,\eta-\xi}$, we estimate
\begin{align*}
    \frac {\vert \tau_2\vert^{\frac 1 2 }  } {1+\vert t-\frac \eta k \vert }&\le \frac {(\tfrac {|\eta| }{|k|^3} )^{\frac 1 2 } } {1+\vert t-\frac \eta k \vert }(\textbf{1}_{\vert k-l, \eta-\xi\vert \le \langle t\rangle ^2 }+\textbf{1}_{\vert k-l, \eta-\xi\vert \ge \langle t\rangle ^2 })\vert l ,\xi \vert^2 \\
    &\le \frac {\langle \tfrac \eta k \rangle^{\frac 1 2 }   } {1+\vert t-\frac \eta k \vert }\textbf{1}_{\vert k-l, \eta-\xi\vert \le \langle t\rangle ^2 } \vert l ,\xi \vert^2  \\
    &\quad +\frac 1 {\vert \eta-\xi , k-l \vert^{\frac 1 2 } }\frac {\vert \eta, k \vert  }{\vert k \vert } \frac {1  } {1+\vert t-\frac \eta k \vert } \textbf{1}_{\vert k-l, \eta-\xi\vert \ge \langle t\rangle ^2 }\vert l ,\xi \vert^3\\
    &\le \left(1+ \sqrt t \sqrt{\tfrac {\p_t m } {m} (k-l,\eta-\xi)}\right)\vert l ,\xi \vert^6,
\end{align*}
where in the last inequality we used  Lemma  \ref{lem:fullm1} ii).
Overall, we have the bound \begin{align*}
    \frac {\vert (\xi-lt)(\eta l-k\xi) \vert }{k^2 +(\eta -kt )^2 }\frac { J(t,k,\eta )}{J(t,k-l ,\eta - \xi )}&\lesssim \left(1+ \sqrt t \sqrt{\tfrac {\p_t m }{m} (k-l,\eta-\xi)}\right)e^{10\rho  \vert l ,\xi\vert^{\frac 1 3 } }\vert l,\xi\vert^{11},
\end{align*}
meaning that also for this term, using \eqref{eq:Rexp}, we get
\begin{align}
\label{bd:finRRR}
     R_{\phi\to G }^{R,R}  &\lesssim t \Vert A \nabla_t G \Vert_{L^2}\Vert A \phi \Vert_{L^2}\left( \Vert A\phi\Vert_{L^2}+ t^{\frac 1 2 }  \Vert \sqrt{\tfrac{\p_t m} {m }  }  A \phi \Vert_{L^2 }\right),
\end{align}
which is consistent with \eqref{bd:finRphitoG}.

\medskip
\noindent
\textit{$\diamond$ Bound on $R_{\phi\to G }^{NR,R}$:}
It remains to control this last term, where the support of the integral is   $S_R\cap\tilde I_{k,\eta }^c\cap \tilde I_{k-l,\eta -\xi}$. According to definition of $J, \tilde J$ \eqref{def:J}-\eqref{def:tJ}, we split $J(k,\eta ) = \tilde J(k,\eta ) + e^{8 \rho \vert k\vert^{\frac 1 2 }}$. For the second term we use
\begin{align*}
 e^{8 \rho \vert k\vert^{\frac 1 2 }}&\le J(k-l, \eta-\xi) e^{8 \rho \vert l\vert^{s}}    \
\end{align*}
and Lemma \ref{lem:fullm1} iii) to get
\begin{align}
    \notag&\frac {\vert (\eta -kt )(\xi-lt)(\eta l-k\xi) \vert }{k^2 +(\eta -kt )^2 }e^{8 \rho \vert k\vert^{\frac 1 2 }}\lesssim t|\eta-kt|\frac {\vert \eta ,k \vert}{k^2+(\eta-kt)^2}   J(k-l, \eta-\xi) e^{20 \rho \vert l,\xi\vert^{s}} \vert l,\xi\vert^2\\
    \notag&\qquad \lesssim t|\eta-kt| \left( \frac {\langle \frac  \eta {k^3}  \rangle }{1+\vert\frac\eta k-t\vert}\left(1+ \sqrt t \sqrt{\tfrac {\p_t m }{ m} (k-l,\eta-\xi)}\right)+\frac 1 {\langle t \rangle^2}\right)  J(k-l, \eta-\xi) e^{20 \rho \vert l,\xi\vert^{s}} \vert l,\xi\vert^2\\
   \label{eq:NRR2}&\qquad \lesssim t|\eta-kt| \left(1+ \sqrt t \sqrt{\tfrac {\p_t m }{ m} (k-l,\eta-\xi)}\right)  J(k-l, \eta-\xi) e^{20 \rho \vert l,\xi\vert^{s}} \vert l,\xi\vert^2.
\end{align}
For $\tilde J$, appealing to Lemma \ref{lem:fullJ} iii)  we obtain
\begin{align*}
    \tilde J(k,\eta ) &\lesssim\frac{ \vert \tfrac {\eta -\xi }{(k-l)^3}\vert^{\frac 1 2}}{(1+ \vert t-\frac {\eta-\xi}{k-l}\vert)^{\frac 1 2 } }\tilde J(k-l,\eta-\xi  ) e^{20 \rho \vert l,\xi\vert^{s}}.
\end{align*}
Since on $I_{k,\eta }^c$ it holds that $\vert\eta, k\vert/ |k|^3 \lesssim  \langle  \frac\eta k -t \rangle $, by Lemma \ref{lem:fullq} ii) we have
\begin{align*}
    &\frac {\vert (\eta -kt )(\xi-lt)(\eta l-k\xi) \vert }{k^2 +(\eta -kt )^2 }\frac{\vert \tfrac {\eta -\xi }{(k-l)^3}\vert^{\frac 1 2}} {(1+ \vert t-\frac {\eta-\xi}{k-l}\vert)^{\frac 1 2 } }\\
    &\quad \lesssim t |\eta-kt| \frac {\vert\eta, k\vert  }{k^2 }\frac 1 {1 +(\frac\eta k -t )^2 } \left( 1+ \sqrt t \frac 1 {(1+ \vert t-\frac {\eta-\xi}{k-l}\vert)^{\frac 1 2 } }\right) \vert l ,\xi\vert^2 \\
    &\quad \lesssim  t|\eta-kt|  \left( 1+ \sqrt t \frac 1 {(1+ \vert t-\frac {\eta-\xi}{k-l}\vert)^{\frac 1 2 } }\right) \vert l ,\xi\vert^3 \\
    &\quad \lesssim t |\eta-kt| \left( 1+ \sqrt t\sqrt{\tfrac {\p_t q }q(k-l,\eta-\xi) }\right) \vert l ,\xi\vert^6.
\end{align*}
Therefore, we infer
\begin{align}\begin{split}
    &\frac {\vert (\eta -kt )(\xi-lt)(\eta l-k\xi) \vert }{k^2 +(\eta -kt )^2}\tilde J(k,\eta) \\
    &\quad \lesssim  t |\eta-kt|  \left( 1+ \sqrt t\sqrt{\tfrac {\p_t q }q(k-l,\eta-\xi) }\right) \tilde J(k-l,\eta-\xi  ) e^{20 \rho \vert l,\xi\vert^{s}}\vert l,\xi\vert^9. \label{eq:NRR1}
\end{split}\end{align}
Combining the bounds \eqref{eq:Rexp}, \eqref{eq:NRR2} and \eqref{eq:NRR1} we deduce  that
\begin{align}
\label{bd:finRNRR}
     R_{\phi\to G }^{NR,R}&\lesssim t \Vert A \nabla_t G \Vert_{L^2}\Vert A \phi \Vert_{L^2}\left(\Vert A \phi \Vert_{L^2}+t^\frac12 \Vert \tilde A \sqrt{\tfrac {\p_t q }q}\phi  \Vert_{L^2}+t^\frac12\Vert  A \sqrt{\tfrac {\p_t m }{m}} \phi  \Vert_{L^2}\right)
\end{align}
which is consistent with \eqref{bd:finRphitoG}.

\subsubsection*{$\bullet$ \textbf{Bound on  $T_{\phi\to G}$:}}
Recall that we need to estimate
\begin{align*}
    T_{\phi\to G}=\sum_{ \substack{k,l  \\ k \neq 0 }}\iint \textbf{1}_{S_T}  \frac {\vert (\eta -kt )^2(\eta l-k\xi) \vert }{k^2 +(\eta -kt )^2 }  A(k,\eta)  \vert AG\vert (k,\eta) \vert \phi\vert (l,\xi) \vert \phi\vert (k-l ,\eta -\xi) d\xi d\eta.
\end{align*}
First of all, we bound $(\eta-kt)^2/(k^2+(\eta-kt)^2)\leq 1$. Furthermore, we write
\begin{align*}
    \eta l-k \xi &= (\eta -\xi-(k-l)t)k -  (\eta-kt)(k-l),
\end{align*}
which yields the  bound
\begin{align*}
    \vert \eta l-k\xi\vert   &\le t\vert k,  \eta-kt \vert \vert k-l, \eta -\xi  \vert.
\end{align*}
Using Lemma \ref{lem:fullJ} i), we obtain that
\begin{align*}
   {J(k,\eta ) }&\lesssim \left( J(l,\xi)+ \textbf{1}_{I_{l,\xi} } \sqrt{\tfrac \xi{l^3}}\tfrac 1 {1+\vert t-\frac \xi l\vert^{\frac 1 2 } }  \tilde J(l,\xi) \right)e^{16 \rho \vert k-l ,\eta-\xi\vert^s  }\\
   &\lesssim \left( J(l,\xi)+  \sqrt{t}\sqrt{\tfrac {\p_t q} q (l ,\xi)}  \tilde J(l,\xi) \right)e^{16 \rho \vert k-l ,\eta-\xi\vert^s  }.
\end{align*}
Furthermore, as in the estimate \eqref{eq:Rexp} for $S_T$, we note that
\begin{align}
    e^{\lambda(t) \vert k,\eta\vert^s}&\le e^{c_s\lambda(t) \vert k-l,\eta-\xi\vert^s} e^{\lambda(t) \vert l,\xi\vert^s}.\label{eq:Texp}
\end{align}
We thus infer
\begin{align*}
    \frac {\vert (\eta -kt )^2(\eta l-k\xi) \vert }{k^2 +(\eta -kt )^2 }  A(k,\eta)&\lesssim t\vert k,  \eta-kt \vert \langle k-l,\eta-\xi\rangle^{-2}  A(k-l,\eta-\xi)\\
    &\qquad \qquad \times  \left( A(l,\xi)+  \sqrt{t}\sqrt{\tfrac {\p_t q} q (l ,\xi)}  \tilde A(l,\xi) \right).
\end{align*}
This yields the bound
\begin{align*}
    T_{\phi\to G}&\lesssim t \Vert A \nabla_t G \Vert_{L^2} \Vert A\phi\Vert_{L^2}\left(\sqrt t \Vert \tilde A\sqrt{\tfrac {\p_t q } q} \phi\Vert_{L^2}+\Vert A\phi\Vert_{L^2}\right),
\end{align*}
 which proves the desired estimate \eqref{bd:finTphitoG}.

\subsubsection*{$\bullet$ \textbf{Bound on $\calR_{\phi\to G}$:}} In this section we bound the remainder of the action from $\phi$ to $G$
\begin{align*}
    \calR_{\phi\to G} &=\sum_{ \substack{k,l \\
    k\neq 0} }\iint \textbf{1}_{S_\calR}\frac {\vert (\eta -kt )(\xi-lt)(\eta l-k\xi) \vert }{k^2 +(\eta -kt )^2 }  A(k,\eta)    \vert AG\vert (k,\eta) \vert \phi\vert (l,\xi) \vert \phi\vert (k-l ,\eta -\xi)  d\xi d \eta\\
    &\le \sum_{ \substack{k,l \\
    k\neq 0} }\iint \textbf{1}_{S_\calR}\frac {\vert (\xi-lt)(\eta l-k\xi) \vert }{k^2 +(\eta -kt )^2 }  A(k,\eta)    \vert A \Lambda_t G\vert (k,\eta) \vert \phi\vert (l,\xi) \vert \phi\vert (k-l ,\eta -\xi)  d\xi d \eta.
\end{align*}
On $S_\calR$ we obtain the estimate
\begin{align*}
    A(k,\eta ) \lesssim \, & \frac 1{\langle k-l,\eta-\xi \rangle^{\frac N2}\langle l,\xi \rangle^{\frac N2} }\frac {J(k,\eta ) }{J(l,\xi )J(k-l,\eta-\xi)} \\
   & \times e^{\lambda(t)(\vert k,\eta\vert^s- \vert l,\xi\vert^s-\vert k- l, \eta-\xi\vert^s)}A(l,\xi)A(k-l,\eta-\xi).
\end{align*}
Moreover, by Lemma \ref{lem:fullJ} i) we  deduce
\begin{align*}
    \frac {J(k,\eta ) }{J(l,\xi )J(k-l,\eta-\xi)}\lesssim \langle  l,\xi\rangle  \langle k- l, \eta- \xi\rangle  e^{8\rho(\vert l,\xi\vert^s+ \vert k-l,  \eta-\xi\vert^s)}).
\end{align*}
Furthermore, since $s\le \tfrac 1 2 $ by Lemma \ref{lem:useest} iii), we have that $$\vert k,\eta\vert^s\le \tfrac 5 6( \vert l,\xi\vert^s+\vert k- l, \eta-\xi\vert^s),$$ and therefore
\begin{align*}
    \frac {J(k,\eta ) }{J(l,\xi )J(k-l,\eta-\xi)} e^{\lambda(t)(\vert k,\eta\vert^s- \vert l,\xi\vert^s-\vert k- l, \eta-\xi\vert^s)}\le \langle  l,\xi\rangle  \langle k- l, \eta- \xi\rangle.
\end{align*}
Hence
\begin{align*}
    \frac {\vert (\xi-lt)(\eta l-k\xi) \vert }{k^2 +(\eta -kt )^2 }A(k,\eta ) &\lesssim \langle t \rangle^{-1} \langle k-l,\eta-\xi \rangle^{-2}A(l,\xi)A(k-l,\eta-\xi)
\end{align*}
and so we infer
\begin{align*}
    \calR_{\phi\to G} \lesssim \langle t \rangle^{-1} \Vert A \nabla_t G \Vert_{L^2} \Vert A\phi\Vert_{L^2}^2,
\end{align*}
which gives \eqref{bd:fincalRphitoG}.

\subsubsection{\textbf{Bound on the nonlinear action of $G$ on $\phi$}}
\label{sec:phi}
In this section, we provide bounds for the term defined in \eqref{def:NLGtophi}. We rewrite and split this term as follows
\begin{align*}
     \vert NL_{G\to \phi}\vert &=\left| \langle  A( (\nabla^\perp G_{\neq}\cdot \nabla) \phi)- (\nabla^\perp G_{\neq}\cdot \nabla) A\phi,A \phi \rangle\right|\\
    &\leq
    \sum_{ \substack{k,l \\
    k\neq l} }\iint  \left(
    \textbf{1}_{S_R}+\textbf{1}_{S_T}+\textbf{1}_{S_\calR}\right)\vert \eta l -k \xi\vert  \vert A(k,\eta)-A(l,\xi)\vert\\
&\hspace{2cm}\times\vert A\phi\vert (k,\eta ) \vert G\vert (k-l,\eta-\xi) \vert \phi\vert (l,\xi) d\xi d\eta\\
     &= R_{G\to\phi}+T_{G\to\phi}+\calR_{G\to\phi},
\end{align*}
where we recall the notation introduced in \eqref{def:SR}-\eqref{def:ScalR}.

Our next goal is to prove the following bounds
\begin{align}
    \label{bd:finRGtophi}
R_{G \to \phi}&\lesssim \langle t\rangle  \Vert A \nabla_t G \Vert_{L^2}\Vert A\phi \Vert_{L^2}\left(\langle t\rangle^{\frac 1 2}\Vert A\sqrt{\tfrac{\p_t m}{m }} \phi \Vert_{L^2}+\Vert A\phi \Vert_{L^2}\right)\\
\label{bd:finTGtophi}    T_{G \to \phi} &\lesssim \Vert A \Lambda^{\frac s 2 } \phi\Vert_{L^2 }^2  \Vert A G\Vert_{L^2 } +\langle t \rangle ^{\frac 54-\frac {3s}4 } \Vert  A\Lambda^{\frac s 2 }\phi\Vert_{L^2}\Vert A \nabla_t G \Vert_{L^2}\Vert  A\phi\Vert_{L^2}\\
\label{bd:fincalRGtophi} \mathcal{R}_{G \to \phi}&\lesssim \langle t \rangle^{-1} \Vert A \nabla_t G \Vert_{L^2} \Vert A\phi\Vert_{L^2}^2.
\end{align}
As in the previous section, it is straightforward to check that, thanks to \eqref{bootstrap} and $\eps t^\frac32\leq \delta$, one can prove \eqref{bd:NLGphi}. Indeed, for $\mathcal{R}_{\phi \to G}$ it is obvious. For the reaction term, appealing to \eqref{bootstrap} we have
\begin{align*}
    R_{G \to \phi}&\lesssim \eps \langle t \rangle ^{3/2} \Vert A \nabla_t G \Vert_{L^2}\Vert A\sqrt{\tfrac{\p_t m}{m }} \phi \Vert_{L^2}+\langle t \rangle \Vert A \nabla_t G \Vert_{L^2}\Vert A\phi \Vert_{L^2}^{2}\\
    &\lesssim t^\frac32 \eps  \mathcal{D}_{G,\phi}+t\eps^2  \sqrt{\mathcal{D}_{G,\phi}}
\end{align*}
and thus integrating in time we get an estimate consistent with \eqref{bd:NLGphi} for this term.
For the transport term it is enough to observe that $\eps t^{\frac54-\frac{3s}{4}}\leq \delta t^{-\frac{1+3s}{4}}$. Since $s>1/3$ we also get that $(1+3s)/4\geq (1+\gamma_*)/2$ by the choice of $\gamma_*$ in \eqref{def:lambda}. This implies
\begin{align*}
    T_{G\to \phi}\lesssim \delta \mathcal{D}_{G,\phi},
\end{align*}
and thus integrating in time we get \eqref{bd:NLGphi}.

\subsubsection*{$\bullet$ \textbf{Bound on  $R_{ G\to \phi}$:}}
Let us rewrite the integrand of this term as
\begin{align*}
    \calI_{R,G\to \phi}&= \frac {\vert \eta l -k \xi\vert\vert A(k,\eta)-A(l,\xi)\vert}{\vert k-l\vert+ \vert \eta -\xi-(k-l)t \vert }     \vert A\phi\vert (k,\eta ) \vert \Lambda_t  G\vert (k-l,\eta-\xi) \vert \phi\vert (l,\xi).
\end{align*}
We then split the reaction term by
\begin{align*}
    R_{G\to \phi}&=\sum_{ \substack{k,l \\
    k\neq l} }\iint \textbf{1}_{S_R }( \textbf{1}_{\tilde I_{k-l,\eta-\xi}^c}+\textbf{1}_{\tilde I_{k-l,\eta-\xi}}) I_{R,G\to \phi} \, d\xi d\eta\\
    &=: R^{NR}_{G\to\phi}+R^R_{G\to\phi}.
\end{align*}
We recall that on $S_R$ we have the bound \eqref{eq:Rexp}, which give us space to pay regularity on the low frequencies $(l,\xi).$

\medskip
\noindent
\textit{$\diamond$ Bound on  $R_{G\to\phi}^{NR}$:} In the set $S_R\cap I_{k-l,\eta-\xi}^c$, by Lemma \ref{lem:fullJ} we obtain
\begin{align*}
        \frac { J(k,\eta )}{J(k-l ,\eta - \xi )}\lesssim e^{10\rho  \vert l ,\xi\vert^{\frac 1 3 } } \vert l,\xi\vert^2 .
    \end{align*}
Therefore, combining the bound above with    \begin{align*}
        \frac {\vert \eta l -k \xi\vert}{\vert k-l\vert+ \vert \eta -\xi-(k-l)t \vert }&\lesssim \left(\frac t{1+ \vert\frac { \eta -\xi}{k-l}-t \vert }+1 \right)\vert l ,\xi \vert ^2\\
        &\lesssim \langle t\rangle \vert l ,\xi \vert ^2 ,
    \end{align*}
    and using \eqref{eq:Rexp}, we infer
\begin{align}
    \vert R_{G\to\phi}^{NR}\vert &\lesssim \langle t \rangle \Vert A \nabla_t G \Vert_{L^2}\Vert A \phi  \Vert^2_{L^2},\label{eq:GpR1}
\end{align}
which is consistent with \eqref{bd:finRGtophi}.

\medskip
\noindent
\textit{$\diamond$ Bound on $R_{G\to\phi}^R$:} We consider the set $S_R $ with $t\in \tilde  I_{k-l,\eta-\xi} $ and therefore by Lemma \ref{lem:fullJ} we get
\begin{align*}
        J(t,k,\eta )\lesssim    \frac {\vert {\tfrac   {\eta-\xi}{(k-l)^3 }}\vert ^{\frac 1 2 } }{(1+\vert t-\frac {\eta-\xi} {k-l} \vert)^{\frac 1 2 } } J(t,k-l ,\eta-\xi ) e^{10\rho  \vert l ,\xi\vert^{\frac 1 2 } }.
    \end{align*}
Moreover, in $ \tilde  I_{k-l,\eta-\xi} $ we also know that $\vert \tfrac {\eta-\xi} {k-l }\vert \le 2 t $. Therefore, when $\vert \eta, k \vert \le t^2$, we estimate
\begin{align*}
    \textbf{1}_{\vert \eta, k \vert \le t^2}\frac {\vert \eta l -k \xi\vert \left\vert {\tfrac   {\eta-\xi}{(k-l )^3}}\right\vert^{\frac 1 2 }  }{\vert k-l\vert   (1+\vert\frac {\eta-\xi} {k-l } \vert )^{\frac 3 2 }}
    &\lesssim  \textbf{1}_{\vert \eta, k \vert \le t^2}   \frac {\frac {\vert \eta-\xi\vert^{\frac 3 2 }} {\vert k-l\vert ^{\frac 52 } }} {(1+\vert\frac {\eta-\xi} {k-l } \vert )^{\frac 3 2 }} {\vert l,\xi\vert^2 }\\
    &\lesssim \textbf{1}_{\vert \eta, k \vert \le t^2}  t^{\frac 3 2 } \sqrt{\tfrac {\p_t m }m(k,\eta)}\vert l,\xi\vert^4 .
\end{align*}
When $\vert \eta, k \vert \ge t^2$ we instead have
\begin{align*}
     \frac {t^2}{\vert k-l\vert} & \le  \frac {\vert \eta ,k \vert }{\vert k-l\vert}\le \left( \big\vert \frac {\eta-\xi} {k-l }\big\vert+1\right) \vert l,\xi\vert \lesssim \langle t\rangle  \vert l,\xi\vert.
\end{align*}
Thus
\begin{align*}
    \textbf{1}_{\vert \eta, k \vert \ge t^2}\frac {\vert \eta l -k \xi\vert \left\vert {\tfrac   {\eta-\xi}{(k-l )^3}}\right\vert^{\frac 1 2 }  }{\vert k-l\vert   (1+\vert\frac {\eta-\xi} {k-l } \vert )^{\frac 3 2 }}
    &\lesssim  \textbf{1}_{\vert \eta, k \vert \ge t^2}  \frac {\tfrac {\vert \eta-\xi\vert^{\frac 3 2 }} {\vert k-l\vert ^{\frac 52 } } } {(1+\vert\frac {\eta-\xi} {k-l } \vert )^{\frac 3 2 }} {\vert l,\xi\vert^2 }
    \lesssim  \textbf{1}_{\vert \eta, k \vert \ge t^2} \frac {t^{\frac 3 2 }} {\vert k-l\vert  }   {\vert l,\xi\vert^2 }\\
    &\lesssim \textbf{1}_{\vert \eta, k \vert \ge t^2} \langle t \rangle^{\frac{1}{2}}   \vert l,\xi\vert^3.
\end{align*}
Therefore, by combining these estimates and using \eqref{eq:Rexp}, we obtain
\begin{align*}
    \frac {\vert \eta l -k \xi\vert\vert A(k,\eta)-A(l,\xi)\vert}{\vert k-l\vert+ \vert \eta -\xi-(k-l)t \vert } &\lesssim \left(\langle t\rangle ^{\frac 1 2 }
 + t^{\frac 3 2 } \sqrt{\tfrac {\p_t m }m(k,\eta)}\right) \vert l,\xi\vert^4 A(k-l ,\eta- \xi) e^{10\rho  \vert l ,\xi\vert^{\frac 1 2 } }.
\end{align*}
From the bound above, we deduce
\begin{align}\label{eq:GpR2}
    \vert R_{G\to\phi}^R\vert &\lesssim \langle t\rangle  \Vert A \nabla_t G \Vert_{L^2}\Vert A\phi \Vert_{L^2}(t^\frac12\Vert A\sqrt{\tfrac{\p_t m}{m }}\phi \Vert_{L^2}+ \Vert A\phi \Vert_{L^2}^2),
\end{align}
which is consistent with \eqref{bd:finRGtophi}.

\subsubsection*{$\bullet$ \textbf{Bound on $T_{G\to\phi}$:} } Here we want to estimate
\begin{align*}
    T_{G\to\phi} =\sum_{ \substack{k,l \\
    k\neq l} }\iint  \textbf{1}_{S_T} \vert \eta l -k \xi\vert \vert A(k,\eta)-A(l,\xi)\vert  \vert A\phi \vert (k,\eta ) \vert G\vert (k-l,\eta-\xi) \vert \phi\vert (l,\xi) d\xi d\eta.
\end{align*}
Recalling the definition of the weight in \eqref{def:A0}, we split the difference as
\begin{align*}
    A(k,\eta)-A(l,\xi)&= A(l,\xi )(e^{\lambda(t) (\vert k,\eta \vert^s-\vert l,\xi  \vert^s)}-1  )\\
    &+A(l,\xi )e^{\lambda(t) (\vert k,\eta \vert^s-\vert l,\xi  \vert^s)}\big(\frac{\langle k,\eta\rangle^N }{\langle l,\xi\rangle^N}-1  \big)\\
    &+A(l,\xi )e^{\lambda(t) (\vert k,\eta \vert^s-\vert l,\xi  \vert^s)}\frac{\langle k,\eta\rangle^N }{\langle l,\xi\rangle^N}\big(\frac{m(k,\eta )  }{m(l,\xi) }-1  \big)\\
    &+A(l,\xi )e^{\lambda(t) (\vert k,\eta \vert^s-\vert l,\xi  \vert^s)}\frac{\langle k,\eta\rangle^N }{\langle l,\xi\rangle^N}\frac {m(k,\eta )}{m(l,\xi)}\big(\frac{J(k,\eta)}{J(l,\xi)}-1  \big).
\end{align*}
Furthermore, on $S_T$ by Lemma \ref{lem:useest} iii) we know that there exists a constant $c_s\in (0,1) $ such that
\begin{align*}
    e^{\lambda(t) (\vert k,\eta \vert^s-\vert l,\xi  \vert^s)}\lesssim e^{c_s \lambda(t) \vert k- l,\eta- \xi  \vert^s}.
\end{align*}
Denoting the integrand
\begin{align*}
    \calI_{T,G\to \phi}&=\vert \eta l -k \xi\vert e^{c_s \lambda(t) \vert k- l,\eta- \xi  \vert^s}  \vert A\phi \vert (k,\eta ) \vert G\vert (k-l,\eta-\xi) \vert  A\phi\vert (l,\xi),
\end{align*}
we then split the transport term as
\begin{align*}
    T_{G\to\phi} &\lesssim
   \sum_{ \substack{k,l \\
    k\neq l} } \iint \textbf{1}_{\Omega_T} \calI_{T,G\to \phi}\bigg(\big\vert \frac{J(k,\eta)}{J(l,\xi)}-1 \big\vert  +\big\vert \frac{m(k,\eta )  }{m(l,\xi) }-1   \big\vert\\
        &\qquad  \qquad   +  e^{-c_s\lambda(t) \vert k- l,\eta- \xi  \vert^s}\vert e^{\lambda(t) (\vert k,\eta \vert^s-\vert l,\xi  \vert^s)}-1\vert +\big\vert \frac{\langle k,\eta\rangle^N }{\langle l,\xi\rangle^N}-1 \big\vert\bigg) d\xi d \eta \\
    &=: T_{G\to\phi,1}+ T_{G\to\phi,2}+ T_{G\to\phi,3}+ T_{G\to\phi,4}.
\end{align*}
We proceed by controlling each of the terms above separately.

\medskip
\noindent
\textit{$\diamond$ Bound on $T_{G\to \phi,1}$:} For $(t\le \tfrac 1 2 \min(|\xi|^{\frac 2 3 }, |\eta|^{\frac 2 3 }) )$ or ($t\ge \tfrac 1 2 \min(|\xi|^{\frac 2 3 }, |\eta|^{\frac 2 3 }) $ and $|\eta|\le 4|k| $), using  Lemma \ref{lem:fullJ} vi) or vii) respectively we get
\begin{align*}
   \big\vert \frac{J(k,\eta)}{J(l,\xi)}-1 \big\vert  &\lesssim \frac {\langle k-l ,\eta-\xi\rangle }{\vert k,l,\xi,\eta \vert^{\frac 23 }} e^{C\rho \vert \eta-\xi,k-l \vert^{\frac13 }}.
\end{align*}
Thus
\begin{align*}
    \vert\eta l-k\xi\vert \big\vert \frac{J(k,\eta)}{J(l,\xi)}-1 \big\vert \lesssim \vert k,\eta\vert^{\frac 1 3 }e^{\tilde C \vert \eta-\xi,k-l \vert^{\frac13 }},
\end{align*}
for some constant $\tilde C>0$.
\noindent
When $t\ge \tfrac 1 2 \min(|\xi|^{\frac 2 3 }, |\eta|^{\frac 2 3 }) $ and $|\eta|\ge 4 |k| $, we know that
\begin{equation*}
    \frac{J(k,\eta)}{J(l ,\xi)}\lesssim |\xi|^\frac12e^{C_0|k-l ,\eta-\xi|^\frac13},
\end{equation*}
for some constant $C_0>0$.
Thus
\begin{align*}
    \vert\eta l-k\xi\vert \big\vert \frac{J(k,\eta)}{J(l,\xi)}-1 \big\vert\lesssim |\xi|^\frac32 e^{\tilde C_0 \vert \eta-\xi,k-l \vert^{\frac13 }}\lesssim |l,\xi|^{\frac{s}{2}}t^{\frac32(\frac 3 2 -\frac{s}{2})}e^{C_1 \vert \eta-\xi,k-l \vert^{\frac13 }},
\end{align*}
for a constant $\tilde C_0, C_1>0$. Combining these estimates, we obtain
\begin{align*}
    \vert\eta l-k\xi\vert \big\vert \frac{J(k,\eta)}{J(l,\xi)}-1 \big\vert&\lesssim (\vert k,\eta\vert^{\frac 1 3 }+\vert l ,\xi \vert^{\frac s 2} t^{\frac 32 (\frac 3 2-\frac s2) } )e^{\tilde C_1\rho \vert \eta-\xi,k-l \vert^{\frac13 }},
\end{align*}
for a suitable constant $\tilde C_1>0.$ Therefore, since $s>1/3$ we obtain
\begin{align}\begin{split}
     T_{G\to \phi,1 }&\lesssim \Vert A \Lambda^{\frac s 2 } \phi\Vert_{L^2 }^2  \Vert A G\Vert_{L^2 }  + t^{\frac 94-\frac {3s}4}\Vert A \Lambda^{\frac s 2 } \phi\Vert_{L^2 }  \Vert A\Lambda^{-1}  G\Vert_{L^2 } \Vert A  \phi\Vert_{L^2 }\\
    &\lesssim \Vert A \Lambda^{\frac s 2 } \phi\Vert_{L^2 }^2  \Vert A G\Vert_{L^2 }  + t^{\frac 54-\frac {3s}4 }\Vert A \Lambda^{\frac s 2 } \phi\Vert_{L^2 }  \Vert A \nabla_t  G\Vert_{L^2 } \Vert A  \phi\Vert_{L^2 },\label{eq:GpT1}
\end{split}\end{align}
which is consistent with \eqref{bd:finTGtophi}.

\medskip
\noindent
\textit{$\diamond$ Bound on $T_{G\to \phi,2}$:} By definition of $m$, if $\tfrac {m(k,\eta)}{m(l,\xi)} \neq 1 $ we obtain
\begin{align*}
    \min (\vert k,\eta \vert, \vert l ,\xi \vert) &\le t^{2}.
\end{align*}
Therefore
\begin{align*}
    \vert \eta l - k \xi\vert  &\le \min(\vert k,\eta\vert ,\vert l,\xi \vert) \vert k-l,\eta -\xi\vert  \\
    &\le t^{2(1-\frac s 2 ) }\min(\vert k,\eta\vert^{\frac s 2 } ,\vert l,\xi \vert^{\frac s 2 }) \vert k-l,\eta -\xi\vert.
\end{align*}
Hence, it is not hard to  deduce that
\begin{align*}
T_{G\to \phi,2 } &\lesssim   t^{2(1-\frac s 2 ) } \Vert A \Lambda^{-1} G \Vert_{L^2} \Vert  A\Lambda^{\frac s 2 }\phi\Vert_{L^2}\Vert  A\phi\Vert_{L^2}\\
    &\lesssim   \langle t \rangle ^{1-s } \Vert  A\Lambda^{\frac s 2 }\phi\Vert_{L^2}\Vert A \nabla_t G \Vert_{L^2}\Vert  A\phi\Vert_{L^2},
\end{align*}
which is consistent with \eqref{bd:finTGtophi} since $1-s\leq \frac54-\frac{3s}{4}$.

\medskip
\noindent
\textit{$\diamond$ Bound on $T_{G\to \phi,3}$: }Here we simply use that
\begin{align*}
    \left\vert e^{\lambda(t) (\vert k,\eta \vert^s-\vert l,\xi  \vert^s)}-1\right\vert &\lesssim  (\vert k,\eta \vert^s-\vert l,\xi  \vert^s) e^{\lambda(t) (\vert k,\eta \vert^s-\vert l,\xi  \vert^s)}\\
    &\le \frac {\vert \eta -\xi, k-l \vert}{\vert k,\eta\vert^{1-s } }e^{c_s \lambda(t) \vert k-l,\eta-\xi \vert^s}.
\end{align*}
Thus
\begin{align}\begin{split}
     T_{G\to \phi,3} &\lesssim \Vert A\Lambda^{\frac s 2 }\phi \Vert^2 \Vert A G \Vert, \label{eq:GpT3}
\end{split}\end{align}
which is in agreement with \eqref{bd:finTGtophi}.

\medskip
\noindent
\textit{$\diamond$ Bound on $T_{G\to \phi,4}$: } Since $\vert l,\xi\vert \approx \vert k,\eta \vert $ we obtain
\begin{align*}
    \vert \tfrac{\langle k,\eta\rangle^N }{\langle l,\xi\rangle^N}-1 \vert\lesssim \vert k- l,\eta- \xi\vert  {\langle l,\xi \rangle^{N-1}}.
\end{align*}
and therefore we infer
\begin{align}
     T_{G\to \phi,4} \lesssim \Vert  A G \Vert_{L^2}\Vert  A \phi  \Vert_{L^2}^2,\label{eq:GpT4}
\end{align}
which is consistent with \eqref{bd:finTGtophi}.

\subsubsection*{$\bullet$ \textbf{Bound on $\calR_{G\to \phi} $}: }In this section we bound the remainder of the action from $G$ to $\phi$
\begin{align*}
     \calR_{G\to \phi}  &\leq
    \sum_{ \substack{k,l \\
    k\neq 0} }\iint \textbf{1}_{S_\calR}\vert \eta l -k \xi\vert  \vert A(k,\eta)-A(l,\xi)\vert\vert A\phi\vert (k,\eta ) \vert G\vert (k-l,\eta-\xi) \vert \phi\vert (l,\xi) d\xi d\eta\\
    &\leq
    \sum_{ \substack{k,l \\
    k\neq 0} }\iint \textbf{1}_{S_\calR}\tfrac {\vert \eta l -k \xi\vert }{\sqrt{k^2+(\eta-kt )^2}} \vert A(k,\eta)-A(l,\xi)\vert\vert A\phi\vert (k,\eta ) \vert \Lambda_t G\vert (k-l,\eta-\xi) \vert \phi\vert (l,\xi) d\xi d\eta.
\end{align*}
We can now proceed exactly as done to control $\mathcal{R}_{\phi \to G}$ and conclude that
\begin{align*}
    \calR_{G\to \phi } \lesssim \langle t \rangle^{-1} \Vert A \nabla_t G \Vert_{L^2} \Vert A\phi\Vert_{L^2}^2,
\end{align*}
as desired for \eqref{bd:fincalRphitoG}.

\subsection{The $NL_\phi$ estimate }
In this section, we want to bound the remaining nonlinear terms arising from the equation of $\phi$
\begin{align*}
    NL_{\phi}&= \langle A\big((\p_x\nabla^\perp \Delta_t^{-1} \phi_{\neq}\cdot \nabla)\phi\big),A\phi\rangle -\langle A(v^x_{0}\p_x \phi_{\neq}),A\phi_{\neq}\rangle .
\end{align*}

For the first term, the nonlinearity is of the form of  \cref{sec:phi} with $G$ replaced by $\p_x\Delta_t^{-1} \phi_{\neq}$. Therefore we can proceed exactly as done to prove \eqref{bd:finRGtophi}-\eqref{bd:fincalRGtophi} and, using that $\|\p_x\Delta_t^{-1} A\phi\|_{L^2}\lesssim \|A\phi\|_{L^2}$ when needed, we get
\begin{align}\begin{split}
    |\langle &A\big((\p_x\nabla^\perp \Delta_t^{-1} \phi_{\neq}\cdot \nabla)\phi\big),A\phi\rangle|\\
    &\lesssim  \langle t\rangle  \Vert A\p_x\Lambda^{-1}_t \phi_{\neq}  \Vert_{L^2}\left(\langle t\rangle^{\frac 1 2}\Vert A\sqrt{\tfrac{\p_t m}{m }} \phi \Vert_{L^2}+\Vert A\phi \Vert_{L^2}\right)\Vert A\phi \Vert_{L^2}\\
    &\quad +\Vert A \Lambda^{\frac s 2 } \phi\Vert_{L^2 }^2  \Vert A\phi_{\neq}\Vert_{L^2 } +\langle t \rangle ^{\frac 54-\frac {3s}4 } \Vert  A\Lambda^{\frac s 2 }\phi\Vert_{L^2}\Vert A  \p_x\Lambda^{-1}_t \phi_{\neq}\Vert_{L^2}\Vert  A\phi\Vert_{L^2}.\label{eq:NLphi1}
\end{split}\end{align}
Hence, in the rest of the section we need to control only the term
\begin{align*}
    &NL_{v_0\to \phi }:=|\langle A(v^x_{0}\p_x \phi_{\neq}),A\phi_{\neq}\rangle| = |\langle [A,v^x_{0}]\p_x \phi_{\neq},A\phi_{\neq}\rangle|\\
    &\le \sum_{k\neq 0 } \iint  (\textbf{1}_{S_R\cup S_\calR }+ \textbf{1}_{S_T }) \vert k \vert \vert A(k,\eta)-A(k,\xi) \vert  \vert A\phi\vert(k,\eta )  \vert v_0^x\vert(\eta-\xi  )  \vert A\phi\vert(k,\xi  ) d\xi d\eta, \\
    &=:R_{v_0\to \phi }+T_{v_0\to \phi }.
\end{align*}
Our next goal is to prove the following bounds
\begin{align}
    \label{bd:Rv0phi}R_{v_0\to \phi }&\lesssim \Vert A \langle \p_y\rangle^{-1}v_0^x \Vert_{L^2}\Vert A \phi  \Vert_{L^2}^2, \\
        \label{eq:boundvp} T_{v_0\to \phi }&\lesssim \big(\Vert  A\phi\Vert^2_{L^2}+\langle t^{1-\frac  3 2 s }\rangle \Vert \Lambda^{\frac s 2 } A\phi\Vert^2_{L^2}\big) \Vert A \langle \p_y \rangle ^{-1}  v_0^x\Vert_{L^2} \\
        \notag &\qquad + t^{\frac 3 2 } \Vert  A\phi\Vert_{L^2}\Vert \sqrt{\tfrac {\p_t q }q}A \phi\Vert_{L^2} \Vert A \langle \p_y \rangle ^{-1} \p_y v_0^x\Vert_{L^2}.
\end{align}
For $R_{v_0\to \phi}$ it is straightforward to check that, thanks to \eqref{bootstrap} and $\eps t^{\frac 3 2 } \le \delta$, one can prove a bound in agreement with \eqref{bd:NLphi}. For $T_{v_0\to \phi}$, it is enough to observe that since $s>1/3$ and $\eps\leq \delta t^{-\frac32}$ we have
\begin{equation*}
\langle t^{1-\frac32s}\rangle \Vert \Lambda^{\frac s 2 } A\phi\Vert^2_{L^2}\Vert A \langle \p_y \rangle ^{-1}  v_0^x\Vert_{L^2}\lesssim \delta \frac{1}{\langle t\rangle^{\frac12+\frac32 s}}\Vert \Lambda^{\frac s 2 } A\phi\Vert^2_{L^2}\lesssim \delta \mathcal{D}_{G,\phi}.
\end{equation*}
Hence, overall we get that
\begin{equation*}
    T_{v_0\to \phi}\lesssim \eps^3+\delta (\mathcal{D}_{G,\phi}+\sqrt{\calD_{G,\phi}}\sqrt{\calD_{v_0}})
\end{equation*}
and integrating in time we prove a bound consistent with \eqref{bd:NLphi}.
\subsubsection*{$\bullet$ \textbf{Bound on $R_{v_0\to \phi }$: }} In the support of the integral we have $\vert \eta-\xi \vert \ge \tfrac 1 8 \vert k , \xi\vert$. Thus, using Lemma \ref{lem:fullJ} v) we infer that
\begin{align*}
    \vert k \vert \vert A(k,\eta)-A(k,\xi) \vert\lesssim \frac 1 {\vert k,  \xi \vert^2\langle \eta-\xi\rangle} A(0,\eta -\xi)A(k,\xi).
\end{align*}
This readily implies the bound \eqref{bd:Rv0phi}.
\subsubsection*{$\bullet$ \textbf{Bound on $T_{v_0\to \phi }$: }}
We first split
\begin{align*}
    A(k,\eta)-A(k,\xi)=\, & A(k,\xi )\left(e^{\lambda(t) (\vert k,\eta \vert^s-\vert k,\xi  \vert^s)}-1  \right)\\
    &+A(k,\xi )e^{\lambda(t) (\vert k,\eta \vert^s-\vert k,\xi  \vert^s)}\big(\frac{\langle k,\eta\rangle^N }{\langle k,\xi\rangle^N}-1  \big)\\
    &+A(k,\xi )e^{\lambda(t) (\vert k,\eta \vert^s-\vert k,\xi  \vert^s)}\frac{\langle k,\eta\rangle^N }{\langle k,\xi\rangle^N}\big(\frac{m(k,\eta )  }{m(k,\xi) }-1  \big)\\
    &+A(k,\xi )e^{\lambda(t) (\vert k,\eta \vert^s-\vert k,\xi  \vert^s)}\frac{\langle k,\eta\rangle^N }{\langle k,\xi\rangle^N}\frac {m(k,\eta )}{m(k,\xi)}\big(\frac{J(k,\eta)}{J(k,\xi)}-1  \big).
\end{align*}
Then, in account of  Lemma \ref{lem:useest} i) we get
\begin{align*}
    T_{v_0\to \phi }
    &\le \sum_{k\neq 0 } \iint  \vert k \vert \bigg(\big\vert \frac{J(k,\eta)}{J(k,\xi)}-1\big \vert  +\big\vert \frac{m(k,\eta )  }{m(k,\xi) }-1  \big\vert+e^{-c_s\lambda(t) \vert \eta -\xi \vert^s}\vert e^{\lambda(t) (\vert k,\eta \vert^s-\vert k,\xi  \vert^s)}-1  \vert\\
    &\qquad  \qquad +\big\vert \frac{\langle k,\eta\rangle^N }{\langle k,\xi\rangle^N}-1  \big\vert\bigg)  \vert A\phi\vert(k,\eta )  \vert v_0^x\vert(\eta-\xi  )  \vert A\phi\vert(k,\xi  ) d\xi d\eta\\
    &= T_{v_0\to \phi,1 }+T_{v_0\to \phi,2 }+T_{v_0\to \phi,3 }+T_{v_0\to \phi,4 }.
\end{align*}
For $T_{v_0\to \phi,3 }$ and $T_{v_0\to \phi,4 }$ we can apply the same steps as done for $T_{G\to \phi,3 }$ and $T_{G\to \phi,4 }$ to get
\begin{align*}
    T_{v_0\to \phi,3 }&\lesssim  \Vert \Lambda^{\frac s 2 } \phi\Vert^2_{L^2} \Vert A\langle \p_y \rangle^{-1}v_0^x\Vert_{L^2}, \\
    T_{v_0\to \phi,4 }&\lesssim  \Vert A\phi\Vert^2_{L^2}\Vert  A\langle \p_y \rangle^{-1}v_0^x\Vert_{L^2},
\end{align*}
which are bounds consistent with \eqref{eq:boundvp}. We control the remaining terms below.

\medskip
\noindent
\textit{$\diamond$ Bound on $T_{v_0\to \phi,1 }$:} For $(t\le \tfrac 1 2 \min(|\xi|^{\frac 2 3 }, |\eta|^{\frac 2 3 }) )$ or ($t\ge \tfrac 1 2 \min(|\xi|^{\frac 2 3 }, |\eta|^{\frac 2 3 }) $ and $|\eta|\le 4|k| $), using  Lemma \ref{lem:fullJ} vi) or vii) respectively we get
\begin{align*}
   \big\vert \frac{J(k,\eta)}{J(k,\xi)}-1 \big\vert  &\lesssim \frac {\langle \eta-\xi\rangle }{\vert k,\xi,\eta \vert^{\frac 23 }} e^{C \vert \eta-\xi\vert^{\frac13 }},
\end{align*}
for some constant $C>0$ that can change from line to line. Thus, since $s>1/3$, we get
\begin{align*}
    \vert k\vert \big\vert \frac{J(k,\eta)}{J(k,\xi)}-1 \big\vert \lesssim \vert k\vert^{\frac 1 3 }e^{C \vert \eta-\xi\vert^{\frac13 }}\lesssim |k,\eta|^se^{C\vert \eta-\xi\vert^{\frac13 }}.
\end{align*}
When  $t\geq \tfrac12\min(|\xi|^\frac23,|\eta|^\frac23)$ and $4\vert k \vert \le  |\eta|$, we can directly apply  Lemma \ref{lem:fullJ} viii).
Therefore, in all cases we get the following bound
\begin{align*}
    \vert k\vert \big\vert \frac{J(k,\eta)}{J(l,\xi)}-1 \big\vert &\lesssim \big(|k,\eta|^{s}+\mathbf{1}_{\{1\leq |k|\lesssim t^\frac32\}}\big(\vert k\vert^{\frac 2 3}+\vert k\vert |\eta-\xi| \sqrt{\tfrac {\p_t q }q (k,\eta )}\big)\big)e^{C|\eta-\xi|^\frac13}\\
    &\lesssim \big(\vert k,\eta \vert^st^{1-\frac  3 2 s }  +t^{\frac 3 2 } |\eta-\xi| \sqrt{\tfrac {\p_t q }q(k,\eta ) }\big)e^{C|\eta-\xi|^\frac13}.
\end{align*}
Thus, it is not hard to conclude that
\begin{align*}
    T_{v_0\to \phi,1 }&\le t^{1-\frac  3 2 s }\Vert \Lambda^{\frac s 2 } A\phi\Vert^2_{L^2} \Vert  A \langle \p_y \rangle ^{-1}v_0^x\Vert_{L^2}+ t^{\frac 3 2 } \Vert  A\phi\Vert_{L^2}\Vert \sqrt{\tfrac {\p_t q }q } A\phi\Vert_{L^2} \Vert A \langle \p_y \rangle ^{-1} \p_y v_0^x\Vert_{L^2}.
\end{align*}
which is agreement with \eqref{eq:boundvp}.

\medskip
\noindent
\textit{$\diamond$ Bound on $T_{v_0\to \phi,2 }$:}  Applying \cref{lem:fullm1} iv) and using \cref{lem:fullJ} v), we can easily infer that
\begin{align*}
    T_{v_0\to \phi,2 }&\lesssim \Vert  A\phi\Vert^2_{L^2}\Vert A\langle \p_y \rangle^{-1} v_0^x\Vert_{L^2}
\end{align*}
thus concluding the proof of \eqref{eq:boundvp}.

\subsection{ The $NL_G$ Estimate }
\label{sec:NLG}
To conclude the bounds on the energy functional $\mathcal{E}$, we have to estimate the term $NL_G$ defined in \eqref{def:NLG}.
 We want to estimate
        \begin{align*}
            NL_G&=-\langle A\big( \Delta^{-1}_t  ( v\cdot \nabla_t w)_{\neq}\big), AG\rangle- \langle    \p_x\Delta_t^{-1}(v\cdot\nabla_t  \phi)_{\neq},AG\rangle\\
            &=:NL_{G,1}+NL_{G,2}.
        \end{align*}
For the first term, since $v$ is divergence-free we use the identity $$ v\cdot \nabla_t w=  \nabla_t^\perp\cdot  (v\cdot \nabla_t v)=\nabla_t^\perp\cdot(\nabla_t\cdot(v\otimes v)),$$ so that  integrating by parts we get
\begin{align*}
    NL_{G,1}=
    -\langle  A ( v\otimes  v)_{\neq}, \Delta^{-1}_t (\nabla^\perp_t\otimes \nabla_t )AG\rangle.
\end{align*}
Thanks to the properties of the weights, it is not hard to deduce that
\begin{equation}
     A(k,\eta )\lesssim\sqrt t  \left(\frac 1 {\langle l ,\xi\rangle^2 }+ \frac 1 {\langle k-l ,\eta-\xi\rangle^2 } \right)  A(k-l,\eta-\xi) A(l,\xi). \label{eq:AG}
\end{equation}
Therefore
\begin{align*}
    |NL_{G,1}|&\lesssim \sqrt t  \Vert A G \Vert_{L^2} \Vert Av\Vert_{L^2}^2.
\end{align*}
Then, observing that
\begin{equation}
\label{eq:idvneq}
    |v_{\neq}|\leq |\Lambda_t^{-1} w_{\neq}|\leq|\Lambda_t G|+|\p_x\Lambda_t^{-1}\phi_{\neq}|,
\end{equation}
we conclude
\begin{align*}
|NL_{G,1}|    &\lesssim \sqrt t \Vert A G \Vert_{L^2} \left(\Vert A \nabla_t G\Vert_{L^2}^2+\Vert A\p_x \Lambda_t^{-1} \phi_{\neq} \Vert_{L^2}^2+\Vert Av_0^x \Vert_{L^2}^2\right).
\end{align*}
For the second term we can do the following rough estimate: integrating by parts and using  \eqref{eq:AG} and \eqref{eq:idvneq} we have
\begin{align*}
    |NL_{G,2}|&=   |\langle A    (v \phi)_{\neq},A\p_x\nabla_t \Delta_t^{-1}G\rangle|\\
    &\lesssim \sqrt t  \Vert AG \Vert_{L^2} \Vert Av \Vert_{L^2} \Vert A\phi \Vert_{L^2} \\
    &\lesssim \sqrt t   \Vert A\phi \Vert_{L^2} \left(\Vert A \nabla_t G\Vert_{L^2}^2+\Vert A\p_x \Lambda_t^{-1} \phi_{\neq} \Vert_{L^2}^2+\Vert A G \Vert_{L^2} \Vert Av_0^x \Vert_{L^2}\right).
\end{align*}
Appealing to  the bootstrap hypotheses \eqref{bootstrap}, we thus get
\begin{equation*}
    |NL_G|\lesssim \sqrt{t}\eps \mathcal{D}_{G,\phi}+\sqrt{t}\eps^3.
 \end{equation*}
Hence, integrating in time we infer that for some constant $C>0$ we get
\begin{align*}
    \int_0^T \vert NL_G\vert d\tau  \lesssim\sqrt T\eps^3+T^\frac32 \eps^3\leq C \delta\eps^2 ,
\end{align*}
where the last inequality follows since $T\leq \delta \eps^{-\frac23}$.

\subsection{The $NL_{v_0}$ estimate } In this section, we bound the nonlinear term for $v_0^x$ defined in \eqref{def:NLv0}. We first observe that
         \begin{align*}
            NL_{v_0}=  \langle A\langle \p_y\rangle^{-1}( b_{\neq}^y \p_y^t b_{\neq}^x -v_{\neq}^y \p_y^t v^x_{\neq})_0, A\langle \p_y\rangle^{-1}v_0^x
            \rangle.
        \end{align*}
        Then, since $\p_y^tb_{\neq}^y=-\p_xb_{\neq}^x$ and $\p_y^tv_{\neq}^y=-\p_xv_{\neq}^x$, integrating by parts the identity above, recalling that $b=\nabla^\perp_t \phi$ and $v=\nabla^\perp_t \psi$, we obtain
 \begin{align*}
            NL_{v_0}&= \langle A\langle \p_y\rangle^{-1} (\p_x\phi_{\neq} \p_y^t\phi_{\neq})_0, A\langle \p_y\rangle^{-1}\p_yv_0^x
            \rangle\\
            &\quad - \langle A\langle \p_y\rangle^{-1} ( \p_x\psi_{\neq}  \p_y^t\psi_{\neq})_0, A\langle \p_y\rangle^{-1} \p_yv_0^x
            \rangle \\
            &=:NL_{v_0,1}+NL_{v_0,2}.
        \end{align*}
The most challenging term is the one involving $\phi_{\neq}$. Indeed, for $\psi_{\neq}$ we can use the relation $\psi_{\neq}=G-\p_x\Delta_t^{-1}\phi_{\neq}$ and, since $G$ enjoys dissipative properties, the term involving $\psi_{\neq}$ is easier compared to the one for $\phi_{\neq}$. Therefore, we only present the detailed computations for $NL_{v_0,1}$. A first trivial bound gives
    \begin{align*}
        |NL_{v_0,1}|\lesssim \sum_{k\neq 0}\iint \frac{A(0,\eta)}{\langle \eta \rangle}|\xi-kt||k| |\phi_{\neq}(k,\xi)||\phi_{\neq}(-k,\eta-\xi)|\frac{A(0,\eta)}{\langle \eta \rangle}|\p_yv_0^x|(\eta) d\eta d\xi
    \end{align*}
In account of Lemma \ref{lem:fullJ} and Lemma \ref{lem:useest}, it is not hard to deduce the following inequality
\begin{align*}
    A(0,\eta ) \lesssim\, & \left(\frac 1 {\vert k,\xi\vert^5 }+\frac 1 {\vert k,\eta - \xi\vert^5 }  \right) \\
    &\times \left(1 + \sqrt t \sqrt{\frac {\p_t q } q (-k,\eta-\xi) }+ \sqrt t \sqrt{\frac {\p_t q } q (k,\xi) }\right) \tilde A(-k,\eta -\xi)\tilde A(k,\xi).
\end{align*}
Notice that the gain of one derivative in $y$ is crucial to bound $$\frac{|\xi-kt|}{\langle\eta\rangle}\lesssim  t\min\{|k,\xi|,|k,\eta-\xi|\}$$ and thus we never have a dangerous derivative loss.
Hence,
\begin{align*}
    |NL_{v_0,1}|\lesssim t\|A\langle \p_y\rangle^{-1}\p_yv_0^x\|_{L^2}\|A\phi\|_{L^2}(\|A\phi\|_{L^2}+t^\frac12\|\sqrt{\tfrac{\p_t q}{q}}\tilde A \phi\|_{L^2}).
\end{align*}
In view of the bootstrap hypothesis \eqref{bootstrap}, we get
\begin{equation*}
    |NL_{v_0,1}|\lesssim t^\frac32 \eps \sqrt{\calD_0}\sqrt{\calD_{G,\phi}}+t\eps^2\sqrt{\calD_0}.
\end{equation*}
Integrating the last inequality in time and using H\"older's inequality we then deduce that
\begin{equation*}
    \int_0^T |NL_{v_0,1}| d\tau \lesssim T^\frac32 \eps^3\leq C\delta \eps^2,
\end{equation*}
for a suitable constant $C>0.$
\section{Nonlinear Instability: Long-Time Growth of the Current}
\label{sec:Instability}
In this section, we establish the instability of current and magnetic field as stated in Theorem \ref{thm:main}.
More precisely, we show that for initial data with suitable Fourier support, the linearized dynamics exhibit current growth and that this behavior is stable under small nonlinear perturbations.

\begin{lemma}\label{lem:lowbd}
    Consider a solution which satisfies the bootstrap estimates \eqref{bootstrap} for $0\le t\le \delta \eps^{-\frac23}$. Let $K,c >0$ and $\chi (k) := \textbf{1}_{\vert k \vert \ge 20e^{2\pi}} $. If the initial data additionally satisfies
    \begin{align*}
        \Vert \chi  \phi_{in} \Vert_{H^{-2}}\ge \max (K \delta \eps, \Vert \chi\p_x  G \Vert_{H^{-2}} , c\Vert   \phi_{in} \Vert_{H^{-2}}),
    \end{align*}
     then for all times $0\leq t\le \delta \eps^{-\frac23}$ it holds that
\begin{align*}
    \Vert j(t)\Vert_{L^2}&\gtrsim t^2 \Vert   \phi_{in} \Vert_{H^{-2}},\\
    \Vert b(t)\Vert_{L^2}&\gtrsim t \Vert   \phi_{in} \Vert_{H^{-2}} .
\end{align*}
\end{lemma}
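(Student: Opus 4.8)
The plan is to reduce the statement to the single estimate $\|\chi\phi(t)\|_{H^{-2}}\gtrsim\|\phi_{in}\|_{H^{-2}}$ for all $0\le t\le\delta\eps^{-2/3}$, and to read off the bounds on $j=\Delta_t\phi$ and $b=\nabla^\perp_t\phi$ from elementary Fourier-side inequalities. For $|k|\ge k_0\ge2$ one has the time-uniform symbol bounds
\[
\big(k^2+(\eta-kt)^2\big)^2\gtrsim \langle t\rangle^4\langle k,\eta\rangle^{-4},\qquad k^2+(\eta-kt)^2\gtrsim\langle t\rangle^2\langle k,\eta\rangle^{-2},
\]
obtained by splitting according to whether $|\eta|\le\tfrac12|k|t$ (then $|\eta-kt|\ge\tfrac12|k|t\ge t$) or $|\eta|>\tfrac12|k|t$ (then $\langle k,\eta\rangle\gtrsim t$ while the bracket is $\ge k_0^2$). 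Multiplying by $|\widehat\phi(t,k,\eta)|^2$ and summing over $|k|\ge k_0$ yields $\|j(t)\|_{L^2}\gtrsim\langle t\rangle^2\|\chi\phi(t)\|_{H^{-2}}$ and $\|b(t)\|_{L^2}\gtrsim\langle t\rangle\|\chi\phi(t)\|_{H^{-2}}$, after which the hypothesis $\|\chi\phi_{in}\|_{H^{-2}}\ge c\|\phi_{in}\|_{H^{-2}}$ completes the reduction.

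For the reduced estimate I would run a mode-by-mode Duhamel formula on $\phi$. Writing the $\phi$-equation of \eqref{eq:phiGeq} in Fourier as $\partial_t\widehat\phi=-\tfrac{k^2}{k^2+(\eta-kt)^2}\widehat\phi+ik\widehat G+\widehat{NL_\phi}$ for $k\neq0$, and using the integrating factor $D(t,\tau;k,\eta)=\exp\!\big(-\!\int_\tau^t\tfrac{k^2}{k^2+(\eta-ks)^2}\,ds\big)$, which satisfies $e^{-\pi}\le D\le1$ because $\int_0^\infty\tfrac{k^2}{k^2+(\eta-ks)^2}\,ds\le\pi$, one gets $\widehat\phi(t)=D(t,0)\widehat\phi_{in}+R(t)$ with $R(t)=\int_0^t D(t,\tau)\big(ik\widehat G(\tau)+\widehat{NL_\phi}(\tau)\big)\,d\tau$. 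Hence $\|\chi\phi(t)\|_{H^{-2}}\ge e^{-\pi}\|\chi\phi_{in}\|_{H^{-2}}-\|\chi R(t)\|_{H^{-2}}$, and it suffices to prove that $\|\chi R(t)\|_{H^{-2}}\le C\eps+C\,k_0^{-2}\|\chi\partial_x G_{in}\|_{H^{-2}}$ with $C$ independent of $k_0,K,\delta$. Given this, one first fixes $k_0$ large so that $Ck_0^{-2}\le\tfrac14 e^{-\pi}$ and then $K$ large (in terms of $\delta$) so that $C\eps\le\tfrac14 e^{-\pi}K\delta\eps\le\tfrac14 e^{-\pi}\|\chi\phi_{in}\|_{H^{-2}}$; using also $\|\chi\partial_x G_{in}\|_{H^{-2}}\le\|\chi\phi_{in}\|_{H^{-2}}$ from the hypothesis, this gives $\|\chi R(t)\|_{H^{-2}}\le\tfrac12 e^{-\pi}\|\chi\phi_{in}\|_{H^{-2}}$, hence the claim (with $\eps_0$ finally chosen small).

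The nonlinear term $\int_0^t D(t,\tau)\widehat{NL_\phi}(\tau)\,d\tau$ is harmless: $NL_\phi$ is quadratic and $H^{-2}$ is a very weak norm in two dimensions — using $\|fg\|_{H^{-1}}\lesssim\|f\|_{H^{1/3}}\|g\|_{H^{1/3}}$, that each term is an exact divergence (since $\nabla\cdot\nabla^\perp=0$), and that $\partial_x\nabla^\perp\Delta_t^{-1}$ has a time-uniform symbol bound, one gets $\|NL_\phi(\tau)\|_{H^{-2}}\lesssim\eps^2$ from the bootstrap bounds $\|A\phi\|,\|AG\|,\|A\langle\partial_y\rangle^{-1}v_0^x\|\lesssim\eps$, so $\|\int_0^t D\,\widehat{NL_\phi}\|_{H^{-2}}\lesssim t\eps^2\lesssim\delta\eps^{4/3}\le\eps$. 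The genuine difficulty — and the step I expect to be the main obstacle — is the magnetic coupling $\int_0^t D(t,\tau)ik\widehat G(\tau)\,d\tau$: the crude bound $\int_0^t\|\chi\partial_x G(\tau)\|_{H^{-2}}\,d\tau\le\sqrt t\,\big(\int_0^t\|A\nabla_t G\|_{L^2}^2\big)^{1/2}\lesssim\sqrt\delta\,\eps^{2/3}$ is hopelessly lossy, since $G$ is forced at the nonlinear echoes and does not decay pointwise.

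To handle this one exploits that for $|k|\ge k_0$ the $G$-equation in \eqref{eq:phiGeq} is parabolic: $\partial_\tau\widehat G=-b(\tau)\widehat G+\widehat{F_G}$ with $b(\tau)=k^2+(\eta-k\tau)^2-(\text{l.o.t. symbol})\ge\tfrac12 k^2$ — the lower-order linear terms $2\partial_x\partial_y^t\Delta_\tau^{-1}+\partial_x^2\Delta_\tau^{-1}$ having symbol of modulus $\le2$, as in the proof of Proposition~\ref{prop:linear} — and $F_G=-\partial_x^3\Delta_\tau^{-2}\phi_{\neq}+NL_{\phi\to G}+NL_G$. Substituting $\widehat G=b^{-1}\big(\widehat{F_G}-\partial_\tau\widehat G\big)$ and integrating by parts in $\tau$ (carrying the harmless factor $D$) splits $\int_0^t ik\widehat G\,d\tau$ into: a boundary term $\tfrac{ik}{b(0)}\widehat G_{in}$, bounded by $\tfrac{2}{k^2}|\widehat{\partial_x G_{in}}|$ — precisely where the hypothesis on $\|\chi\partial_x G_{in}\|_{H^{-2}}$ enters; a boundary term $\tfrac{ik}{b(t)}\widehat G(t)$, bounded by $\tfrac{2}{|k|}|\widehat G(t)|$ and hence contributing $\lesssim k_0^{-1}\|\chi G(t)\|_{L^2}\lesssim k_0^{-1}\eps$; and two time integrals whose kernels satisfy $\int_s^t e^{-k^2(\tau-s)/2}\,d\tau\lesssim k^{-2}$ and $\int_0^t\big(k^2+(\eta-k\tau)^2\big)^{-1}\,d\tau\lesssim k^{-2}$, each producing a gain $k_0^{-2}$. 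These are closed using the dissipation functional $\mathcal D_{G,\phi}$ of \eqref{bootstrap}: the linear forcing $-\partial_x^3\Delta_\tau^{-2}\phi_{\neq}$ contributes $\lesssim k_0^{-2}\big(\int_0^t\|A\partial_x\Lambda_\tau^{-1}\phi_{\neq}\|_{L^2}^2\big)^{1/2}\lesssim k_0^{-2}\eps$, and — crucially — on $|k|\ge k_0$ one has $\int_0^t\|\chi G(\tau)\|_{L^2}^2\,d\tau\le k_0^{-2}\int_0^t\|A\nabla_t G\|_{L^2}^2\,d\tau\lesssim k_0^{-2}\eps^2$ \emph{uniformly in $t$} (the $k^2$-part of $\Delta_t$ alone dissipates the high-$k$ component of $G$). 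Finally the nonlinear part of $F_G$ is estimated in $H^{-2}$ as the $NL_\phi$ step: $\|NL_{\phi\to G}(\tau)\|_{H^{-2}}\lesssim\langle\tau\rangle\eps^2$ (only one surviving moving-frame derivative), while $\|NL_G(\tau)\|_{H^{-2}}$ is controlled by interpolating $\|v_{\neq}\|_{L^2}$ between its square-integrable bound from $\mathcal D_{G,\phi}$ (via \eqref{eq:idvneq}) and the crude bound $\langle\tau\rangle\eps$; in both cases the contribution to $\|\chi R(t)\|_{H^{-2}}$ is $\lesssim k_0^{-1}\langle t\rangle^{3/2}\eps^2\lesssim k_0^{-1}\delta^{3/2}\eps$. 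Collecting all the bounds gives the required estimate on $\|\chi R(t)\|_{H^{-2}}$ and finishes the proof.
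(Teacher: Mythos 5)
Your proof takes a genuinely different route from the paper's. The paper decomposes $\phi=\phi_{lin}+(\phi-\phi_{lin})$ where $\phi_{lin}$ solves the \emph{full} linear system (with the $G$--coupling included), proves two-sided bounds $C^{-1}\|\phi_{in}\|_X\le\|\phi_{lin}\|_X\le C\|\phi_{in}\|_X$ via a weighted energy estimate with a multiplier $\tilde m$ bounded between $1$ and $e^{2\pi}$, and then treats the purely nonlinear remainder $\phi-\phi_{lin}$ by another energy estimate in the semi-norm $\|\cdot\|_X$. You instead isolate only the scalar self-interaction $-\tfrac{k^2}{k^2+(\eta-kt)^2}\widehat\phi$ via the integrating factor $D$, so that the ``free'' part $D(t,0)\widehat\phi_{in}$ is controlled by the elementary pointwise bound $e^{-\pi}\le D\le1$, at the cost of pushing the linear magnetic coupling $\int_0^t ikD\,\widehat G\,d\tau$ into the remainder; you then neutralize that term by substituting $\widehat G=b^{-1}(\widehat F_G-\partial_\tau\widehat G)$ and integrating by parts, exploiting that for $|k|\ge k_0$ the $G$-equation is uniformly parabolic with rate $\gtrsim k^2$. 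Both arguments rely on the same ingredients in the end (high-$k$ cutoff, the hypothesis $\|\chi\partial_x G_{in}\|\le\|\chi\phi_{in}\|$, the bootstrap bounds on $\mathcal E$ and the dissipation), and both are correct; your version is more explicit mode-by-mode, while the paper's energy formulation is more systematic and avoids the second integration by parts.

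Two small points worth flagging. First, $k_0=20e^{2\pi}$ is \emph{fixed} in the lemma's hypothesis via $\chi$, so you cannot ``fix $k_0$ large'' at the end; rather, you need to verify that the constants produced by the argument are already beaten by $k_0^{-1}\approx\tfrac{1}{20}e^{-2\pi}$ (which they are, comfortably, since the competing constant is $e^{-\pi}$). Second, your remainder bound carries a term $Ck_0^{-1}\eps$ (from the boundary term $\tfrac{ik}{b(t)}\widehat G(t)$) without a factor of $\delta$, which forces $K\gtrsim 1/\delta$; the paper's decomposition avoids this because the entire linear part is captured in $\phi_{lin}$ and the remainder is genuinely $O(\delta\eps^{4/3})$. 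Since $\delta$ is a fixed constant this does not invalidate the argument, but it is a slight loss relative to the paper's cleaner statement that $K$ depends only on universal constants.
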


\begin{proof}
We write the nonlinear solutions $\phi, \p_x G$ as a perturbation around the linear solution:
\begin{align*}
    \p_t \begin{pmatrix}
        \phi_{lin}\\ \p_x G_{lin}
    \end{pmatrix}&=L\begin{pmatrix}
        \phi_{lin}\\ \p_x G_{lin}
    \end{pmatrix},\\
    \p_t \begin{pmatrix}
        \phi-\phi_{lin}\\ \p_x (G-G_{lin})
    \end{pmatrix}&=L\begin{pmatrix}
        \phi-\phi_{lin}\\ \p_x (G- G_{lin})
    \end{pmatrix}+ NL[\phi,G],
\end{align*}
where we introduced the short-hand-notation
\begin{align*}
    L&= \begin{pmatrix}
        - \p_x^2 \Delta_t^{-1} & 1 \\
        - \p_x^4 \Delta_t^{-2} &(\Delta_t+ 2 \p_x \p_y^t\Delta_t^{-1} + \p_x^2 \Delta_t^{-1})
    \end{pmatrix},\\
    NL[\phi,G]&= \begin{pmatrix}
        - \p_x^{-1}( \nabla^\perp G\cdot \nabla \phi) + \p_x^{-1}((\p_x\nabla^\perp \Delta_t^{-1} \phi_{\neq}\cdot \nabla) \phi) - \p_x^{-1}(v^x_{0}\p_x \phi_{\neq}) \\
        \Delta^{-1}_t \nabla^\perp_t \p_x \cdot(( \nabla^\perp \phi  \cdot\nabla) \nabla^\perp_t  \phi )_{\neq}-\Delta^{-1}_t  \p_x ( v\cdot \nabla_t w)_{\neq} -   \p_x^2\Delta_t^{-1}(v\cdot\nabla_t  \phi)_{\neq}
    \end{pmatrix}.
\end{align*}
 Let $k_0 = 20e^{2\pi}$, and recall the multiplier $\chi$ given by
\begin{align*}
    \chi(k) &= \begin{cases}
        1 & \vert k \vert \ge k_0,\\
        0 & \text{else}.
    \end{cases}
\end{align*}
Then for $f,g\in L^2$, we define the inner-semi-product
\begin{align*}
    \langle f,g \rangle_X := \langle \chi \Lambda^{-2} f,\chi \Lambda^{-2} g \rangle_{L^2 }
\end{align*}
and $\Vert \cdot \Vert_X$ the corresponding semi-norm. This semi-norm satisfies $\Vert \cdot \Vert_X\le \Vert \cdot \Vert_{L^2}$.

We consider initial data, which satisfies $\Vert \phi_{in} \Vert_{X}\ge \Vert  \p_x G _{in} \Vert_{X} $ and claim that the following estimates hold
\begin{align}
    C^{-1} \Vert \phi_{in} \Vert_{X} \le \Vert \phi_{lin}  \Vert_{X}&\le C \Vert \phi_{in} \Vert_{X},\label{eq:lowlin}\\
    \Vert \phi- \phi_{lin}  \Vert_{X} &\le  C \delta \eps^{\frac 4 3 }. \label{eq:lowupNL}
\end{align}
for some $C>0$. First we assume the estimates \eqref{eq:lowlin} and \eqref{eq:lowupNL} and prove Lemma \ref{lem:lowbd} as follows: For $\Vert \phi_{in} \Vert_{X}\ge K \eps \delta$ with $ K \ge  2C^2 $ we deduce, that
\begin{align*}
    \Vert \phi \Vert_X &\ge \Vert \phi_{lin}  \Vert_{X}-  \Vert \phi- \phi_{lin}  \Vert_{X} \\
    &\ge C^{-1} \Vert \phi_{in} \Vert_{X}- C \delta \eps^{\frac 4 3 }\\
    &= \frac 1 2 C^{-1}\Vert \phi_{in} \Vert_{X}+\delta \eps  K\left( \frac 1 2C^{-1} -  K^{-1} C\right).
\end{align*}
Therefore,
\begin{align*}
    \Vert b \Vert_{L^2} &= \Vert \nabla^\perp_t  \phi \Vert_{L^2} = \Vert \Lambda_t \Lambda \phi \Vert_{H^{-1} }\ge t \Vert \chi \phi \Vert_{H^{-2} }= t \Vert \chi \phi_{in} \Vert_{X},\\
    \Vert j \Vert_{L^2} &= \Vert \Delta_t   \phi \Vert_{L^2} = \Vert \Delta_t \Delta\phi \Vert_{H^{-2} }\ge t^2 \Vert \phi \Vert_{H^{-2} }= t^2\Vert \phi_{in} \Vert_{X}.
\end{align*}
Thus, since $\Vert \chi \phi_{in} \Vert_{X}\ge c \Vert \phi_{in} \Vert_{X}$, Lemma \ref{lem:lowbd} holds if \eqref{eq:lowlin} and \eqref{eq:lowupNL} hold. So it is only left to prove \eqref{eq:lowlin} and \eqref{eq:lowupNL}, we first prove \eqref{eq:lowlin}. We estimate
\begin{align*}
    \frac 1 2 \p_t \Vert (\phi_{lin} , \p_x G_{lin}) \Vert^2_{X}&=- \Vert \p_x \Lambda_t^{-1} \phi_{lin}\Vert_{X}+ \langle \p_x G ,(\Delta_t+ 2 \p_x \p_y^t\Delta_t^{-1} + \p_x^2 \Delta_t^{-1})\p_x G\rangle_X  \\
    &- \langle \p_x G   , \p_x^4 \Delta_t^{-2} \phi \rangle_X + \langle \phi, \p_x G \rangle_X.
\end{align*}
It holds that
\begin{align*}
    \vert \langle \phi, \p_x G \rangle_X\vert &\le \vert \langle \p_x \Lambda_t^{-1}\phi , \p_x^{-1} \p_x \Lambda_t G \rangle_X\vert  \le \frac 1{2k_0}( \Vert \p_x \Lambda_t^{-1}\phi\Vert_X^2 +\Vert  \p_x \Lambda_t G\Vert_X^2)\\
    \vert \langle \p_x G   , \p_x^4 \Delta_t^{-2} \phi \rangle\vert &\le \frac 1{2k_0}( \Vert \p_x \Lambda_t^{-1}\phi\Vert_X^2 +\Vert  \p_x \Lambda_t G\Vert_X^2)
\end{align*}
and
 \begin{align*}
     \langle \p_x G ,(\Delta_t+ 2 \p_x \p_y^t\Delta_t^{-1} + \p_x^2 \Delta_t^{-1})\p_x G\rangle&\le\left(\frac 2 {k_0} -1 \right)  \Vert \nabla_t \p_x G \Vert_X.
 \end{align*}
Therefore, since ${k_0}\ge 10$, we obtain
\begin{align*}
     \p_t \Vert (\phi_{lin} , \p_x G_{lin}) \Vert^2_{X}+ \frac 1 2\Vert \p_x \Lambda_t^{-1} \phi_{lin}\Vert_{L^2} + \frac 1 2 \Vert \nabla_t \p_x G_{lin} \Vert_{X} &\le 0
\end{align*}
and so
\begin{align}
    \Vert (\phi_{lin} , \p_x G_{lin}) \Vert^2_{X}(t)+ \frac 12 \Vert \p_x \Lambda_t^{-1} \phi_{lin}\Vert_{L^2X} + \frac 1 2 \Vert \nabla_t \p_x G_{lin} \Vert_{L^2X}&\le \Vert \phi_{in} , \p_x G_{in}\Vert_{X},\label{eq:lowup}
\end{align}
which proves the upper bound of \eqref{eq:lowlin}. For the lower bound we define the multiplier
\begin{align*}
    \tilde m&= \exp\left( \int_0^t \frac 1 {1+ (\tau-\frac \eta k )^2}d\tau\right),
\end{align*}
then we obtain
\begin{align*}
    \p_t \Vert \tilde  m \phi\Vert_{X}^2&= 2\langle \tilde m\phi,\tilde  m\p_x G\rangle_X \ge -\frac {2e^{2\pi} } {\vert k_{0}\vert }\Vert \p_x \Lambda_t^{-1} \phi_{lin}\Vert_{X}  \Vert \nabla_t \p_x  G_{lin} \Vert_{X}.
\end{align*}
Integrating in time yields
\begin{align*}
    \Vert \tilde m \phi\Vert^2_{X }(t) &\ge \Vert \phi_{in}\Vert^2_{ X }-\frac {2e^{2\pi} } {\vert k_{0}\vert }\Vert \p_x \Lambda_t^{-1} \phi_{lin}\Vert_{L^2 X }  \Vert \nabla_t  \p_x G_{lin} \Vert_{L^2 X }\\
    &\le \Vert \phi_{in}\Vert-\frac {8e^{2\pi} } {\vert k_{0}\vert }\Vert \phi_{in},  \p_x G_{in}\Vert^2_X\\
    &\ge \left(1-\frac {16e^{2\pi} } {\vert k_{0}\vert }\right) \Vert \phi_{in}\Vert^2_X,\\
    &\gtrsim \Vert \phi_{in}\Vert^2_X
\end{align*}
Therefore, we infer
\begin{align*}
    \Vert  m \phi\Vert^2_{X }(t) &\gtrsim   \Vert \phi_{in}\Vert^2_X.
\end{align*}
which proves the lower bound \eqref{eq:lowlin}.

Now we turn to \eqref{eq:lowupNL}, we want to use the bounds claimed in \eqref{bootstrap}, which we proved to hold in the previous section, to estimate the nonlinear contributions
\begin{align*}
    \frac 12 \p_t \left\Vert \begin{pmatrix}
        \phi-\phi_{lin} \\ \p_x (G-G_{lin})
    \end{pmatrix}  \right\Vert^2_X &= \left\langle \begin{pmatrix}
        \phi-\phi_{lin} \\ \p_x (G-G_{lin})
    \end{pmatrix} , L \begin{pmatrix}
        \phi-\phi_{lin} \\ \p_x (G-G_{lin})
    \end{pmatrix}  + NL[\phi, G ] \right\rangle.
\end{align*}
The linear estimates
\begin{align*}
    \left \langle \begin{pmatrix}
        \phi-\phi_{lin} \\ \p_x (G-G_{lin})
    \end{pmatrix} , L \begin{pmatrix}
        \phi-\phi_{lin} \\ \p_x (G-G_{lin})
    \end{pmatrix}  \right\rangle&\le -\frac 1 2 \Vert \p_x \Lambda_t^{-1} (\phi- \phi_{lin})\Vert_{X} - \frac 1 2\Vert \nabla_t \p_x (G-G_{lin}) \Vert_{X},
\end{align*}
are the same as the ones obtained for $\phi_{lin},G_{lin}$. For the nonlinear terms, we can easily handle regularity losses in account of the high regularity in which we proved \eqref{bootstrap}. We show only the bound for the nonlinear contribution of $\p_x \Delta^{-1}_t \p_y^t (( \nabla^\perp \phi  \cdot\nabla) \p_y^t  \phi )_{\neq}$, all the other nonlinear terms are simpler to control. We estimate
\begin{align*}
    \vert \langle \p_x (G&-G_{lin}) , \p_x \Delta^{-1}_t \p_y^t (( \nabla^\perp \phi  \cdot\nabla) \p_y^t  \phi )_{\neq} \rangle_X\vert  \\
    &\le  \sum_{\substack{k,l\\ k\neq 0 }} \iint \chi(k)\frac {\vert k\vert^2}{\vert k,\eta \vert^2} \frac {\vert (\eta l -k\xi)(\eta -kt)(\xi-lt)\vert }{(k-l)^2(1+ (\frac {\eta-\xi}{k-l} -t )^2 }\vert G-G_{lin}\vert (k,\eta)\vert \\
    &\hspace{2cm}\times \phi\vert (k-l,\eta-\xi)\vert \phi\vert (l,\xi)  d\xi d\eta .
\end{align*}
Using $\tfrac {\vert (\eta-kt)  (\xi-lt)\vert }{(k-l)^2(1+ (\frac {\eta}{k} -t )^2 }\le  \langle  l,\xi\rangle^2+ \langle  k-l,\eta-\xi\rangle^2 $ we obtain
\begin{align*}
   \vert  \langle \p_x (G- G_{lin}) &, \p_x \Delta^{-1}_t \p_y^t \cdot(( \nabla^\perp \phi  \cdot\nabla) \p_y^t  \phi )_{\neq} \rangle_X \vert  \lesssim \Vert A (G- G_{lin} ) \Vert_{L^2}\Vert A \phi\Vert_{L^2}^2.
\end{align*}
The other nonlinear terms are estimated in a similar manner, thus we obtain
\begin{align*}
    &\left\vert  \left\langle \begin{pmatrix}
        \phi- \phi_{lin} \\ \p_x (G-G_{lin})
    \end{pmatrix} ,  NL[\phi, G ] \right\rangle\right\vert \\
    &\quad \lesssim \left\Vert \begin{pmatrix}
        \phi- \phi_{lin} \\ \p_x (G-G_{lin})
    \end{pmatrix}\right\Vert_{L^2}  (\Vert AG \Vert_{L^2}^2 + \Vert A \phi\Vert_{L^2}^2+ \Vert \frac {A }{\langle \p_y \rangle }v_0^x \Vert_{L^2}^2)
\end{align*}
Therefore, after integrating in time we infer
\begin{align*}
    &\left\Vert \begin{pmatrix}
        \phi- \phi_{lin} \\ \p_x (G-G_{lin})
    \end{pmatrix}\right\Vert_{L^\infty L^2}^2 \\
    &\quad \le C t  \left\Vert \begin{pmatrix}
        \phi- \phi_{lin} \\ \p_x (G-G_{lin})
    \end{pmatrix}\right\Vert_{L^\infty L^2} (\Vert AG \Vert_{L^\infty  L^2}^2 + \Vert A \phi\Vert_{L^\infty L^2}^2+ \Vert \frac {A }{\langle \p_y \rangle }v_0^x \Vert_{L^\infty L^2}^2).
\end{align*}
Appealing to the bounds in \eqref{bootstrap}, we obtain
\begin{align*}
    \left\Vert \begin{pmatrix}
        \phi- \phi_{lin} \\ \p_x (G-G_{lin})
    \end{pmatrix}\right\Vert_{L^\infty L^2}&\le C t  \eps^2 \le C \delta \eps^{\frac 4 3 }.
\end{align*}
Thus, estimate \eqref{eq:lowupNL} holds and so we deduce Lemma \ref{lem:lowbd}.

\end{proof}

\subsection*{Acknowledgments}
N. Knobel and Ch. Zillinger have been funded by the Deutsche Forschungsgemeinschaft (DFG, German Research Foundation) – Project-ID 258734477 – SFB 1173.

N. Knobel is supported by ERC/EPSRC Horizon Europe Guarantee EP/X020886/1 while working at Imperial College London.

M. Dolce is supported by the Swiss State Secretariat for
Education, Research and Innovation (SERI) under contract number MB22.00034 through the
project TENSE and is a member of the GNAMPA-INdAM.

\appendix

\section{Fourier Weights}

\subsection{Proof of Lemma \ref{lem:fullm1}}

\begin{proof}[Proof of Lemma \ref{lem:fullm1}]
We start with the proof of i). The lower bound is clear since $m(0,k,\eta)=1$ and $\partial_t m\ge 0$. For the upper bound, we estimate
    \begin{align*}
        \frac {\partial_t m}m (t,k,\eta) &\le \sup\limits_{j\in \mathbb{Z}\setminus\{0\}} \left(\frac 1{1+(\tfrac \eta j -t )^2 }\frac 1{\langle k-j \rangle^3 }\right)\\
        &\le \sum_{j\in \mathbb{Z}\setminus\{0\}} \left(\frac 1{1+(\tfrac \eta j -t )^2 }\frac 1{\langle k-j \rangle^3 }\right).
    \end{align*}
    Therefore, since $m(0,k,\eta)=1$ it holds that
    \begin{align*}
        m(k,\eta, t) &\le\exp\left( \sum_{j\in \mathbb{Z}\setminus\{0\}} \int_0^t\frac 1{1+(\tfrac \eta j -\tau )^3 }\frac 1{\langle k-j \rangle^3 }  d\tau \right)\\
        &\le \exp\left( \pi \sum_{j\in \mathbb{Z}\setminus\{0\}}\frac 1{\langle k-j \rangle^2 }  d\tau \right)\le \exp\left( \pi^3/6  \right),
    \end{align*}
and thus i) is proved.

For the property ii), we show  that
\begin{align}
    \frac 1 {1+\vert t-\frac \eta k \vert}\le 2\frac {\vert k-l,\eta-\xi\vert}{1+\vert t-\frac {\xi} k \vert}.\label{eq:m1help}
\end{align}
Indeed, once the inequality above is proved the claim follows by the definition of $m$. When  $2\vert k-l,\eta- \xi\vert \ge 1+\vert t-\frac {\xi} k \vert$ then \eqref{eq:m1help} is clear. If
 $2\vert k-l,\eta-\xi\vert \le 1+\vert t-\frac {\xi} k \vert$ we obtain
\begin{align*}
    1+ \vert t-\tfrac {\xi} k \vert &\le  1+ \vert t-\tfrac {\eta} k\vert + \tfrac {\vert \eta-\xi\vert }k \\
    &\le 1+ \vert t-\tfrac {\eta} k\vert+\tfrac 1 2 (1+ \vert t-\tfrac {\xi} k \vert)
\end{align*}
and thus \eqref{eq:m1help} follows by rearranging the terms.

Turning our attention to iii), we see that the claim is clear for $\vert k\vert \ge \vert\eta \vert $. For  $\vert k\vert \le \vert\eta \vert$ and  $\langle t\rangle ^2\ge \vert k , \eta  \vert $, using ii) we get
\begin{align*}
    \frac { \tfrac {|\eta|} {k^2} } {1+(t-\frac \eta k )^2}&\leq \big(\frac{| \eta|} {|k|^3}\big)^{\frac 1 2 } \frac { \vert\tfrac \eta k-t\vert^{\frac 1 2} +t^{\frac 1 2 } }  {1+(t-\frac \eta k )^2}\\
    &\le \big(\frac {|\eta|} {|k|^3}\big)^{\frac 1 2 }  \frac 1 {1+\vert \frac \eta k -t \vert} \left(1+\sqrt { t}\sqrt{\tfrac {\p_t m }m  (t,l,\xi )}\right)\langle k-l,\eta-\xi\rangle^3.
\end{align*}
When $\langle t\rangle ^2\le \vert k,\eta  \vert $, we obtain
        \begin{align*}
        \frac {|\eta|}{k^2}        &\le 1+ \frac {\eta^2 }{k^2}\frac 1{\vert k,\eta\vert }\lesssim 1+ \frac 4{\vert k,\eta\vert }(t^2+ (t-\tfrac \eta k )^2)\lesssim  1+\frac 1 {\langle t \rangle^2 } (t-\tfrac \eta k )^2,
    \end{align*}
    whence proving the desired result.

    Finally, to prove property iv), we apply the mean value theorem for Lipschitz functions.
If we assume that
\begin{align}
    \left\vert\frac {\p_\eta m} m (t,k,\eta) \right\vert &\lesssim   \frac 1 {|k|}\label{eq:mkest}
\end{align}
holds in the Lipschitz sense we obtain that for some $\xi'\in(\eta,\xi)$
\begin{align*}
    \vert m(t,k,\eta ) - m(t,k,\xi)\vert &\le \vert \p_\eta m(t, k,\xi')\vert \vert \eta-\xi\vert  \lesssim  \frac {\vert \eta-\xi\vert}{|k|} .
\end{align*}
So we only need to prove \eqref{eq:mkest} and the $\p_\eta$ is in the Lipschitz sense. The weight $m$ is described by
\begin{align*}
    m(t,k,\eta) &= \begin{cases}
     \exp\left(\int_{\sqrt{\vert k,\eta\vert}}^t \sup\limits_{j\in \mathbb{Z}\setminus\{0\}} \left(\frac 1{1+(\frac {\eta} j -\tau )^2 }\frac 1{\langle k-j \rangle^3 }\right) d\tau\right)&\sqrt{\vert k,\eta\vert}<t\\
     1& \text{else}
     \end{cases}
\end{align*}
and thus we only need to consider $\sqrt{\vert k,\eta\vert}<t$. By the Leibnitz integral rule, we obtain
\begin{align*}
    \frac {\p_\eta m} m (t,k,\eta) &= \p_{\eta}  \int_{\sqrt{\vert k,\eta\vert}}^t \sup\limits_{j\in \mathbb{Z}\setminus\{0\}} \left(\frac 1{1+(\frac {\eta} j -\tau )^2 }\frac 1{\langle k-j \rangle^3 }\right) d\tau\\
    &=   \int_{\sqrt{\vert k,\eta\vert}}^t \p_{\eta} \sup\limits_{j\in \mathbb{Z}\setminus\{0\}} \left(\frac 1{1+(\frac {\eta} j -\tau )^2 }\frac 1{\langle k-j \rangle^3 }\right) d\tau \\
    &- \frac \eta {2\vert k,\eta\vert^{\frac 3 2 }}\sup\limits_{j\in \mathbb{Z}\setminus\{0\}} \left(\frac 1{1+(\frac {\eta} j -t )^2 }\frac 1{\langle k-j \rangle^3 }\right).
\end{align*}
Therefore, we infer
\begin{align*}
    \left\vert \frac {\p_\eta m} m (t,k,\eta)\right\vert  &\le   \int_{\sqrt{\vert k,\eta\vert}}^t\left\vert  \p_{\eta} \sup\limits_{j\in \mathbb{Z}\setminus\{0\}} \left(\frac 1{1+(\frac {\eta} j -\tau )^2 }\frac 1{\langle k-j \rangle^3 }\right)\right\vert  d\tau \\
    &\qquad + \frac 1 {2\vert k,\eta\vert^{\frac 1 2 }}\sup\limits_{j\in \mathbb{Z}\setminus\{0\}} \left(\frac 1{1+(\frac {\eta} j -t )^2 }\frac 1{\langle k-j \rangle^3 }\right)\\
    &=:M_1 +M_2 .
\end{align*}
First we estimate $M_1$, since $\frac 1{1+(\frac {\eta} j -\tau )^2 }\frac 1{\langle k-j \rangle^3 }$ is positive we infer
\begin{align*}
    \left\vert \p_{\eta}\sup\limits_{j\in \mathbb{Z}\setminus\{0\}} \left(\frac 1{1+(\frac {\eta} j -\tau )^2 }\frac 1{\langle k-j \rangle^3 }\right)\right\vert &\le \sup\limits_{j\in \mathbb{Z}\setminus\{0\}} \left\vert\p_{\eta}  \frac 1{1+(\frac {\eta} j -\tau )^2 }\frac 1{\langle k-j \rangle^3 }\right\vert\\
    &\le \sup\limits_{j\in \mathbb{Z}\setminus\{0\}} \left\vert \frac {\frac 2 j (\frac {\eta} j -\tau)}{(1+(\frac {\eta} j -\tau )^2)^2 }\frac 1{\langle k-j \rangle^3 }\right\vert\\
    &\le \frac 2 {|k|} \sum_j \frac {1}{(1+(\frac {\eta} j -\tau )^2)^2 }\frac 1{\langle k-j \rangle^2 }.
\end{align*}
Thus,
\begin{align*}
    \vert M_1\vert &\lesssim \frac 1 {|k|}\int \sum_j \frac {1}{(1+(\frac {\eta} j -\tau )^2)^2 }\frac 1{\langle k-j \rangle^2 }d\tau \lesssim \frac 1 {|k|}.
\end{align*}
For the term $M_2$, we get a bound consistent with \eqref{eq:mkest} if we show the following: for $\sqrt{\vert k,\eta\vert}<t$ and  $j \neq 0$ it holds the estimate
\begin{align}
   \frac 1 {\sqrt{\vert k,\eta\vert}}  \frac 1{1+(\frac {\eta} j -t )^2 }\frac 1{\langle k-j \rangle^3 }\lesssim   \frac 1 {|k|} .\label{eq:M2est}
\end{align}
For $\vert k-j\vert \ge \frac 1 {10} |k|$, $\frac {\eta} j\le \frac t 2 $ or $\frac {\eta} j\ge 2t $ the inequality \eqref{eq:M2est} is clear. For $\vert k-j\vert \le \frac 1 {10} |k|$ and $\frac t 2 \le \frac {\eta} j\le 2t $ we obtain $\frac {\eta}k \ge \frac 1 3t$. Hence, since $\sqrt{\vert k,\eta\vert}< t\le 3 \frac {\eta} k $, we know that $|\eta|\gtrsim k^2 $ which proves \eqref{eq:M2est}. Therefore, \eqref{eq:mkest} holds and  the bound  iv) is proved.

\end{proof}

\subsection{Proof on $q$ and $J$}

\begin{proof}[{Proof of Lemma \ref{lem:fullJ}}]
    The inequalities i)-v) are a consequence of the construction of $J$, $q$ and
    Lemma \ref{lem:fullq}. The inequality vi) is a simple adaption of Lemma 3.9
    in \cite{masmoudi2022stability}. The inequality vii) can be obtained by the
    same steps done in  \cite{NiklasMHD2024}. So it only remains to prove viii).

    By the definition of $J$ in \eqref{def:J}, we see that
    \begin{equation*}
        |J(t,k,\eta)-J(t,k,\xi)|\leq \frac{|e^{8\rho|\eta|^\frac13}-e^{8\rho|\xi|^\frac13}|}{q(t,k,\eta)}+\frac{e^{8\rho|\xi|^\frac13}}{q(t,k,\eta)q(t,k,\xi)}|q(t,k,\eta)-q(t,k,\xi)|.
    \end{equation*}
    For the first term, applying the mean value theorem and Lemma \ref{lem:fullJ} v) we deduce that
    \begin{equation*}
    \frac{|e^{8\rho|\eta|^\frac13}-e^{8\rho|\xi|^\frac13}|}{q(t,k,\eta)}\lesssim \frac{|\eta-\xi|}{\langle \eta\rangle^\frac23} e^{C|\eta-\xi|^\frac13}J(t,k,\xi).
    \end{equation*}
    For the second term, we only need to estimate $|q(t,k,\eta)-q(t,k,\xi)|$. We can consider $t<2\max(|\eta|,|\xi|)$ since otherwise $q(t,k,\eta)=q(t,k,\xi)=1$ and we are done. Moreover, since $t>\min(|\xi|^\frac23,|\eta|^\frac23)$ we only have to consider the case when $q=q_{NR}$ or $q=q_R$. We want to apply again the mean value theorem but we now observe that $\p_\eta q_{NR}$ has jump discontinuities and therefore we need to be careful.

    Once we know how to control $q_{NR}$ then the bounds on $q_R$ are straightforward. Indeed
    \begin{align*}
        \frac{\p_\eta q_R(t,\eta)}{q_R(t,\eta)}=\frac{\p_\eta((\tfrac{k^3}{2\eta}(1+a_{k,\eta}|t-\tfrac{\eta}{k}|))}{\tfrac{k^3}{2\eta}(1+a_{k,\eta}|t-\tfrac{\eta}{k}|)} +\frac{\p_\eta q_{NR}(t,\eta)}{q_{NR}(t,\eta)}.
    \end{align*}
Then, since
\begin{align*}
    \p_\eta((\tfrac{k^3}{2\eta}(1+a_{k,\eta}|t-\tfrac{\eta}{k}|))=-\frac{k^3}{2\eta^2}(1+a_{k,\eta}|t-\tfrac{\eta}{k}|)+\frac{k^6}{\eta^3}|t-\frac{\eta}{k}|+a_{k,\eta}\frac{k^2}{2\eta}\frac{\tfrac{\eta}{k}-t}{|\tfrac{\eta}{k}-t|},
\end{align*}
and $|k|\lesssim |\eta|^\frac13$ we deduce that
\begin{align}
\label{eq:petaqR}
       \big| \frac{\p_\eta q_R(t,\eta)}{q_R(t,\eta)}\big| \lesssim \frac{1}{|k|} +\big| \frac{\p_\eta q_{NR}(t,\eta)}{q_{NR}(t,\eta)}\big| .
    \end{align}
    Hence, we only have to estimate $\p_\eta q_{NR}$.\\
    Let $t\in \tilde I_{j,\eta }$ for some $j$. By definition it holds that
    \[
    q_{NR}(t_{j,\eta}^-,\eta)=\left(\frac{j^3}{2\eta}\right)^{\frac12+ 2\rho}q_{NR}(t_{k,\eta}^+,\eta)=\left(\frac{j^3}{2\eta}\right)^{\frac12+ 2\rho}q_{NR}(t_{j-1,\eta}^-,\eta).
    \]
    Iterating this identity we find that when $t\in I_{k,\eta}^R$ we get
    \begin{equation*}
        q_{NR}(t,\eta)= \left(\frac{j^3}{\eta}\right)^{\rho+\frac12 }(1+a_{j,\eta}|t-\tfrac{\eta}{j}|)^{\rho+\frac12}\prod_{l=1}^{j-1}\left(\frac{l^3}{\eta}\right)^\frac12.
    \end{equation*}
    Then,
    since $|j|\lesssim |\eta|^\frac13$, we observe that
    \begin{align*}
        |\p_\eta(\eta^{-\frac{j}{2}-\rho})|&=\big|\frac{j/2+\rho}{\eta}\big||\eta^{-\frac{j}{2}-\rho}|\lesssim \frac{1}{|\eta|^\frac23}|\eta^{-\frac{j}{2}-\rho}|,\\
        |\p_\eta( a_{j,\eta} )|&=2\frac {|j|^3} {|\eta|^3}\lesssim \frac 1 {|\eta|^2}.
    \end{align*}
    Therefore,
\begin{equation}
\label{bd:deetaqNR}
    \big|\frac{\p_\eta q_{NR}(t,\eta)}{q_{NR}(t,\eta)}\big|\lesssim \frac{1}{|\eta|^{\frac 2 3 }}+ \frac {1}{ |j|} \frac {1}{1+|t-\tfrac{\eta}{j}|},
\end{equation}
 we can argue analogously in the case $t\in I_{j,\eta}^L$ and conclude that the inequality \eqref{bd:deetaqNR} is true also in this case. In account of \eqref{eq:petaqR}, we thus obtain that for $\tfrac12\min(|\eta|^\frac23)\leq t\leq 2|\eta|$  one has
    \begin{equation}
\label{bd:deetaq}
    \big|\frac{\p_\eta q(t,k,\eta)}{q(t,k,\eta)}\big|\lesssim \frac 1 {\vert k\vert} +\frac{1}{|\eta|^{\frac 2 3 }}+ \frac {1}{1+|t-\frac{\eta}{j}|} .
\end{equation}
 Hence
\begin{align*}
    |q(t,k,\eta)-q(t,k,\xi)|&\leq |\eta-\xi||\p_\eta q(t,k,\xi')|\\
    &\lesssim \vert \eta-\xi\vert  \left(\frac{1}{|k|} +\frac{1}{|\eta|^{\frac 2 3 }}+ \frac {1}{ |j|} \frac 1 {1+|t-\frac{\xi'}{j}|}\right)q(t,k,\xi')
\end{align*}
for some $\xi'\in (\eta,\xi)$. For $\xi'\in  (\eta,\xi)$ we obtain by Lemma \ref{lem:fullq}
\begin{align*}
    \frac 1 {1+|t-\tfrac{\xi'}{j}\vert}&\le \langle  \eta -\xi \rangle  \frac 1 {1+\vert t-\frac \eta j\vert }\le\langle  \eta -\xi \rangle  \left( \sqrt{\tfrac {\p_t q }q }(\eta ) + \frac {|j|^3} {|\eta|} \right).
\end{align*}
Thus, using that $1\leq \vert j\vert \lesssim \vert \eta\vert^{\frac 1 3 }$
\begin{align*}
    |q(t,k,\eta)-q(t,k,\xi)|&\leq |\eta-\xi||\p_\eta q(t,k,\xi')|\\
    &\lesssim \vert\eta-\xi\vert \langle  \eta -\xi \rangle   \left(\frac{1}{|k|} +\frac{1}{|\eta|^{\frac 2 3 }}+ \frac {|j|^2}{|\eta|}+\sqrt{\tfrac {\p_t q }q }(\eta ) \right)q(t,k,\xi')\\
    &\lesssim \vert\eta-\xi\vert \langle  \eta -\xi \rangle  \left(\frac{1}{|k|} +\frac{1}{|\eta|^{\frac 1 3 }}+ \sqrt{\tfrac {\p_t q }q }(\eta ) \right)q(t,k,\xi').
\end{align*}
For the cases $t\in  I_{j,\eta }\setminus\tilde I_{j,\eta }$ we can argue in a similar way but the estimates are simpler since we do not have the term $(1+a_{j,\eta}\vert t-\tfrac \eta j \vert )$.

Therefore, thanks to Lemma \ref{lem:fullq} we conclude  that
\begin{equation}
   \frac{|q(t,k,\eta)-q(t,k,\xi)|}{q(t,k,\eta)}\lesssim \vert\eta-\xi\vert \langle  \eta -\xi \rangle \left(\frac{1}{|k|} +\frac{1}{|\eta|^{\frac 1 3 }}+ \sqrt{\tfrac {\p_t q }q }(\eta ) \right)e^{C|\eta-\xi|^\frac13}
\end{equation}
for some constant $C>0$. Putting all the estimates together we finally see that
\begin{align*}
    \frac{|J(t,k,\eta)-J(t,k,\xi)|}{J(t,k,\xi)}&\lesssim\vert\eta-\xi\vert \langle  \eta -\xi \rangle\big( \frac{1}{|k|} +\frac{1}{|\eta|^{\frac 1 3 }}+ \sqrt{\tfrac {\p_t q }q }(\eta )\big)e^{C|\eta-\xi|^\frac13}\\&\lesssim\vert\eta-\xi\vert \langle  \eta -\xi \rangle\big( \frac{1}{|k|^{\frac 1 3}} + \sqrt{\frac {\p_t q }q }(\eta )\big)e^{C|\eta-\xi|^\frac13}
\end{align*}
which is the desired result.
\end{proof}

\section{Fourier Multipliers and Gevrey Spaces}
\label{sec:appendix}

In this section, we summarize a few inequalities related to Gevrey spaces. In particular we use Lemma A.2 of \cite{Bedrossian15}:
\begin{lemma}\label{lem:useest}
    Let $0<s<1$ and $x\ge y\ge 0$
    \begin{itemize}
        \item[i)] If $x\neq 0$
        \begin{align}
            \vert x^s-y^s \vert &\lesssim \tfrac 1 {x^{1-s}+y^{1-s}}\vert x-y\vert
        \end{align}
         \item[ii)] If $\vert x-y \vert\le \tfrac x K $ and $K>1$
        \begin{align}
            \vert x^s-y^s \vert &\le \tfrac s {(K-1)^{1-s }}\vert x-y\vert^s
        \end{align}
        \item [iii)]In general,
        \begin{align}
            \vert x+y \vert^s &\le \left(\tfrac x{x+y} \right)^{1-s }(x^s+y^s)
        \end{align}
        in particular, if $y\le x\le K y $ for some $K$, then
        \begin{align}
            \vert x+y\vert^s \le \left(\tfrac K{1+K} \right)^{1-s }(x^s+y^s)
        \end{align}
    \end{itemize}

\end{lemma}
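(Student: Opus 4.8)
\textbf{Proof proposal for Lemma \ref{lem:useest}.}

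These are elementary one-variable inequalities for the concave function $t\mapsto t^s$ on $[0,\infty)$, so the plan is to exploit concavity and homogeneity throughout, reducing each part to a normalized form. For part (i), I would write $x^s - y^s = \int_y^x s\,r^{s-1}\,dr$ and use that $r^{s-1}$ is decreasing, so the integrand is bounded between $s x^{s-1}$ and $s y^{s-1}$; hence $x^s-y^s \le s\, y^{s-1}(x-y)$ and also $x^s-y^s \le s\, \big(\tfrac{x+y}{2}\big)^{s-1}(x-y)$ when $y=0$ needs care — better: bound $x^s-y^s\le s\,\max(x,y)^{s-1}\cdot\mathbf{1}+\dots$. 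The clean route is: since $x\ge y\ge 0$, we have $x^s - y^s \le x^s - y^s$ trivially, and $r^{s-1}\le (r')^{s-1}$ for $r\ge r'$, so $\int_y^x r^{s-1}dr \le$ (using $r\ge \tfrac{x+y}{2}$ is false on part of the interval)… Instead I would simply observe $x^{1-s}(x^s-y^s)\le x - y$ is false in general, so the correct normalization is: set $y = \theta x$ with $\theta\in[0,1]$; then $x^s-y^s = x^s(1-\theta^s)$ and $x-y = x(1-\theta)$, so the claimed inequality $x^s-y^s\lesssim \tfrac{x-y}{x^{1-s}+y^{1-s}}$ becomes $1-\theta^s \lesssim \tfrac{(1-\theta)}{1+\theta^{1-s}}$, i.e. $(1-\theta^s)(1+\theta^{1-s})\lesssim 1-\theta$; expanding the left side as $1-\theta^s+\theta^{1-s}-\theta$, it suffices to show $\theta^{1-s}-\theta^s \le 0$, which holds since $1-s\ge s$ is false in general — so one splits at $\theta$ vs $1$: for $\theta\le 1$, whichever of $s,1-s$ is larger, the dominant term is controlled, and a short case analysis (or the substitution $\theta\mapsto 1-u$ for $\theta$ near $1$, and direct bound for $\theta$ near $0$) closes it. I would present this as: by homogeneity reduce to $x=1$, then prove $\tfrac{1-y^s}{1-y}(1+y^{1-s})$ is bounded on $[0,1]$, using $\tfrac{1-y^s}{1-y}\le s\,y^{s-1}$ near $y=1$ (mean value theorem) and $\le 1$ combined with $y^{1-s}\le y^{s}$-type bounds near $y=0$; continuity on the compact interval handles the rest.

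For part (ii), with $|x-y|\le x/K$ and $K>1$, so $y\ge x(1-1/K) = x(K-1)/K$. By the mean value theorem $x^s - y^s = s\,\xi^{s-1}(x-y)$ for some $\xi\in[y,x]$, hence $x^s-y^s \le s\,y^{s-1}(x-y)$. Now write $y^{s-1}(x-y) = (x-y)^s\cdot y^{s-1}(x-y)^{1-s}$ and bound $y^{s-1}(x-y)^{1-s} = (y/(x-y))^{s-1} = ((x-y)/y)^{1-s} \le (x/(Ky))^{1-s}\le (1/(K-1))^{1-s}$ using $x/y \le K/(K-1)$, i.e. $x/(Ky)\le 1/(K-1)$. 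This gives exactly $x^s-y^s\le \tfrac{s}{(K-1)^{1-s}}|x-y|^s$. I expect this to be the most straightforward part.

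For part (iii), I would use subadditivity-type reasoning for concave functions. Write $\big(\tfrac{x}{x+y}\big)^{1-s}(x^s+y^s) = x^{1-s}(x+y)^{s-1}x^s + x^{1-s}(x+y)^{s-1}y^s = \tfrac{x}{x+y}\cdot(x+y)^s + x^{1-s}(x+y)^{s-1}y^s$; we want this $\ge (x+y)^s$, i.e. $x^{1-s}(x+y)^{s-1}y^s \ge \tfrac{y}{x+y}(x+y)^s = y(x+y)^{s-1}$, i.e. $x^{1-s}y^s \ge y$, i.e. $x^{1-s}\ge y^{1-s}$, i.e. $x\ge y$, which is the hypothesis. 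This proves the first inequality in (iii). The second (with $y\le x\le Ky$) is then immediate by monotonicity of $t\mapsto (t/(t+y))^{1-s}$ in $t$... actually one substitutes $x\le Ky$ directly: $\tfrac{x}{x+y}\le \tfrac{K}{K+1}$ since $t\mapsto t/(t+y)$ is increasing. So (iii) reduces to the elementary identity above plus this monotonicity observation.

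The main obstacle is part (i): unlike (ii) and (iii), which follow from one clean application of the mean value theorem and an algebraic identity respectively, part (i) requires verifying that a certain continuous function on $[0,1]$ is bounded, which—while elementary—needs a genuine (if short) case split near the two endpoints $y\approx x$ and $y\approx 0$. Everything else is a direct computation. Since the excerpt attributes these to Lemma A.2 of \cite{Bedrossian15}, in the write-up I would in fact just cite that reference and include the short computations for (ii) and (iii) as above for the reader's convenience, noting that (i) follows from concavity of $t^s$ together with the elementary bound $\min(x^{1-s},y^{1-s})\cdot(x^s-y^s)\le x-y$ valid for $x\ge y$.
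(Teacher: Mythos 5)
The paper does not actually prove this lemma: it is quoted verbatim as Lemma A.2 of \cite{Bedrossian15} and used as a black box, so your self-contained verification is a genuinely different (and more complete) route, and it is essentially correct. Parts (ii) and (iii) are clean: the mean value theorem plus $(x-y)/y\le 1/(K-1)$ gives (ii) exactly, and your algebraic identity reducing (iii) to $x^{1-s}\ge y^{1-s}$, followed by monotonicity of $t\mapsto t/(t+y)$, is fine. For (i), the argument you eventually settle on (homogeneity to reduce to $x=1$, then boundedness of $y\mapsto \frac{(1-y^s)(1+y^{1-s})}{1-y}$ on $[0,1]$) is valid, and in fact needs no case split at all: since $y^s\ge y$ on $[0,1]$ one has $\frac{1-y^s}{1-y}\le 1$ and $1+y^{1-s}\le 2$, so the function is bounded by $2$; equivalently, $(x^s-y^s)\,x^{1-s}\le x-y$ and $(x^s-y^s)\,y^{1-s}\le x-y$ both follow from $y^{1-s}\le x^{1-s}$, and summing gives $(x^s-y^s)(x^{1-s}+y^{1-s})\le 2(x-y)$. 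Two points to fix in the write-up: first, the closing aside that (i) ``follows from \dots $\min(x^{1-s},y^{1-s})(x^s-y^s)\le x-y$'' is not sufficient on its own --- with the minimum $y^{1-s}$ the bound degenerates as $y\to 0$ relative to $x^{1-s}+y^{1-s}\approx x^{1-s}$; you need the companion bound with $x^{1-s}$ (which holds by the same one-line computation) or simply the normalized argument above. Second, the paragraph on (i) as written contains several abandoned false starts (e.g.\ the incorrect claim-and-retraction about $\theta^{1-s}-\theta^s\le 0$); these should be deleted, since the final two-line argument stands on its own. Citing \cite{Bedrossian15} as the paper does, with the short computations for (ii)--(iii) and the one-line proof of (i) included for convenience, would be a perfectly good final form.
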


\bibliographystyle{alpha}

\bibliography{library}

\end{document}